\newtheorem{theorem}{Theorem}[section]
\newtheorem{corollary}[theorem]{Corollary}
\newtheorem{lemma}{Lemma}[section]
\newtheorem{proposition}{Proposition}[section]
\theoremstyle{definition}
\newtheorem{definition}{Definition}[section]
\theoremstyle{remark}
\newtheorem{remark}{Remark}[section]
\numberwithin{equation}{section}
\numberwithin{equation}{section}
\theoremstyle{remark}
\DeclareMathOperator{\grad}{\nabla}
\DeclareMathOperator{\supp}{supp}
\def\dif{\mathop{}\hphantom{\mskip-\thinmuskip}\mathrm{d}}
\let\daccent\d
\let\d\relax
\newcommand\d{\ifmmode\dif\else\expandafter\daccent\fi}
\title[Non-radiating elastic sources at corners]{Non-radiating elastic sources in inhomogeneous elastic media at corners with applications}
\author{Huaian Diao}
\address{School of Mathematics and Key Laboratory of Symbolic Computation and Knowledge Engineering of Ministry of Education, Jilin University, Changchun, China}
\email{diao@jlu.edu.cn, hadiao@gmail.com}
\author{Yueran Geng}
\address{School of Mathematics, Jilin University, Changchun 130012, China\vspace*{-2mm}}
\address{and\vspace*{-2mm}}
\address{Department of Mathematics, City University of Hong Kong, Kowloon, Hong Kong SAR, China}
\email{gengyr23@mails.jlu.edu.cn}
\author{Ruixiang Tang}
\address{School of Mathematics, Jilin University, Changchun 130012, China}
\email{tangrx23@mails.jlu.edu.cn}
\begin{document}
	\maketitle

	\begin{abstract}
		This paper is concerned with non-radiating elastic sources in inhomogeneous elastic media. We demonstrate that the value of non-radiating elastic sources must vanish at convex corners of their support, provided the sources exhibit H\"older continuous regularity near the corner. Additionally, their gradient must satisfy intricate algebraic relationships with the angles defining the underlying corners, assuming the sources have $C^{1,\alpha}$ regularity  with $\alpha\in (0,1)$ in the neighborhood of the corners. Our analysis employs complex geometrical optics (CGO) solutions as test functions within a partial differential system to conduct \textcolor{black}{asymptotic analysis} near the corners. These characterizations enable us to establish unique identifiability results for determining the position and shape of radiating elastic sources from a single far-field measurement, both locally and globally. The uniqueness of such identification is a longstanding challenge in inverse scattering with a rich history. Specifically, when the support of a radiating elastic source is a convex polygon and the source is H\"older continuous at the corners, we can simultaneously determine the source's shape and its values at the corners. Furthermore, when the source function exhibits $C^{1,\alpha}$ regularity in the neighborhood of a corner, the gradient at that corner can typically be determined. \textcolor{black}{Additionally, when the support includes a convex sectorial corner and the elastic source satisfies certain generic conditions, we demonstrate that such a source must radiate at any frequency.}

		\noindent{\bf Keywords:}~~non-radiating sources, inverse elastic source problems, a single far-field measurement, unique identifiability, \textcolor{black}{CGO solutions}

		\noindent{\bf 2020 Mathematics Subject Classification:}~~35Q74, 35R30, 74B05, 86A22
	\end{abstract}
%% ------------------------------------------------------------------

\section{Introduction} \label{section_introduction}

%----------------------------------------------------------------

\subsection{Mathematical setup} 

Let $ \lambda $ and $ \mu $ be the Lam\'e constants defined in $\mathbb R^2$ satisfying the strong convexity condition 
\begin{align} \label{ineq_introduction_strong_convexity_condition}
	\mu > 0, \quad \lambda + \mu > 0.
\end{align}
The constants $ \lambda $ and $ \mu $ are referred to as the compression modulus and the shear modulus respectively. The positive function $ \rho\left(\mathbf{x}\right)\in C^2\left(\mathbb{R}^2\right) $ represents the density of the inhomogeneous elastic medium such that $ \rho\left(\mathbf{x}\right)=1 $ after normization for $ \mathbf{x}\in\mathbb{R}^2 \backslash \overline{D_R} $, where  $D_R:=\left\{\mathbf{x} \in \mathbb{R}^2:\left| \mathbf{x} \right| < R\right\} $ for some $R\in \mathbb R_+$.  Let $ \mathbf{u}\left(\mathbf{x}\right)\in\mathbb{C}^2 $ describe the displacement vector field generated by the external force $ \mathbf{f}\left(\mathbf{x}\right)\in L^{\infty}\left(\mathbb{R}^2\right) $, where   $\mathbf{f}\left(\mathbf{x}\right)$  is assumed to possess a compact support $ \overline{\Omega} $ such that $ \Omega\subset D_R$. Here  $\Omega$ is a bounded Lipschitz domain with a connected complement. Then the time-harmonic Navier equation in an inhomogeneous medium is
\begin{align} \label{model_global_non-nested_R2}
	\mathcal{L} \mathbf{u}+\omega^2 \rho\left(\mathbf{x}\right) \mathbf{u}=\mathbf{f}\left(\mathbf{x}\right) \quad\text{in } \mathbb{R}^2,  
\end{align}
where $ \mathcal{L} := \mu \Delta + \left(\lambda+\mu\right) \nabla \nabla \cdot$ and  $ \omega > 0 $ denotes the angular frequency. 

With the help of the Helmholtz decomposition in $ \mathbb{R}^2\backslash \overline{D_R} $, the displacement vector field $ \mathbf{u} $ can be decomposed as
\begin{align*} 
	\mathbf{u}=\mathbf{u}_{\mathrm{p}}+\mathbf{u}_{\mathrm{s}},
\end{align*}
where $ \mathbf{u}_{\mathrm{p}} $ and $ \mathbf{u}_{\mathrm{s}} $ are called the compressional and shear part of the displacement $ \mathbf{u} $ respectively. It follows that
\begin{align*}
	\mathbf{u}_{\mathrm{p}} = - \frac{1}{k_{\mathrm{p}}^2} \nabla \nabla \cdot \mathbf{u}, \quad \mathbf{u}_{\mathrm{s}} = \frac{1}{k_{\mathrm{s}}^2} \textbf{curl} \operatorname{curl} \mathbf{u} ,
\end{align*}
where $ k_{\mathrm{p}} $ and $ k_{\mathrm{s}} $ are the compressional and shear wave numbers of the homogeneous and isotropic background medium respectively, which are given by 
\begin{align} \label{eq_introduction_kpks} 
	k_{\mathrm{p}} = \frac{\omega}{\sqrt{\lambda + 2\mu}}, \quad k_{\mathrm{s}} = \frac{\omega}{\sqrt{\mu}}.
\end{align}
The notations $ \operatorname{curl} $ and $ \textbf{curl} $ represent 
\begin{align*}
	\operatorname{curl}\ \mathbf{u} = \frac{\partial u_2}{\partial x_1} - \frac{\partial u_1}{\partial x_2}, \quad \textbf{curl}\ u = \left( \frac{\partial u}{\partial x_2}, -\frac{\partial u}{\partial x_1} \right)^{\top}, 
\end{align*}
for $ \mathbf{u} = \left(u_1, u_2\right)^{\top}, \mathbf{x} = \left(x_1, x_2\right)^{\top} $, and the superscript $ \top $ denotes the transpose. 

To guarantee that the direct scattering problem is well-posed, the Kupradze radiation condition is posed as follows
\begin{align} \label{Kupradze_radiation_condition}
	\lim\limits_{r\to \infty} r^{\frac{1}{2}} \left(\frac{\partial\mathbf{u}_{\alpha}}{\partial r} - \mathrm{i} k_\alpha \mathbf{u}_{\alpha}\right)=0, \quad r:=\vert \mathbf{x}\vert, \ \alpha=\mathrm{p},\mathrm{s},
\end{align}
uniformly in all directions $ \hat{\mathbf{x}} :=\mathbf{x}/\left|\mathbf{x}\right| \in \mathbb{S}^1 := \left\{\mathbf{x}= \left(x_1, x_2\right)^{\top} \in \mathbb{R}^2 : x_1^2 + x_2^2 = 1 \right\} $, where $ \mathrm{i} := \sqrt{-1} $ and $ \partial/\partial r $ is the derivative along the radial direction from the origin. Moreover, this radiation condition implies that, as $ r = \left| \mathbf{x} \right| \to \infty $, $ \mathbf{u} $ has the following asymptotic expansion
\begin{align*} 
	\mathbf{u}\left(\mathbf{x}\right) = \frac{e^{\mathrm{i}k_{\mathrm{p}}r}}{r^{\frac{1}{2}}} \mathbf{u}_{\mathrm{p}}^{\infty} \left(\hat{\mathbf{x}}\right) + \frac{e^{\mathrm{i}k_{\mathrm{s}}r}}{r^{\frac{1}{2}}} \mathbf{u}_{\mathrm{s}}^{\infty} \left(\hat{\mathbf{x}}\right) + \mathcal{O} \left(r^{-\frac{3}{2}}\right) ,
\end{align*}
where $ \mathbf{u}_{\mathrm{p}}^{\infty} $ and $ \mathbf{u}_{\mathrm{s}}^{\infty} $ are denoted as the far field patterns of $ \mathbf{u}_{\mathrm{p}} $ and $ \mathbf{u}_{\mathrm{s}} $, respectively. We define the far field pattern $ \mathbf{u}^{\infty} $ of the scattered field $ \mathbf{u} $ as
\begin{align} \label{def_introduction_uinfty}
	\mathbf{u}^{\infty}\left(\hat{\mathbf{x}}\right) := \mathbf{u}_{\mathrm{p}}^{\infty}\left(\hat{\mathbf{x}}\right) + \mathbf{u}_{\mathrm{s}}^{\infty}\left(\hat{\mathbf{x}}\right), \quad \hat{\mathbf{x}} \in \mathbb{S}^1 .
\end{align}
Then the following relations hold
\begin{align*}
	\mathbf{u}_{\mathrm{p}}^{\infty}\left(\hat{\mathbf{x}}\right) = \left(\mathbf{u}^{\infty}\left(\hat{\mathbf{x}}\right)\cdot\hat{\mathbf{x}}\right)\hat{\mathbf{x}}, \quad \mathbf{u}_{\mathrm{s}}^{\infty}\left(\hat{\mathbf{x}}\right) = \left(\hat{\mathbf{x}}^{\perp}\cdot\mathbf{u}^{\infty}\left(\hat{\mathbf{x}}\right)\right)\hat{\mathbf{x}}^{\perp}.
\end{align*}
Moreover, $ \mathbf{u}\left(\mathbf{x}\right) $ and $ \mathbf{u}^{\infty}\left(\hat{\mathbf{x}}\right) $ \textcolor{black}{are in one-to-one correspondence} by the Rellich theorem \cite{hahner1998acoustic}. 

In this work, we are concerned with the inverse source problem formulated as 
\begin{align} \label{problem_introduction_u_fnablafOmega}
	\mathcal{F}\left(\Omega; \mathbf{f}\right) = \mathbf{u}^{\infty} \left(\hat{\mathbf{x}};\omega\right),
\end{align}
where $ \mathcal{F} $ is implicitly defined by the scattering system \eqref{model_global_non-nested_R2} and \eqref{Kupradze_radiation_condition}. Our focus is on recovering the shape and position of the support $ \Omega $ of  $ \mathbf{f} $ by a single far-field measurement. Specifically, if a far-field pattern $ \mathbf{u}^{\infty} \left(\hat{\mathbf{x}};\omega\right) $ is collected for all $ \hat{\mathbf{x}}\in\mathbb{S}^1 $ and a single fixed angular frequency $ \omega $, it is referred to as a single far-field measurement for the inverse problem \eqref{problem_introduction_u_fnablafOmega}. If measurements are taken over multiple frequencies, the problem is referred to as involving multiple measurements.  

For a given frequency  $ \omega $, the source term $ \mathbf{f} $ in \eqref{model_global_non-nested_R2} is said to be non-radiating if the  corresponding far-field pattern $ \mathbf{u}^{\infty}\left(\hat{\mathbf{x}}\right) $ vanishes on $\mathbb S^1$. This implies that $\mathbf u \equiv \mathbf 0$ in $\mathbb R^2  \setminus \overline \Omega$ by the Rellich theorem \cite{hahner1998acoustic}.  In other words, when $\mathbf f$  is non-radiating, it is invisible to far-field measurements and cannot be detected. Conversely, if $\mathbf f$ is radiating, the far-field measurement is non-zero, implying that $\mathbf f$ is observable. In this case, it is possible to recover the geometric information of the support  $\Omega$ and the physical configuration of $\mathbf f$ by far-field measurements. 

Throughout this paper, we assume that $ \omega\in \mathbb R_+ $ is an arbitrarily fixed angular frequency. The well-posedness of the direct problem \eqref{model_global_non-nested_R2} and \eqref{Kupradze_radiation_condition} can be readily obtained by Dirichlet-to-Neumann (DtN) map and variational approach. The main result that we aim to establish is the characterization of the non-radiating sources in inhomogeneous elastic media at corners. The detailed characterization is provided in Theorem \ref{thm_inverse_problem_non-radiating}. When the elastic source $\mathbf f$ is radiating, the non-radiating source characterization at corners (as established in Theorem \ref{thm_inverse_problem_non-radiating}) allows us to uniquely determine the shape and position of the support  $\Omega$ of $\mathbf f$ from a single far-field measurement. The support $\Omega$ can be a convex polygon. Once the shape and position of $\Omega$ is uniquely determined, we further show that the radiating source $ \mathbf{f} $  at the corner points can also be uniquely recovered by a single far-field measurement. Moreover,  the gradient $ \nabla\mathbf{f} $ of the radiating source $ \mathbf{f} $ at the corners can fulfill certain algebraic relationships with the angles of the underlying corners. These results are detailed in Theorem \ref{thm_inverse_problem_global_uniqueness}.

\subsection{Connection to previous results and main findings}

\textcolor{black}{The Navier equation \eqref{model_global_non-nested_R2}, as presented in this paper, is formulated in two dimensions. Its application is driven by the need to model elastic deformations in thin or planar structures, such as plates or cross-sections, offering a simplified alternative to three-dimensional elasticity. This approach enhances computational efficiency while preserving essential physical characteristics, including stress and strain interactions \cite{reddy2006theory, virieux2009overview}.} The inverse source problem is a fundamental topic in inverse scattering theory with a wide range of applications, including medical imaging \cite{hashmi2020optical}, photoacoustic tomography \cite{arridge1999optical}, and seismic monitoring \cite{alves2014identification, ammari2013time, song2011full}. 

It is well-established that the general source function cannot be uniquely determined from a single far-field measurement, as this inverse problem is underdetermined. For instance, consider the source function $ \mathbf{f}_0 := \left(\mathcal{L} + \omega^2 \rho(\mathbf{x})\right)\mathbf{g} $, where $ \mathbf{g} \in C^{\infty}_0 \left(\mathbb{R}^2\right) $. It follows that $ \mathbf{f} $ and $ \mathbf{f} + \mathbf{f}_0 $ generate identical far-field measurements. The inverse problem associated with a single far-field measurement, as given by \eqref{problem_introduction_u_fnablafOmega}, is essentially a manifestation of Schiffer's problem \cite{bleistein1977nonuniqueness, colton2018looking, colton2019inverse}, which concerns the unique identification of the position and shape of a scatterer from a single far-field measurement. Significant progress has been made in the context of Schiffer’s problem for inverse acoustic, elastic, and electromagnetic scattering by introducing a priori information about the scatterer, such as constraints on its size or geometry \cite{colton2018looking, colton2019inverse}, among others.

Extensive research has focused on characterizing non-radiating sources and establishing uniqueness results for the inverse source problem. In \cite{blaasten2018nonradiating}, it is shown that if the support of a source contains a non-degenerate concave or convex corner in $\mathbb{R}^2$, or an edge corner in $\mathbb{R}^3$, a non-radiating acoustic source in a homogeneous medium must vanish at these corner points, provided the source is H\"older continuous in a neighborhood of each corner. In \cite{hu2020inverse}, for an acoustic source in an inhomogeneous background medium, it is demonstrated that if the support of the source is a convex polygon, then both the support and the zeroth and first-order derivatives of the source function at corner points can be uniquely determined from a single far-field measurement. Similar results for non-radiating electromagnetic sources are found in \cite{blaasten2021electromagnetic, li2024nonradiating}. The authors in \cite{DFLW2025} characterize inclusions with corners and semilinear terms by minimal measurements. Furthermore, effective electromagnetic scattering problems involving embedded obstacles is examined in \cite{DLMW20251}. Corresponding developments in the study of scattering phenomena for medium scattering problems can be found in \cite{blaasten2021scattering,blaasten2014corners,cakoni2023regularity,cakoni2023singularities,cakoni2020corner,DFLY24,DTLT24, DLMW20252,elschner2018acoustic,kow2024scattering,paivarinta2017strictly,salo2021free}. 

In the case of elastic source scattering in a homogeneous elastic medium, \cite{blaasten2018radiating} shows that if a non-radiating external force has a non-degenerate concave or convex corner in $\mathbb{R}^2$, or an edge corner in $\mathbb{R}^3$, then this force must vanish at the corner. This result is used to determine the convex polyhedral shape of the source as well as the source value at the corner, all from a single far-field measurement. In \cite{kow2021characterization}, the authors characterize non-radiating volume and surface sources for elastic waves in anisotropic inhomogeneous media in $\mathbb{R}^3$ and show that the radiating elastic source can be uniquely determined from an explicit formula involving near-field measurements. The non-radiating volume elastic source is orthogonal to the completion of the solution set for the Navier equation in an anisotropic inhomogeneous medium, without external volume sources. Notably, the anisotropic inhomogeneous medium $D$ considered in \cite{kow2021characterization} is characterized solely by the elasticity tensor $\mathbf{C}$, which is compactly supported by $D$, differing from a homogeneous elastic medium. However, the density of $D$ does not exhibit a jump singularity, unlike the counterpart in a homogeneous elastic background medium.

In \cite{zhai2023determination}, the authors demonstrate uniqueness and a Lipschitz-type stability estimate for elastic and electromagnetic waves. This result holds under the assumption that the source function is piecewise constant in a homogeneous medium characterized by constant Lam\'e parameters and density. Uniqueness is established from a single boundary measurement at a fixed frequency, assuming the support of the source is a union of disjoint convex polyhedral subdomains. Additionally, there are recent results concerning increasing stability for source functions \cite{bao2020stability, yuan2023increasing}.

In this paper, we characterize the non-radiating sources located at corners within the support of underlying sources and establish the unique identifiability for the inverse source problem \eqref{problem_introduction_u_fnablafOmega} in an inhomogeneous elastic background medium. The first main result is as follows: if the support of the non-radiating source $\mathbf{f}$ contains convex corners and if $\mathbf{f}$ is $C^{\alpha}$-smooth, with $\alpha \in (0,1]$, in a neighborhood around each convex corner, then $\mathbf{f}$ vanishes at these corner points. Moreover, if $\mathbf{f}$ is $C^{1,\alpha}$-smooth for some $\alpha \in (0,1)$, the gradient $\nabla \mathbf{f}$ satisfies intricate algebraic equations that are associated with the angles of the corners.

Furthermore, when the radiating source $\mathbf{f}$ is $C^{\alpha}$-smooth with $\alpha \in (0,1]$ and the support $\Omega$ of the elastic source $\mathbf{f}$ is a convex polygon, we show that the shape and location of the support of $\mathbf{f}$ can be uniquely determined by a single far-field measurement. Simultaneously, the value of the source term $\mathbf{f}$ at the corner points of its support can be characterized. Further, when $\mathbf{f}$ is $C^{1,\alpha}$-smooth for some $\alpha \in (0,1)$, the gradient of the source function $\mathbf{f}$ can also be generically determined at the same time. Notably, all of these results are derived under the assumption that the density $\rho(\mathbf{x}) \in C^2(\mathbb{R}^2)$, which characterizes the inhomogeneous medium, is a smooth function, while the background medium is homogeneous.

The characterizations of non-radiating sources at corners presented in Theorem \ref{thm_inverse_problem_non-radiating} are local in nature. Specifically, we focus the analysis on the behavior of non-radiating sources near the corners, while a global characterization of these sources is described in terms of orthogonality relations (cf. \cite{kow2021characterization}). Furthermore, the inhomogeneity discussed in \cite{kow2021characterization} depends solely on the elasticity tensor, and the density of the inhomogeneous medium is assumed to be constant, identical to that of the homogeneous background. In contrast, this paper considers an inhomogeneous medium in which the support of the elastic source has a variable density, which differs from the background.

Compared to the previous study \cite{blaasten2018radiating}, which focuses on non-radiating elastic sources at corners under the assumption that the support of the source is in a homogeneous medium with constant density, we investigated a more general setting. In particular, we consider a variable density in the inhomogeneous medium and establish a more subtle and novel algebraic relationship between the gradient of the source and the angle defining the corner. As shown in Theorem \ref{thm_inverse_problem_non-radiating}, these algebraic characterizations of the source's gradient can be used to detect radiating sources. A radiating source $\mathbf{f}$ containing a corner radiates at all angular frequencies more easily when $\mathbf{f}$ is $C^{1,\alpha}$ continuous around the corner, as demonstrated in Corollary \ref{cor_inverse_problem_radiating}.   Regarding previous studies on uniqueness and stability analysis, such as those in \cite{blaasten2018radiating, zhai2023determination}, which assume that the background medium containing the source's support is piecewise homogeneous, our uniqueness results hold in a more general framework. Specifically, we assume that the density of the inhomogeneous medium is a general viable function, which introduces additional complexity. In this paper, we construct suitable complex geometric optics (CGO) solutions to investigate non-radiating elastic sources in inhomogeneous media with a variable density. We restrict our analysis to the microlocal behavior of the non-radiating elastic source around the corners. Since we consider both the function value and the gradient of the non-radiating elastic source at the corner points, this requires a detailed and intricate investigation of the source's behavior using CGO solutions as test functions in phase space. We utilize the characterizations of non-radiating elastic source in an inhomogeneous medium  to investigate the unique identifiability for inverse source problem \eqref{problem_introduction_u_fnablafOmega}  by a single far-field measurement.

In the following, we shall display our main results in this paper. We first introduce some geometric notations for a planar corner. Let $ \left(r, \theta\right) $ be the polar coordinates in $ \mathbb{R}^2 $. For $ \mathbf{x} = \left(x_1, x_2\right)^\top \in \mathbb{R}^2 $, the polar coordinates of $ \mathbf{x} $ are expressed as $ \left( r\cos\theta, \ r\sin\theta \right)^\top $. Given $ \mathbf{x}_0=\left(x_{01},x_{02}\right)^{\top}\in \mathbb{R}^2 $, we denote the convex sector $ W_{\mathbf{x}_0} $ and its boundaries $ \Gamma^{\pm}_{\mathbf{x}_0} $ as  
\begin{align*} 
	& W_{\mathbf{x}_0} := \left\{ \mathbf{x} \in \mathbb{R}^{2}:\mathbf{x} \neq \mathbf{x}_0, \theta_{m} < \arg \left( \left(x_1-x_{01}\right)+\text{i}\left(x_2-x_{02}\right) \right) < \theta_{M} \right\}, \notag \\
	& \Gamma^+_{\mathbf{x}_0} := \left\{ \mathbf{x} \in \mathbb{R}^{2} :\mathbf{x} \neq \mathbf{x}_0, \arg \left( \left(x_1-x_{01}\right)+\text{i}\left(x_2-x_{02}\right) \right) = \theta_{M} \right\}, \notag\\ 
	& \Gamma^-_{\mathbf{x}_0} := \left\{ \mathbf{x} \in \mathbb{R}^{2} :\mathbf{x} \neq \mathbf{x}_0, \arg \left( \left(x_1-x_{01}\right)+\text{i}\left(x_2-x_{02}\right) \right) = \theta_{m} \right\}, 
\end{align*}
where $ \theta_M - \theta_m \in\left(0,\pi\right) $ is the angle of $ W_{\mathbf{x}_0} $. Let 
\begin{align} \label{def_introduction_Dx0h}
	D_{\mathbf{x}_0,h}:=\left\{\mathbf{x} \in \mathbb{R}^2:\left| \mathbf{x}-\mathbf{x}_0 \right| < h\right\}
\end{align}
be the disk centered at $ \mathbf{x}_0 $ with radius $ h \in \mathbb{R}_+ $. We define the sector $ S_{\mathbf{x}_0,h} $ together with its boundaries $ \Gamma_{\mathbf{x}_0,h}^\pm $ and $ \Lambda_{\mathbf{x}_0,h} $ by
\begin{align} \label{def_inverse_problem_local_sector_Sh_Gammah_Lambdah}
	S_{\mathbf{x}_0,h} = W_{\mathbf{x}_0} \cap D_{\mathbf{x}_0,h}, \quad \Gamma_{\mathbf{x}_0,h}^\pm = \Gamma^\pm_{\mathbf{x}_0} \cap D_{\mathbf{x}_0,h}, \quad \Lambda_{\mathbf{x}_0,h} = W_{\mathbf{x}_0} \cap \partial D_{\mathbf{x}_0,h} .
\end{align}  
Particularly, when $ \mathbf{x}_0 $ is the origin, we abbreviate $ S_{\mathbf{x}_0,h} $, $ \Gamma_{\mathbf{x}_0,h}^\pm $, $ \Lambda_{\mathbf{x}_0,h} $, and $ D_{\mathbf{x}_0,h} $ as $ S_h $, $ \Gamma_h^\pm $, $ \Lambda_h $, and $ D_h $ respectively.

Now we are in a position to present the characterization of the non-radiating source at the corner points. It shows that the source function must vanish at these points and its gradient must satisfy intricate equations with respect to the angles of the corners. 

\begin{theorem} \label{thm_inverse_problem_non-radiating}
	Consider the elastic source scattering problem \eqref{model_global_non-nested_R2} and \eqref{Kupradze_radiation_condition} associated with \eqref{ineq_introduction_strong_convexity_condition} and $ \omega > 0 $. Assume that the positive density function $ \rho\left(\mathbf{x}\right)\in C^2\left(\mathbb{R}^2\right) $ such that $ \rho\left(\mathbf{x}\right)=1 $ for $ \mathbf{x}\in\mathbb{R}^2 \backslash \overline{D_R} $. Suppose that $ \mathbf{f}\left(\mathbf{x}\right)\in L^{\infty}\left(\mathbb{R}^2\right) $ and $ \supp\left(\mathbf{f}\right)=\overline{\Omega} $. Here, $ \Omega\subset D_R $ is a bounded Lipschitz domain with a connected complement. Let $ \mathbf{x}_0\in\partial\Omega $ such that $ \Omega\cap D_{\mathbf{x}_0, h}=S_{\mathbf{x}_0, h} $, where $ D_{\mathbf{x}_0, h} $ and $ S_{\mathbf{x}_0, h} $ are defined by \eqref{def_introduction_Dx0h} and \eqref{def_inverse_problem_local_sector_Sh_Gammah_Lambdah} with $ h\in\mathbb{R}_+ $. If $ \mathbf{f}\left(\mathbf{x}\right) \in C^{\alpha} \left(\overline{S_{\mathbf{x}_0, h}}\right) $ is a non-radiating source for some $ \alpha \in \left(0,1\right) $, i.e., $ \mathbf{u}^{\infty}\left(\hat{\mathbf{x}}\right)=\bm{0} $ for all $ \hat{\mathbf{x}} \in \mathbb{S}^1 $, then it holds that 
	\begin{align} \label{eq_thm_1_f=0}
		\mathbf{f} \left(\mathbf{x}_0\right) = \bm{0}.
	\end{align}
	If the non-radiating source $ \mathbf{f}\left(\mathbf{x}\right) \in C^{1,\alpha} \left(\overline{S_{\mathbf{x}_0, h}}\right) $ for some $ \alpha \in \left(0,1\right) $, then one has \eqref{eq_thm_1_f=0}, and there is an algebraic relationship between the gradient of $\mathbf f$ at the corner and the angle of the corner formulated as follows
	\begin{align} \label{eq_thm_1_gradientf}
		\nabla\mathbf{f}\left(\mathbf{x}_0\right)
		\begin{pmatrix}
			A \\ 
			B
		\end{pmatrix}
		+
		\begin{pmatrix}
			0 & -1 \\
			1 & 0
		\end{pmatrix}
		\nabla\mathbf{f}\left(\mathbf{x}_0\right)
		\begin{pmatrix}
			C \\ 
			D
		\end{pmatrix}
		=\mathbf{0},
	\end{align}
	where
	\begin{align*}
		\nabla\mathbf{f}\left(\mathbf{x}_0\right) = 
		\begin{pmatrix}
			\frac{\partial f_1}{\partial x_1}\left(\mathbf{x}_0\right) & \frac{\partial f_1}{\partial x_2}\left(\mathbf{x}_0\right) \\
			\frac{\partial f_2}{\partial x_1}\left(\mathbf{x}_0\right) & \frac{\partial f_2}{\partial x_2}\left(\mathbf{x}_0\right)
		\end{pmatrix},
	\end{align*}
	and 
	\begin{align} \label{def_thm_2_ABCD}
		\begin{cases}
			A=&\sin\left(\theta_M+\theta_m\right)\sin\left(\theta_M-\theta_m\right)\left(1+2\cos\left(\theta_M+\theta_m\right)\cos\left(\theta_M-\theta_m\right)\right), \\
			B=&\sin\left(\theta_M-\theta_m\right)\left(-2\cos\left(\theta_M-\theta_m\right)\cos^2\left(\theta_M+\theta_m\right)+\cos\left(\theta_M-\theta_m\right)\right. \\
			&\left.+\cos\left(\theta_M+\theta_m\right)\right), \\
			C=&\sin\left(\theta_M-\theta_m\right)\left(2\cos\left(\theta_M-\theta_m\right)\cos^2\left(\theta_M+\theta_m\right)-\cos\left(\theta_M-\theta_m\right)\right. \\
			&\left.+\cos\left(\theta_M+\theta_m\right)\right), \\
			D=&\sin\left(\theta_M+\theta_m\right)\sin\left(\theta_M-\theta_m\right)\left(-1+2\cos\left(\theta_M+\theta_m\right)\cos\left(\theta_M-\theta_m\right)\right).
		\end{cases}
	\end{align}
\end{theorem}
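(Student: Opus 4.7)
The plan is to localize the non-radiating condition to a small neighborhood of the corner and then test the source against a carefully chosen family of CGO solutions of the Navier system. First, since $\mathbf{u}^{\infty}\equiv \mathbf{0}$, the Rellich theorem gives $\mathbf{u}\equiv \mathbf{0}$ in $\mathbb{R}^{2}\setminus \overline{D_R}$, and unique continuation for the strongly elliptic operator $\mathcal{L}+\omega^{2}\rho$ in the source-free region $\mathbb{R}^{2}\setminus\overline{\Omega}$ (which is connected by hypothesis) propagates this vanishing throughout the exterior of $\Omega$, in particular to $D_{\mathbf{x}_{0},h}\setminus\overline{S_{\mathbf{x}_{0},h}}$. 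Since $\mathbf{f}\in L^{\infty}$ and $\rho\in C^{2}$, interior regularity places $\mathbf{u}$ in $H^{2}_{\mathrm{loc}}$, so matching $\mathbf{u}\equiv\mathbf{0}$ on the outer side of $\Gamma^{\pm}_{\mathbf{x}_{0},h}$ with the trace from inside $S_{\mathbf{x}_{0},h}$ forces both the Dirichlet trace of $\mathbf{u}$ and the traction $T\mathbf{u}=\lambda(\nabla\cdot\mathbf{u})\boldsymbol{\nu}+\mu(\nabla\mathbf{u}+\nabla\mathbf{u}^{\top})\boldsymbol{\nu}$ to vanish on $\Gamma^{\pm}_{\mathbf{x}_{0},h}$.

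Next I would construct vector-valued CGO test functions of the form
\[
\mathbf{v}_{\tau}(\mathbf{x})=e^{-\tau\,\mathbf{d}\cdot(\mathbf{x}-\mathbf{x}_{0})}\,\mathbf{p}+\tau^{-1}\mathbf{r}_{\tau}(\mathbf{x}),
\]
with $\tau>0$ large, a complex direction $\mathbf{d}\in\mathbb{C}^{2}$ chosen so that $\mathbf{d}\cdot\mathbf{d}=0$ and $\Re(\mathbf{d}\cdot(\mathbf{x}-\mathbf{x}_{0}))>0$ throughout $S_{\mathbf{x}_{0},h}$ except at the vertex, and a polarization $\mathbf{p}$. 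Two families are required: a shear-type CGO with $\mathbf{d}\cdot\mathbf{p}=0$, for which the principal symbol of $\mathcal{L}$ annihilates the leading exponential, and a compressional-type CGO with $\mathbf{p}$ parallel to $\mathbf{d}$. The variable-density zero-order contribution $\omega^{2}\rho\,\mathbf{v}_{\tau}$ is absorbed into the correction $\mathbf{r}_{\tau}$ via standard Carleman/Bukhgeim estimates for the shifted Navier operator, so that $(\mathcal{L}+\omega^{2}\rho)\mathbf{v}_{\tau}=0$ in a neighborhood of $\mathbf{x}_{0}$ with $\|\mathbf{r}_{\tau}\|_{L^{2}(S_{\mathbf{x}_{0},h})}$ uniformly bounded in $\tau$.

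Applying Betti's second identity to $\mathbf{u}$ and $\mathbf{v}_{\tau}$ on $S_{\mathbf{x}_{0},h}$, using the vanishing Cauchy data on $\Gamma^{\pm}_{\mathbf{x}_{0},h}$, and exploiting the symmetry of $\omega^{2}\rho$, I obtain
\[
\int_{S_{\mathbf{x}_{0},h}}\mathbf{f}(\mathbf{x})\cdot\mathbf{v}_{\tau}(\mathbf{x})\,\d\mathbf{x}=\int_{\Lambda_{\mathbf{x}_{0},h}}\bigl[T\mathbf{u}\cdot\mathbf{v}_{\tau}-\mathbf{u}\cdot T\mathbf{v}_{\tau}\bigr]\,\d s.
\]
The right-hand side is $O(e^{-c\tau})$ because $\mathbf{v}_{\tau}$ and $T\mathbf{v}_{\tau}$ are exponentially small on the arc $\Lambda_{\mathbf{x}_{0},h}$. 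On the left I substitute the Taylor expansion $\mathbf{f}(\mathbf{x})=\mathbf{f}(\mathbf{x}_{0})+\nabla\mathbf{f}(\mathbf{x}_{0})(\mathbf{x}-\mathbf{x}_{0})+O(|\mathbf{x}-\mathbf{x}_{0}|^{1+\alpha})$ and evaluate the resulting sector integrals $\int_{S_{\mathbf{x}_{0},h}}(\mathbf{x}-\mathbf{x}_{0})^{\beta}e^{-\tau\mathbf{d}\cdot(\mathbf{x}-\mathbf{x}_{0})}\,\d\mathbf{x}$; after passing to polar coordinates at $\mathbf{x}_{0}$ these become explicit Laplace-type integrals scaling as $\tau^{-2-|\beta|}$, with angular prefactors that are trigonometric polynomials in $\theta_{m}$ and $\theta_{M}$.

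Matching orders in $\tau$ then produces both conclusions. The leading $\tau^{-2}$ coefficient yields a scalar equation of the form $\mathbf{f}(\mathbf{x}_{0})\cdot\mathbf{p}=0$ weighted by a nonzero angular factor; applying this to the two independent polarizations (shear and compressional) gives two linearly independent relations and forces $\mathbf{f}(\mathbf{x}_{0})=\mathbf{0}$, which is \eqref{eq_thm_1_f=0}. Under the stronger $C^{1,\alpha}$ hypothesis the $\tau^{-3}$ coefficient provides a homogeneous linear identity in the entries of $\nabla\mathbf{f}(\mathbf{x}_{0})$, and the explicit algebraic form \eqref{eq_thm_1_gradientf} with constants $A,B,C,D$ given by \eqref{def_thm_2_ABCD} emerges from the trigonometric evaluation of the first-moment sector integrals, in which the rotation matrix $\begin{pmatrix}0&-1\\1&0\end{pmatrix}$ encodes the orthogonality between the shear polarization $\mathbf{p}$ and the compressional direction $\mathbf{d}$. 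I expect the main obstacle to be precisely this bookkeeping: isolating which combinations of the two CGO families reproduce the exact trigonometric constants $A,B,C,D$, while simultaneously ensuring that the CGO correction $\tau^{-1}\mathbf{r}_{\tau}$ and the H\"older remainder $O(|\mathbf{x}-\mathbf{x}_{0}|^{1+\alpha})$ contribute only $o(\tau^{-3})$, which will require choosing the complex direction $\mathbf{d}$ along a ray aligned with the sector bisector so that the decay of $e^{-\tau\mathbf{d}\cdot(\mathbf{x}-\mathbf{x}_{0})}$ inside $S_{\mathbf{x}_{0},h}$ is uniform.
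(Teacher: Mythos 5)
Your overall strategy coincides with the paper's: Rellich plus unique continuation to get vanishing Cauchy data on $\Gamma^{\pm}_{\mathbf{x}_0,h}$, a CGO test function paired against $\mathbf{f}$ via Betti's formula, exponential smallness of the arc term, and Laplace-type asymptotics of the sector moments in $\tau$. However, there is a concrete flaw in your mechanism for deducing \eqref{eq_thm_1_f=0}. You take a complex direction $\mathbf{d}\in\mathbb{C}^2$ with $\mathbf{d}\cdot\mathbf{d}=0$ and claim to extract two linearly independent relations from a ``shear'' polarization ($\mathbf{d}\cdot\mathbf{p}=0$) and a ``compressional'' polarization ($\mathbf{p}\parallel\mathbf{d}$). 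In $\mathbb{C}^2$ these are the \emph{same} family: if $\mathbf{d}=(1,\mathrm{i})$ then $\mathbf{d}\cdot\mathbf{p}=0$ forces $\mathbf{p}=p_2(-\mathrm{i},1)=-\mathrm{i}p_2\,\mathbf{d}$, so the isotropic vector is orthogonal to itself and its orthogonal complement is spanned by itself. You therefore obtain only one scalar relation $\mathbf{f}(\mathbf{x}_0)\cdot\mathbf{p}=0$, not two. The conclusion is still correct, but for a different reason, which is the one the paper uses: after reducing to real-valued $\mathbf{f}$, the single complex relation $-\mathrm{i}f_1(\mathbf{x}_0)+f_2(\mathbf{x}_0)=0$ already forces $f_1(\mathbf{x}_0)=f_2(\mathbf{x}_0)=0$, since $|-\mathrm{i}a+b|=|(a,b)|$ for real $a,b$. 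Likewise, the two real equations in \eqref{eq_thm_1_gradientf} arise as the real and imaginary parts of the single complex identity $\int_{\theta_m}^{\theta_M}e^{-3\mathrm{i}\theta}g(\theta)\,\mathrm{d}\theta=0$, not from pairing two polarization families; your attribution of the rotation matrix to ``orthogonality between the shear polarization and the compressional direction'' is not the correct explanation. Note also that the paper does not use an exactly isotropic phase: it takes $\bm{\zeta}\cdot\bm{\zeta}=-k_{\mathrm{s}}^2$ and $\bm{\zeta}\cdot\bm{\eta}=0$, so the leading exponential solves the constant-density Navier equation exactly and only the variable part of $\rho$ must be absorbed by the remainder.

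Two further points need attention. First, you only claim an $L^2$ bound on the correction $\tau^{-1}\mathbf{r}_\tau$, but the arc term $\int_{\Lambda_{\mathbf{x}_0,h}}\left(T\mathbf{u}\cdot\mathbf{v}_\tau-\mathbf{u}\cdot T\mathbf{v}_\tau\right)\mathrm{d}s$ involves traces of $\mathbf{v}_\tau$ and of its traction on a one-dimensional arc; the paper needs the full hierarchy $\|\mathbf{R}\|_{L^2}\leqslant c/\tau$, $\|\nabla\mathbf{R}\|_{L^2}\leqslant c$, $\|\nabla^2\mathbf{R}\|_{L^2}\leqslant c\tau$ (Lemma \ref{lem_CGOs}) together with the trace theorem to control this term by $c\tau e^{-\delta h\tau}$. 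Second, you assert without proof that the leading angular factor is nonzero; this is where the convexity hypothesis $\theta_M-\theta_m\in(0,\pi)$ enters (the relevant angular integral of $e^{-2\mathrm{i}\theta}$ vanishes exactly when the opening angle is a multiple of $\pi$), and the paper establishes the corresponding lower bound in Lemma \ref{lem_IP_est_f0_int_Sh_u0}. These are fixable, but as written the proposal's key step for \eqref{eq_thm_1_f=0} does not go through.
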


\begin{remark}
	The vector function $ \mathbf{f}\left(\mathbf{x}\right) $ in Theorem \ref{thm_inverse_problem_non-radiating} can be either real-valued or complex-valued. However, when $ \mathbf{f} $ is complex-valued, since $ \lambda $, $ \mu $, $ \omega $, and $ \rho\left(\mathbf{x}\right) $ are real-valued, we can treat   the real and imaginary parts of $ \mathbf{f} $ separately. This implies the conclusions in Theorem \ref{thm_inverse_problem_non-radiating} hold for a complex-valued source $\mathbf f$. For simplicity, we assume that $ \mathbf{f} $ is real-valued in the subsequent analysis. However, the conclusions obtained in this paper are applicable to a complex-valued source function. 
\end{remark}

\begin{remark}
	The linear equations  \eqref{eq_thm_1_gradientf}  can be rewritten as
	\begin{align} \label{eq_thm_1_gradientf_rewritten}
		\begin{cases}
			A \frac{\partial f_1}{\partial x_1}\left(\mathbf{x}_0\right) + B \frac{\partial f_1}{\partial x_2}\left(\mathbf{x}_0\right) - C \frac{\partial f_2}{\partial x_1}\left(\mathbf{x}_0\right) - D \frac{\partial f_2}{\partial x_2}\left(\mathbf{x}_0\right) = 0 , \\
			C \frac{\partial f_1}{\partial x_1}\left(\mathbf{x}_0\right) + D \frac{\partial f_1}{\partial x_2}\left(\mathbf{x}_0\right) + A \frac{\partial f_2}{\partial x_1}\left(\mathbf{x}_0\right) + B \frac{\partial f_2}{\partial x_2}\left(\mathbf{x}_0\right) = 0 ,
		\end{cases}
	\end{align}
	with $ A $, $ B $, $ C $, and $ D $ defined in \eqref{def_thm_2_ABCD}. Specially,  when the corner is a right corner, namely, we take  $ \theta_M=\pi/2 $ and $ \theta_m=0 $. From \eqref{eq_thm_1_gradientf_rewritten} it can be obtained that 
	$$
	\nabla \mathbf f(\mathbf x_0) = \nabla \mathbf f(\mathbf x_0)^\top,\quad \text{tr}\left(\nabla\mathbf{f}\left(\mathbf{x}_0\right)\right)=0. 
	$$
	This means that when the support of a non-radiating source $\mathbf f$ has a right corner, under certain regularity assumption on $\mathbf f$ in a neighborhood of the underlying corner, $\nabla \mathbf f(\mathbf x_0)$ is symmetric with a vanishing trace property. 
\end{remark} 

By virtue of Theorem \ref{thm_inverse_problem_non-radiating}, if the support of an elastic source $\mathbf{f}$ has a convex corner, where $\mathbf{f}$ has $C^{1,\alpha}$ continuous around the underlying corner with $\alpha \in (0,1)$, and if either \eqref{eq_thm_1_f=0} and \eqref{eq_thm_1_gradientf_rewritten} is   violated, then $\mathbf f$ must radiate at every  frequency.

\begin{corollary} \label{cor_inverse_problem_radiating}
	Under the same settings as Theorem \ref{thm_inverse_problem_non-radiating}, \textcolor{black}{suppose that there exists a point $ \mathbf{x}_0\in\partial\Omega $ such that $ \Omega\cap D_{\mathbf{x}_0, h}=S_{\mathbf{x}_0, h} $, where $ D_{\mathbf{x}_0, h} $ and $ S_{\mathbf{x}_0, h} $ are defined by \eqref{def_introduction_Dx0h} and \eqref{def_inverse_problem_local_sector_Sh_Gammah_Lambdah} with $ h\in\mathbb{R}_+ $. Furthermore,} if the source term $ \mathbf{f}\left(\mathbf{x}\right) \in C^{1,\alpha} \left(\overline{S_{\mathbf{x}_0,h}}\right) $ with 	some $ \alpha \in \left(0,1\right) $ satisfies at least one of the following conditions:
	\begin{align} 
		\label{eq:1.13}
		\mathbf{f} \left(\mathbf{x}_0\right) &\neq \bm{0}, \\
		\label{eq:1.14}
		A \frac{\partial f_1}{\partial x_1}\left(\mathbf{x}_0\right) + B \frac{\partial f_1}{\partial x_2}\left(\mathbf{x}_0\right) - C& \frac{\partial f_2}{\partial x_1}\left(\mathbf{x}_0\right) - D \frac{\partial f_2}{\partial x_2}\left(\mathbf{x}_0\right) \neq 0 , \\
		\label{eq:1.15}
		C \frac{\partial f_1}{\partial x_1}\left(\mathbf{x}_0\right) + D \frac{\partial f_1}{\partial x_2}\left(\mathbf{x}_0\right) + A& \frac{\partial f_2}{\partial x_1}\left(\mathbf{x}_0\right) + B \frac{\partial f_2}{\partial x_2}\left(\mathbf{x}_0\right) \neq 0 ,
	\end{align}	
	where $ A $, $ B $, $ C $, and $ D $ are defined as \eqref{def_thm_2_ABCD}, then $ \mathbf{f}\left(\mathbf{x}\right) $ is radiating.
\end{corollary}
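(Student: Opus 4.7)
The plan is to prove Corollary \ref{cor_inverse_problem_radiating} directly by contraposition against Theorem \ref{thm_inverse_problem_non-radiating}. Assume, toward a contradiction, that $\mathbf{f}$ is non-radiating, i.e., $\mathbf{u}^{\infty}(\hat{\mathbf{x}}) = \mathbf{0}$ for all $\hat{\mathbf{x}} \in \mathbb{S}^1$. Since by hypothesis $\mathbf{x}_0 \in \partial\Omega$ is a corner point with $\Omega \cap D_{\mathbf{x}_0,h} = S_{\mathbf{x}_0,h}$, and $\mathbf{f} \in C^{1,\alpha}(\overline{S_{\mathbf{x}_0,h}})$ with $\alpha \in (0,1)$, all hypotheses of Theorem \ref{thm_inverse_problem_non-radiating} (in its stronger $C^{1,\alpha}$ version) are met. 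The embedding $C^{1,\alpha} \hookrightarrow C^{\alpha}$ in particular yields the $C^{\alpha}$ regularity required for \eqref{eq_thm_1_f=0}.

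Applying Theorem \ref{thm_inverse_problem_non-radiating} would then force both conclusions simultaneously: the value condition $\mathbf{f}(\mathbf{x}_0) = \mathbf{0}$ and the algebraic system \eqref{eq_thm_1_gradientf}, which, as noted in the paper, is equivalent to the pair of scalar equations \eqref{eq_thm_1_gradientf_rewritten} in terms of $A, B, C, D$ from \eqref{def_thm_2_ABCD}. However, each of the three hypotheses \eqref{eq:1.13}, \eqref{eq:1.14}, \eqref{eq:1.15} is precisely the negation of one of the equalities that Theorem \ref{thm_inverse_problem_non-radiating} imposes on a non-radiating source under the given regularity. Namely, \eqref{eq:1.13} directly contradicts \eqref{eq_thm_1_f=0}, while \eqref{eq:1.14} and \eqref{eq:1.15} contradict the first and second equation of \eqref{eq_thm_1_gradientf_rewritten}, respectively. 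Since the corollary assumes at least one of these three conditions holds, at least one contradiction is produced, and the non-radiating assumption must be rejected.

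Conceptually there is no obstacle beyond correctly invoking Theorem \ref{thm_inverse_problem_non-radiating}: the corollary is a clean logical contrapositive of that characterization. The only subtlety worth highlighting explicitly in the writeup is the regularity bookkeeping, namely that $\mathbf{f} \in C^{1,\alpha}(\overline{S_{\mathbf{x}_0,h}})$ suffices to trigger both conclusions of Theorem \ref{thm_inverse_problem_non-radiating}, so that all three scalar equalities \eqref{eq_thm_1_f=0} and \eqref{eq_thm_1_gradientf_rewritten} are available to be contradicted. No further analytic work, no additional CGO construction, and no asymptotic argument is required in this step; the corollary is essentially a reformulation of the theorem emphasizing its use as a sufficient radiation criterion.
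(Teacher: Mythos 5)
Your proposal is correct and coincides with the paper's own (essentially implicit) justification: the corollary is stated as a direct contrapositive of Theorem \ref{thm_inverse_problem_non-radiating}, since each of \eqref{eq:1.13}--\eqref{eq:1.15} negates one of the three scalar identities \eqref{eq_thm_1_f=0} and \eqref{eq_thm_1_gradientf_rewritten} forced on a non-radiating $C^{1,\alpha}$ source at the corner. Your regularity bookkeeping ($C^{1,\alpha}\hookrightarrow C^{\alpha}$, so both conclusions of the theorem apply) is the only point needing mention, and you handle it correctly.
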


\begin{remark}
	\textcolor{black}{It is noted that in our study we assume that $\mathbf f \in L^\infty(\mathbb R^2)$ with ${\rm supp}(\mathbf f)=\overline{\Omega}$. Therefore if $\mathbf f$ is not continuous across $\partial\Omega$, it is readily to claim that $\mathbf f$ does not vanish at any boundary point of $\partial \Omega$, which means that \eqref{eq:1.13} is satisfied when $\Omega$ contain a convex sectorial point $\mathbf x_0$.} However, it is emphasized that it is possible to have that $\mathbf{f}\left(\mathbf{x}\right) $ vanishes at some boundary points of its support. Hence \eqref{eq:1.13} cannot be fulfilled under certain physical scenarios. 	\textcolor{black}{When $\Omega$ contain a convex sectorial point $\mathbf x_0$, $\nabla \mathbf f $ is H\"older continuous in $\overline{S_{\mathbf x_0,h}}$ and is not continuous across $\partial \Omega$, we can claim that} the assumptions \eqref{eq:1.14} and \eqref{eq:1.15} are typically valid from a physical perspective, as they depend on specific values of the gradient of the source and the angles $\theta_m$ and $\theta_M$ of the convex sector $S_{\mathbf{x}_0,h}$. \textcolor{black}{We observe that the coefficients $A$, $B$, $C$ and $D$ of \eqref{eq:1.14} and \eqref{eq:1.15} are related to the sinusoidal and cosinoidal functions of the angles $\theta_m$ and $\theta_M$ of the convex sector $  S_{\mathbf{x}_0,h}  $, which are  highly intricate coefficients. If the assumptions \eqref{eq:1.14} and \eqref{eq:1.15} are violated, which implies that \eqref{eq_thm_1_gradientf_rewritten} is satisfied. Hence the gradient of the source term at the corner point must fulfill a linear system \eqref{eq_thm_1_gradientf_rewritten} with intricate coefficients  $A$, $B$, $C$ and $D$, which is generally difficult to satisfy when $ \nabla \mathbf{f}\left(\mathbf{x}_0\right) \neq \mathbf{0}$. Consequently, \eqref{eq:1.14} and \eqref{eq:1.15} typically hold  which implies that $ \mathbf{f} $ is radiating at any angular frequency, provided that $ \nabla \mathbf{f} $ does not vanish at $\mathbf{x}_0$ and is H\"older continuous in the sector $  \overline{S_{\mathbf{x}_0,h}}  $. }
\end{remark}

An important application of Theorem \ref{thm_inverse_problem_non-radiating} is to investigate the geometrical inverse problem defined by \eqref{problem_introduction_u_fnablafOmega} by a single far-field measurement.  Before establishing the local and global  unique recovery results for the inverse source problem \eqref{problem_introduction_u_fnablafOmega} in Theorems \ref{thm_inverse_problem_local_uniqueness} and \ref{thm_inverse_problem_global_uniqueness}, we introduce the admissible class of elastic sources for our subsequent study. 

\begin{definition} \label{def_admissible_elastic_source_local}
	The pair $ \left(\Omega; \mathbf{f}\right) $ is referred to as \textcolor{black}{an admissible elastic source} if it satisfies the following conditions:
	\begin{enumerate} [(1)]
		\item $ \Omega\subset D_R $ with $R\in \mathbb{R}_+$ is a bounded Lipschitz domain in $\mathbb{R}^2 $ with a connected complement. The vector function $ \mathbf{f}\left(\mathbf{x}\right)\in L^{\infty}\left(\mathbb{R}^2\right) $ and $ \supp\left(\mathbf{f}\right)=\overline{\Omega} $. 
		\item If there is a point $ \mathbf{x}_0=\left(x_{01},x_{02}\right)^{\top}\in\partial\Omega $ satisfying $ \Omega\cap D_{\mathbf{x}_0,h}=S_{\mathbf{x}_0,h} $, where the sector $ S_{\mathbf{x}_0,h} $ is given by \eqref{def_inverse_problem_local_sector_Sh_Gammah_Lambdah} with a sufficiently small $ h\in\mathbb{R}_+ $, then either of the following two assumptions is satisfied
		\begin{itemize}
			\item[(2.1)] $ \mathbf{f}\left(\mathbf{x}\right)\in C^{\alpha}\left(\overline{S_{\mathbf{x}_0,h}}\right) $ for some $ 0<\alpha<1 $ and fulfills  \eqref{eq:1.13}   in Corollary \ref{cor_inverse_problem_radiating} at $ \mathbf{x}_0 $;
			\item[(2.2)] $ \mathbf{f}\left(\mathbf{x}\right)\in C^{1, \alpha}\left(\overline{S_{\mathbf{x}_0,h}}\right) $ for some $ 0<\alpha<1 $ and fulfills at least one of the conditions \eqref{eq:1.13}  to \eqref{eq:1.15}  in Corollary \ref{cor_inverse_problem_radiating} at $ \mathbf{x}_0 $.
		\end{itemize}
	\end{enumerate}
\end{definition}

The following theorem demonstrates that the local uniqueness result for determining the shape of the source from a single far-field pattern.

\begin{theorem} \label{thm_inverse_problem_local_uniqueness}
Consider the elastic source scattering problem governed by \eqref{model_global_non-nested_R2} and \eqref{Kupradze_radiation_condition}. Let $\left(\Omega; \mathbf{f}\right)$ and $\left(\Omega^{\prime}; \mathbf{f}^{\prime}\right)$ be admissible elastic sources described by Definition \ref{def_admissible_elastic_source_local}. Let $ \mathbf{u}_{\infty} $ and $ \mathbf{u}_{\infty}^{\prime} $ denote the elastic far-field patterns corresponding to sources $\left(\Omega; \mathbf{f}\right)$ and $\left(\Omega^{\prime}; \mathbf{f}^{\prime}\right)$ respectively. If the far-field patterns coincide, i.e.,
	\begin{align*} 
		\mathbf{u}_{\infty}\left(\hat{\mathbf{x}}\right) =\mathbf{u}_{\infty}^{\prime} \left(\hat{\mathbf{x}}\right) , \quad  \mathbf{x}\in\mathbb{S}^1, 
	\end{align*}
\textcolor{black}{then the symmetric difference $ \Omega\Delta\Omega^{\prime}:=\left(\Omega\backslash\Omega^{\prime}\right)\cup\left(\Omega^{\prime}\backslash\Omega\right) $ cannot contain a convex sectorial corner point $\mathbf{x}_0$, where it has the properties that
	\begin{enumerate} [(1)]
		\item The convex sector $ S_{\mathbf{x}_0,h} $ of $\Omega\Delta\Omega^{\prime}$, for a sufficiently small $h>0$, has both of its boundaries $ \Gamma_{\mathbf{x}_0,h}^\pm $ lying simultaneously on $ \partial\Omega$ or both on $ \partial\Omega^{\prime} $.
		\item For every point $\mathbf{x}$ on the boundaries $ \Gamma_{\mathbf{x}_0,h}^\pm $, there exists an unbounded path $\gamma\subset\mathbb{R}^2\backslash\left(\overline{\Omega}\cup\overline{\Omega^\prime}\right)$ connecting $\mathbf{x}$ to infinity.
\end{enumerate}}
\end{theorem}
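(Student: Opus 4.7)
The strategy is a contradiction argument that reduces the local uniqueness statement to the corner characterization in Theorem \ref{thm_inverse_problem_non-radiating}. Suppose, to the contrary, that $\Omega\Delta\Omega'$ contains a convex sectorial corner point $\mathbf{x}_0$ satisfying the geometric conditions (1) and (2), and let $\mathbf{u},\mathbf{u}'$ denote the radiating fields produced by $(\Omega;\mathbf{f})$ and $(\Omega';\mathbf{f}')$ respectively. Since the two far-field patterns coincide on $\mathbb{S}^1$, Rellich's lemma yields $\mathbf{w}:=\mathbf{u}-\mathbf{u}'\equiv\mathbf{0}$ outside $\overline{D_R}$. By condition (1) I may assume $\Gamma^{\pm}_{\mathbf{x}_0,h}\subset\partial\Omega$, and then distinguish two geometric scenarios: (a) $S_{\mathbf{x}_0,h}\subset\Omega\setminus\Omega'$, and (b) $S_{\mathbf{x}_0,h}\subset\Omega'\setminus\Omega$. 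By the symmetry of interchanging the primed and unprimed data, it is enough to treat (a).

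In $\mathbb{R}^2\setminus\overline{\Omega\cup\Omega'}$ both $\mathbf{f}$ and $\mathbf{f}'$ vanish, so $\mathbf{w}$ solves the homogeneous Navier system $\mathcal{L}\mathbf{w}+\omega^2\rho\,\mathbf{w}=\mathbf{0}$ there. Unique continuation from the unbounded component containing $\mathbb{R}^2\setminus\overline{D_R}$, combined with the path hypothesis (2) which guarantees that every point of $\Gamma^{\pm}_{\mathbf{x}_0,h}$ is accessible from infinity through $\mathbb{R}^2\setminus\overline{\Omega\cup\Omega'}$, forces $\mathbf{w}\equiv\mathbf{0}$ on a one-sided exterior neighborhood of each arm $\Gamma^{\pm}_{\mathbf{x}_0,h}$. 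The $H^2_{\mathrm{loc}}$-regularity of $\mathbf{u},\mathbf{u}'$ implies continuity of $\mathbf{w}$ and of its co-normal traction across $\Gamma^{\pm}_{\mathbf{x}_0,h}$, so the interior traces and tractions of $\mathbf{w}$ on $\Gamma^{\pm}_{\mathbf{x}_0,h}$ also vanish. Since $S_{\mathbf{x}_0,h}\cap\Omega'=\emptyset$, one further has $\mathbf{f}'\equiv\mathbf{0}$ inside the sector, which produces the local identity
\begin{equation*}
\mathcal{L}\mathbf{w}+\omega^2\rho(\mathbf{x})\,\mathbf{w}=\mathbf{f}\quad\text{in } S_{\mathbf{x}_0,h},
\end{equation*}
with homogeneous Cauchy data on $\Gamma^{\pm}_{\mathbf{x}_0,h}$.

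This is exactly the microlocal configuration to which the CGO-based analysis used in the proof of Theorem \ref{thm_inverse_problem_non-radiating} applies. Pairing the identity above against the CGO solutions constructed there and carrying out the asymptotic expansion at $\mathbf{x}_0$ forces $\mathbf{f}(\mathbf{x}_0)=\mathbf{0}$ under the H\"older hypothesis (2.1), and additionally the algebraic relation \eqref{eq_thm_1_gradientf_rewritten} on $\nabla\mathbf{f}(\mathbf{x}_0)$ under the stronger $C^{1,\alpha}$ hypothesis (2.2). Either conclusion contradicts the admissibility of $(\Omega;\mathbf{f})$, which requires at least one of \eqref{eq:1.13}--\eqref{eq:1.15} to hold at $\mathbf{x}_0$. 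Scenario (b) is dispatched by the same argument applied to $(\Omega';\mathbf{f}')$.

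The principal obstacle is the second step: upgrading Rellich's exterior vanishing into vanishing Cauchy data on both arms of the sector as seen from inside $S_{\mathbf{x}_0,h}$. This requires checking that the relevant components of $\mathbb{R}^2\setminus\overline{\Omega\cup\Omega'}$ adjacent to $\Gamma^{\pm}_{\mathbf{x}_0,h}$ lie in the unbounded component reached by unique continuation (which is precisely what hypothesis (2) supplies), together with enough regularity of $\mathbf{w}$ across $\Gamma^{\pm}_{\mathbf{x}_0,h}$ so that the CGO integration by parts does not pick up extraneous distributional surface contributions. Once this transmission step is secured, the contradiction follows from Theorem \ref{thm_inverse_problem_non-radiating} with no further work.
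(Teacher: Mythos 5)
Your proposal is correct and follows essentially the same route as the paper: argue by contradiction, use Rellich's lemma together with unique continuation along the paths guaranteed by hypothesis (2) to conclude $\mathbf{u}=\mathbf{u}'$ outside the sector near $\mathbf{x}_0$, pass the vanishing Cauchy data of $\mathbf{w}=\mathbf{u}-\mathbf{u}'$ onto $\Gamma^{\pm}_{\mathbf{x}_0,h}$ via the transmission conditions, and then invoke the CGO-based corner characterization (the paper packages this as Theorem \ref{thm_inverse_problem_Sh_f0=0_nablaf0=0}) to contradict the admissibility conditions \eqref{eq:1.13}--\eqref{eq:1.15}. Your extra care about the transmission step and the component of $\mathbb{R}^2\setminus(\overline{\Omega}\cup\overline{\Omega'})$ reached by unique continuation is exactly the point the paper treats tersely, and your case split is handled in the paper by a without-loss-of-generality reduction.
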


We now present the global uniqueness results for the inverse source problems. Under a single far-field measurement, we establish uniqueness in determining both the shape and location of the support of the radiating source, assuming the support is a convex polygon. Additionally, we demonstrate that the value of the source function at each corner of its support can be simultaneously identified using the same single far-field measurement. Furthermore, when the source function exhibits $C^{1,\alpha}$ regularity in the neighborhood of a corner, the gradient of the source function at that corner can also be generically determined.

\begin{theorem} \label{thm_inverse_problem_global_uniqueness}
	Consider the elastic scattering  problem \eqref{model_global_non-nested_R2} and \eqref{Kupradze_radiation_condition}. For the convex polygons $ \Omega $ and $ \Omega^{\prime} $, assume that $\left(\Omega; \mathbf{f}\right)$ and $\left(\Omega^{\prime}; \mathbf{f}^{\prime}\right)$ are two admissible elastic sources described by Definition \ref{def_admissible_elastic_source_local}.  Let $ \mathbf{u}_{\infty} $ and $ \mathbf{u}_{\infty}^{\prime} $ denote the far-field patterns corresponding to sources $\left(\Omega; \mathbf{f}\right)$ and $\left(\Omega^{\prime}; \mathbf{f}^{\prime}\right)$. If the far-field patterns are identical, i.e.,
	\begin{align*} 
		\mathbf{u}_{\infty}\left(\hat{\mathbf{x}}\right) =\mathbf{u}_{\infty}^{\prime} \left(\hat{\mathbf{x}}\right), \quad \mathbf{x}\in\mathbb{S}^1,
	\end{align*} 
	then $ \Omega = \Omega^{\prime} $. Furthermore,  for every vertex $ \mathbf{x}_{i} $ of $ \Omega $, it follows that 
	\begin{align} \label{eq_thm_3_f=fprime}
		\mathbf{f}\left(\mathbf{x}_{i}\right)=\mathbf{f}^{\prime}\left(\mathbf{x}_{i}\right).
	\end{align}
	When $ \mathbf{f}\left(\mathbf{x}\right)\textcolor{black}{,\mathbf{f}^{\prime}\left(\mathbf{x}\right)}\in C^{1, \alpha}\left(\overline{S_{\mathbf{x}_{i},h}}\right) $ for some $ 0<\alpha<1 $ and $ h\in\mathbb{R}_+ $, it also satisfies
	\begin{align} \label{eq_thm_3_gradientf=gradientfprime}
		\begin{cases}
			A \frac{\partial \left(f_1-f^{\prime}_1\right)}{\partial x_1}\left(\mathbf{x}_{i}\right) + B \frac{\partial \left(f_1-f^{\prime}_1\right)}{\partial x_2}\left(\mathbf{x}_{i}\right) - C \frac{\partial \left(f_2-f^{\prime}_2\right)}{\partial x_1}\left(\mathbf{x}_{i}\right) - D \frac{\partial \left(f_2-f^{\prime}_2\right)}{\partial x_2}\left(\mathbf{x}_{i}\right) = 0 , \\
			C \frac{\partial \left(f_1-f^{\prime}_1\right)}{\partial x_1}\left(\mathbf{x}_{i}\right) + D \frac{\partial \left(f_1-f^{\prime}_1\right)}{\partial x_2}\left(\mathbf{x}_{i}\right) + A \frac{\partial \left(f_2-f^{\prime}_2\right)}{\partial x_1}\left(\mathbf{x}_{i}\right) + B \frac{\partial \left(f_2-f^{\prime}_2\right)}{\partial x_2}\left(\mathbf{x}_{i}\right) = 0 ,
		\end{cases}
	\end{align}
	where the coefficients $ A $, $ B $, $ C $, and $ D $ are defined in \eqref{def_thm_2_ABCD} correspondingly.	
\end{theorem}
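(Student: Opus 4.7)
The plan is to reduce the global identifiability to the local non-radiating characterization of Theorem \ref{thm_inverse_problem_non-radiating} via a two-stage argument: first establish that the supports coincide by invoking the local uniqueness result Theorem \ref{thm_inverse_problem_local_uniqueness}, then analyze the difference $\mathbf{f}-\mathbf{f}^\prime$ as a non-radiating source on the common support and apply Theorem \ref{thm_inverse_problem_non-radiating} vertex by vertex.

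First I would prove $\Omega=\Omega^\prime$ by contradiction. Suppose $\Omega\neq\Omega^\prime$. Since both are convex polygons, I claim one polygon must have a vertex lying outside the closure of the other; otherwise, by convexity, every vertex of $\Omega$ would lie in $\overline{\Omega^\prime}$ and vice versa, forcing $\overline{\Omega}=\overline{\Omega^\prime}$. Pick such a protruding vertex $\mathbf{x}_0$, say of $\Omega$, with $\mathbf{x}_0\notin\overline{\Omega^\prime}$. For sufficiently small $h>0$, $D_{\mathbf{x}_0,h}\cap\overline{\Omega^\prime}=\emptyset$, so $(\Omega\Delta\Omega^\prime)\cap D_{\mathbf{x}_0,h}=\Omega\cap D_{\mathbf{x}_0,h}=S_{\mathbf{x}_0,h}$, and both boundary segments $\Gamma_{\mathbf{x}_0,h}^{\pm}$ lie on $\partial\Omega$, verifying condition (1) of Theorem \ref{thm_inverse_problem_local_uniqueness}. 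For condition (2), from any point on $\Gamma_{\mathbf{x}_0,h}^{\pm}\setminus\{\mathbf{x}_0\}$ I use the convexity of $\Omega$ to perturb slightly into the exterior half-plane supporting $\Omega$ along the relevant edge, and then move to infinity through $\mathbb{R}^2\setminus(\overline{\Omega}\cup\overline{\Omega^\prime})$, using that both supports are bounded convex sets. This contradicts Theorem \ref{thm_inverse_problem_local_uniqueness} and yields $\Omega=\Omega^\prime$.

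With $\Omega=\Omega^\prime$ in hand, I set $\mathbf{g}(\mathbf{x}):=\mathbf{f}(\mathbf{x})-\mathbf{f}^\prime(\mathbf{x})\in L^{\infty}(\mathbb{R}^2)$ with $\supp(\mathbf{g})\subset\overline{\Omega}$. By linearity of the scattering system \eqref{model_global_non-nested_R2}--\eqref{Kupradze_radiation_condition}, the field generated by $\mathbf{g}$ is $\mathbf{u}-\mathbf{u}^\prime$, whose far-field pattern is $\mathbf{u}_\infty-\mathbf{u}_\infty^\prime\equiv\mathbf{0}$ on $\mathbb{S}^1$; hence $\mathbf{g}$ is non-radiating. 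At each vertex $\mathbf{x}_i$ of the convex polygon $\Omega$, there is $h>0$ with $\Omega\cap D_{\mathbf{x}_i,h}=S_{\mathbf{x}_i,h}$, and $\mathbf{g}$ inherits the Hölder regularity of $\mathbf{f}$ and $\mathbf{f}^\prime$ near $\mathbf{x}_i$ by admissibility. Applying Theorem \ref{thm_inverse_problem_non-radiating} to $\mathbf{g}$, the vanishing conclusion \eqref{eq_thm_1_f=0} gives $\mathbf{g}(\mathbf{x}_i)=\mathbf{0}$, which is exactly \eqref{eq_thm_3_f=fprime}. When $\mathbf{f},\mathbf{f}^\prime\in C^{1,\alpha}(\overline{S_{\mathbf{x}_i,h}})$, so is $\mathbf{g}$, and the algebraic relation \eqref{eq_thm_1_gradientf_rewritten} applied to $\nabla\mathbf{g}(\mathbf{x}_i)=\nabla\mathbf{f}(\mathbf{x}_i)-\nabla\mathbf{f}^\prime(\mathbf{x}_i)$ with the coefficients $A,B,C,D$ in \eqref{def_thm_2_ABCD} determined by the angles $\theta_m,\theta_M$ at $\mathbf{x}_i$ yields \eqref{eq_thm_3_gradientf=gradientfprime}.

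The main obstacle lies in the first step: one must carefully exploit the convexity of both polygons to produce a protruding vertex whose associated sectorial corner of $\Omega\Delta\Omega^\prime$ simultaneously fulfills both geometric conditions in Theorem \ref{thm_inverse_problem_local_uniqueness}, namely that the two boundary edges of the sector lie on the same $\partial\Omega$ (or the same $\partial\Omega^\prime$), and that an unbounded path in $\mathbb{R}^2\setminus(\overline{\Omega}\cup\overline{\Omega^\prime})$ can be drawn from each boundary point to infinity. Once this geometric reduction is established, the remaining conclusions follow by linearity of the direct problem and a direct invocation of Theorem \ref{thm_inverse_problem_non-radiating}.
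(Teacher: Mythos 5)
Your proposal is correct and follows essentially the same route as the paper: local uniqueness (Theorem \ref{thm_inverse_problem_local_uniqueness}) applied at the vertices rules out any sectorial corner of $\Omega\Delta\Omega'$, convexity then forces $\Omega=\Omega'$, and the corner characterization applied to the difference of the sources gives \eqref{eq_thm_3_f=fprime} and \eqref{eq_thm_3_gradientf=gradientfprime}. The only (cosmetic) difference is that you invoke Theorem \ref{thm_inverse_problem_non-radiating} for $\mathbf{g}=\mathbf{f}-\mathbf{f}'$, whose hypothesis formally requires $\supp(\mathbf{g})=\overline{\Omega}$ while here one only knows $\supp(\mathbf{g})\subset\overline{\Omega}$; the paper sidesteps this by passing directly to the Cauchy-data formulation and applying Theorem \ref{thm_inverse_problem_Sh_f0=0_nablaf0=0} to $\mathbf{w}=\mathbf{u}-\mathbf{u}'$, which is what your argument amounts to anyway.
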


\begin{remark}
	Equation \eqref{eq_thm_3_gradientf=gradientfprime} presents an intricate result, stating that \textcolor{black}{if $\nabla \mathbf{f}\left(\mathbf{x}\right)$ and $\nabla \mathbf{f}^{\prime}\left(\mathbf{x}\right)$ are H\"older continuous in $\overline{S_{\mathbf x_{i},h}}$, then} the gradient  $ \nabla \left(\mathbf{f}-\mathbf{f}^{\prime}\right) $ must satisfy algebraic equations, \textcolor{black}{wherein the coefficients $A$, $B$, $C$ and $D$ of  \eqref{eq_thm_3_gradientf=gradientfprime}  are associated with the sinusoidal and cosinoidal functions of the angles $\theta_{i,m}$ and $\theta_{i,M}$ of each convex sector $  S_{\mathbf{x}_{i},h}  $, which are  highly intricate.} From a physical intuition perspective, the situation that the components of $ \nabla \left(\mathbf{f}-\mathbf{f}^{\prime}\right) $ at the corner \textcolor{black}{satisfy \eqref{eq_thm_3_gradientf=gradientfprime} yet not} equal to $\mathbf{0}$ is generally unlikely to occur, \textcolor{black}{as the coefficients of \eqref{eq_thm_3_gradientf=gradientfprime}, defined  by \eqref{def_thm_2_ABCD}, are highly intricate coefficients.} Consequently, \textcolor{black}{\eqref{eq_thm_3_gradientf=gradientfprime} implies that} $ \grad\mathbf{f} $ generically  equals $ \grad\mathbf{f}^{\prime} $ at every convex corner. Furthermore, from \eqref{eq_thm_3_gradientf=gradientfprime}, we observe that for every vertex $ \mathbf{x}_{i} $ of $ \partial \Omega $, if any two components of  $ \grad\mathbf{f}$ are identical to their counterparts  in  $ \grad\mathbf{f}^{\prime}  $, then the remaining components must also be identical. For example, if $ \frac{\partial f_1}{\partial x_1}(\mathbf{x}_{i}) = \frac{\partial f^{\prime}_1}{\partial x_1}(\mathbf{x}_{i}) $ and $ \frac{\partial f_2}{\partial x_2}(\mathbf{x}_{i}) = \frac{\partial f^{\prime}_2}{\partial x_2}(\mathbf{x}_{i}) $, then it follows that $ \frac{\partial f_1}{\partial x_2}(\mathbf{x}_{i}) = \frac{\partial f^{\prime}_1}{\partial x_2}(\mathbf{x}_{i}) $ and $ \frac{\partial f_2}{\partial x_1}(\mathbf{x}_{i}) = \frac{\partial f^{\prime}_2}{\partial x_1}(\mathbf{x}_{i}) $.
\end{remark}

The rest of this paper is organized as follows. In Section \ref{section_inverse_problem}, we show the proofs of theorems stated above. In Appendix \ref{section_appendix}, we present the construction and estimates of the CGO solution involved in Section \ref{section_inverse_problem}. 
%-------------------------------------------------------

\section{Proof of theorems} \label{section_inverse_problem}

In this section, we shall prove the theorems stated in Section \ref{section_introduction}. To begin with, we present the CGO solution $ \mathbf{u}_0 \left(\mathbf{x}\right) $ as follows, which works as a test function in our analysis. 

\begin{lemma} \label{lem_CGOs}
	Suppose that the positive function $ \rho\left(\mathbf{x}\right)\in C^2\left(\mathbb{R}^2\right) $ satisfies $ \rho\left(\mathbf{x}\right)=1 $ for $ \mathbf{x}\in\mathbb{R}^2 \backslash \overline{D_R} $. Let $ 0 < R < R_1 < R^{\prime} $, and $ D_{R_1}:=\left\{\mathbf{x} \in \mathbb{R}^2:\left| \mathbf{x} \right|< R_1\right\} $. Recall that $ k_{\mathrm{s}} $ is defined in \eqref{eq_introduction_kpks}. \textcolor{black}{Let $ \mathbf{d} $ and $ \mathbf{d}^{\perp}\in \mathbb{S}^1 $ be} the unit vectors fulfilling that $ \mathbf{d}^{\perp} $ is perpendicular to $ \mathbf{d} $. Assume that 
	\begin{align} \label{def_IP_zeta_eta}
		\bm{\zeta} := \tau \mathbf{d} + \mathrm{i}\sqrt{ \tau^2 + k_{\mathrm{s}}^2 } \mathbf{d}^{\perp}, \quad \bm{\eta} := - \mathrm{i} \sqrt{ 1 + \frac{k_{\mathrm{s}}^2}{\tau^2}} \mathbf{d} + \mathbf{d}^\perp,
	\end{align}
	where $ \tau $ is a constant. If $\tau$ is large enough, then there exists a function  $ \mathbf{R}\left(\mathbf{x}\right)\in C^2\left(D_{R_1}\right) $ such that the Complex Geometrical Optics solution given by
	\begin{align} \label{def_CGOs_u0}
		\mathbf{u}_0 \left(\mathbf{x}\right) := e^{\bm{\zeta} \cdot \mathbf{x}} \bm{\eta} + e^{\bm{\zeta} \cdot \mathbf{x}} \mathbf{R}\left(\mathbf{x}\right), 
	\end{align}
	satisfies the Navier equation 
	\begin{align} \label{eq_CGOs_Navier_equation_no_source}
		\mathcal{L} \mathbf{u} + \omega^2 \rho\left(\mathbf{x}\right) \mathbf{u} = \bm{0} \quad \text{ in } D_{R_1}.
	\end{align}
	Furthermore, when $ \tau $ is sufficiently large, the following estimations hold
	\begin{align} \label{est_inverse_problem_CGO}
		\left\| \mathbf{R} \left(\mathbf{x}\right) \right\|_{L^2 \left(D_{R_1}\right)} \leqslant \frac{c}{\tau}, \quad \left\| \nabla \mathbf{R} \left(\mathbf{x}\right) \right\|_{L^2 \left(D_{R_1}\right)} \leqslant c, \quad \left\| \nabla^2 \mathbf{R} \left(\mathbf{x}\right) \right\|_{L^2 \left(D_{R_1}\right)} \leqslant c \tau, 
	\end{align}
	where $ c $ is a constant depending only on $ R_1 $, $ R^{\prime} $, $ \lambda $, $ \mu $, $ \omega $, and $ \rho\left(\mathbf{x}\right) $.
\end{lemma}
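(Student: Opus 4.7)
The plan is to reduce the existence of $\mathbf{R}$ to solving a conjugated elliptic system and then invert that system by a standard CGO-solvability argument combined with Helmholtz decomposition.

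First I would exploit the algebraic choices of $\bm{\zeta}$ and $\bm{\eta}$. A direct computation using \eqref{def_IP_zeta_eta} gives
$$\bm{\zeta}\cdot\bm{\zeta}=\tau^2-(\tau^2+k_{\mathrm{s}}^2)=-k_{\mathrm{s}}^2,\qquad \bm{\zeta}\cdot\bm{\eta}=-\mathrm{i}\sqrt{\tau^2+k_{\mathrm{s}}^2}+\mathrm{i}\sqrt{\tau^2+k_{\mathrm{s}}^2}=0.$$
Consequently $e^{\bm{\zeta}\cdot\mathbf{x}}\bm{\eta}$ is divergence-free and $\Delta(e^{\bm{\zeta}\cdot\mathbf{x}}\bm{\eta})=-k_{\mathrm{s}}^2 e^{\bm{\zeta}\cdot\mathbf{x}}\bm{\eta}$, so that
$$\left(\mathcal{L}+\omega^2\rho\right)\!\bigl(e^{\bm{\zeta}\cdot\mathbf{x}}\bm{\eta}\bigr)=\bigl(-\mu k_{\mathrm{s}}^2+\omega^2\rho\bigr)e^{\bm{\zeta}\cdot\mathbf{x}}\bm{\eta}=\omega^2(\rho-1)\,e^{\bm{\zeta}\cdot\mathbf{x}}\bm{\eta},$$
where I have used $\mu k_{\mathrm{s}}^2=\omega^2$ from \eqref{eq_introduction_kpks}. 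Since $\rho-1$ is compactly supported in $\overline{D_R}\subset D_{R_1}$, the right-hand side is a localized $\mathcal{O}(1)$ remainder that $\mathbf{R}$ must cancel.

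Substituting $\mathbf{u}_0=e^{\bm{\zeta}\cdot\mathbf{x}}(\bm{\eta}+\mathbf{R})$ into \eqref{eq_CGOs_Navier_equation_no_source} and dividing by $e^{\bm{\zeta}\cdot\mathbf{x}}$, the problem reduces to solving the conjugated system
$$L_{\bm{\zeta}}\mathbf{R}:=\mu\bigl(\Delta+2\bm{\zeta}\cdot\nabla-k_{\mathrm{s}}^2\bigr)\mathbf{R}+(\lambda+\mu)(\nabla+\bm{\zeta})(\nabla+\bm{\zeta})\cdot\mathbf{R}+\omega^2\rho\,\mathbf{R}=-\omega^2(\rho-1)\bm{\eta}$$
in $D_{R_1}$. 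The principal $\tau$-dependence sits in the first-order terms $2\mu\bm{\zeta}\cdot\nabla$ and $(\lambda+\mu)\bm{\zeta}\,\bm{\zeta}\cdot$. To invert $L_{\bm{\zeta}}$, the plan is to apply the Helmholtz decomposition $\mathbf{R}=\nabla\varphi+\mathbf{curl}\,\psi$, which decouples the system (modulo lower-order couplings due to the variable $\rho(\mathbf{x})$) into two scalar equations of Faddeev type, namely $(\Delta+2\bm{\zeta}\cdot\nabla+k_{\mathrm{p}}^2)\varphi = F_{\mathrm{p}}$ and $(\Delta+2\bm{\zeta}\cdot\nabla+k_{\mathrm{s}}^2)\psi = F_{\mathrm{s}}$, where the right-hand sides collect the inhomogeneous $\omega^2(\rho-1)\bm{\eta}$ term together with lower-order $\rho$-dependent couplings.

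For each scalar equation I would invoke the classical Faddeev/Sylvester--Uhlmann solvability result: after extending $\rho-1$ to be compactly supported in a slightly larger ball $D_{R'}\supset D_{R_1}$, the constant-coefficient operator $\Delta+2\bm{\zeta}\cdot\nabla$ admits a right inverse $G_{\bm{\zeta}}$ on $L^2(D_{R'})$ satisfying the Fourier-multiplier bounds
$$\|G_{\bm{\zeta}}f\|_{L^2(D_{R'})}\leqslant \frac{c}{\tau}\|f\|_{L^2(D_{R'})},\qquad \|\nabla^{j}G_{\bm{\zeta}}f\|_{L^2(D_{R'})}\leqslant c\,\tau^{j-1}\|f\|_{L^2(D_{R'})},\quad j=1,2,$$
because $|\bm{\zeta}|\sim\tau$ is the characteristic scale. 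Absorbing the zeroth-order perturbations $k_{\mathrm{p}}^2,k_{\mathrm{s}}^2,\omega^2\rho$ and the Helmholtz-coupling terms via a contraction mapping on $L^2(D_{R'})$ (the smallness $c/\tau$ provides the contraction for $\tau$ large), I obtain a unique $\mathbf{R}\in L^2(D_{R'})$ with $\|\mathbf{R}\|_{L^2}\leqslant c/\tau$. Elliptic regularity for $L_{\bm{\zeta}}$, together with the bounds on $\nabla^{j}G_{\bm{\zeta}}$, then boosts $\mathbf{R}$ to $C^2(D_{R_1})$ and yields the remaining two estimates in \eqref{est_inverse_problem_CGO} after restricting to the slightly smaller ball $D_{R_1}$.

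The main obstacle will be the coupling between the compressional and shear parts caused by the variable density $\rho(\mathbf{x})$, which prevents a clean decoupling of the Helmholtz decomposition: the lower-order terms mix $\varphi$ and $\psi$. The decisive point is that each of these coupling terms is of order $0$ or $1$ in derivatives, so the gain of a full power of $\tau^{-1}$ from $G_{\bm{\zeta}}$ absorbs them and closes the fixed-point argument once $\tau$ exceeds a threshold depending only on $R_1$, $R'$, $\lambda$, $\mu$, $\omega$, and $\|\rho-1\|_{C^2}$.
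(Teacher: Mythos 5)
Your algebraic preparation is correct and matches the paper's starting point: $\bm{\zeta}\cdot\bm{\zeta}=-k_{\mathrm{s}}^2$, $\bm{\zeta}\cdot\bm{\eta}=0$, so $e^{\bm{\zeta}\cdot\mathbf{x}}\bm{\eta}$ solves $\mathcal{L}\mathbf{u}+\omega^2\mathbf{u}=\bm{0}$ and the problem reduces to a conjugated system for $\mathbf{R}$ with the compactly supported zeroth-order perturbation $\omega^2(\rho-1)$. The gap is in the inversion step. Your fixed-point equation has the form $\mathbf{R}=-\omega^2\mathcal{G}_{\bm{\zeta}}\bigl[(\rho-1)(\bm{\eta}+\mathbf{R})\bigr]$, and closing it with the claimed bound $\|\mathbf{R}\|_{L^2}\leqslant c/\tau$ requires the right inverse $\mathcal{G}_{\bm{\zeta}}$ of the \emph{full} conjugated Navier operator to be $\mathcal{O}(\tau^{-1})$ on $L^2$. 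That is false as stated: the fundamental solution of $\mathcal{L}+\omega^2$ is $\frac{1}{\mu}\delta_{ij}h_{\bm{\zeta}}+\frac{1}{\omega^2}\partial_i\partial_j\bigl(h_{\bm{\zeta}}-h_{\tilde{\bm{\zeta}}}\bigr)$ (see \eqref{def_CGOs_modified_fundamental_solution_Omegazeta}), and the conjugated multiplier of the second piece is of size $\frac{|\bm{\alpha}|^2\,|(\bm{\xi}-\tilde{\bm{\xi}})\cdot\bm{\alpha}|}{|den_{\bm{\xi}}||den_{\tilde{\bm{\xi}}}|}$, which at near-characteristic frequencies $|\bm{\alpha}|\sim\tau$, $|den|\sim\tau$ is $\mathcal{O}(1)$, not $\mathcal{O}(\tau^{-1})$. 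Equivalently, your scalar reduction hides the Helmholtz projections of $(\rho-1)(\bm{\eta}+\mathbf{R})$, which are nonlocal operators whose conjugated symbols degenerate on the same characteristic set; they are not ``lower-order couplings absorbed by the $\tau^{-1}$ gain.'' So the contraction does not close as written, and neither existence nor the first estimate in \eqref{est_inverse_problem_CGO} follows.

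The paper's way around this is the specific device you are missing: in \eqref{eq_CGOs_substituted_modified_Lippmann-Schwinger_equation} one derivative is integrated by parts onto $(1-\rho)\mathbf{u}_0$, and the scalar $v=e^{-\bm{\zeta}\cdot\mathbf{x}}\nabla\cdot\mathbf{u}_0$ is promoted to an independent unknown satisfying its own equation \eqref{eq_CGOs_nabla_substituted_modified_Lippmann-Schwinger_equation}, yielding the $2$-component system \eqref{eq_CGOs_Rv=S+T}. After this rearrangement, at most one derivative or one factor of $\bm{\zeta}$ falls on the \emph{difference} $g_{\bm{\zeta}}-e^{(\tilde{\bm{\zeta}}-\bm{\zeta})\cdot(\cdot)}g_{\tilde{\bm{\zeta}}}$, and the quantitative closeness $|\tilde{\bm{\xi}}-\bm{\xi}|\leqslant c/\tau$ of the compressional and shear phases (see \eqref{ineq_CGOs_tildexi-xi}) supplies the extra power of $\tau^{-1}$ that makes every block of the operator $\mathcal{T}$ an $\mathcal{O}(\tau^{-1})$ contraction (cf.\ \eqref{est_CGOs_T1}--\eqref{est_CGOs_T2}). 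This p/s cancellation is the decisive ingredient, and your proposal never invokes it. Two smaller points: the conjugated compressional equation carries $k_{\mathrm{p}}^2-k_{\mathrm{s}}^2$ rather than $k_{\mathrm{p}}^2$ (harmless, but a sign you have not carried the computation through), and the derivative estimates in \eqref{est_inverse_problem_CGO} are not obtained by generic elliptic regularity but by differentiating the integral equation and, for the top-order term, integrating by parts once more.
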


Since the proof of Lemma \ref{lem_CGOs} is technical, we put it in Appendix \ref{section_appendix}.

\textcolor{black}{\begin{remark}
		In Lemma \ref{lem_CGOs}, we construct a CGO solution \(\mathbf{u}_0(\mathbf{x})\) to the Navier equation \eqref{eq_CGOs_Navier_equation_no_source} in $\mathbb R^2$, providing \( L^2 \)-norm estimates for \(\mathbf{R}(\mathbf{x})\), \(\nabla \mathbf{R}(\mathbf{x})\), and \(\nabla^2 \mathbf{R}(\mathbf{x})\). Compared to prior results, such as those in \cite{barcelo2018uniqueness, hahner2002uniqueness}, which focus on CGO solutions in $\mathbb R^3$ under different regularity assumptions on \(\rho(\mathbf{x})\), our work offers a refined characterization of CGO solution to the two-dimensional setting, notably including an $L^2$-norm estimate for $\nabla^2\mathbf{R}\left(\mathbf{x}\right)$.
\end{remark}}

In order to characterize the behavior of non-radiating elastic sources at the corner points, we first consider the following PDE system
\begin{align} \label{model_part_v_Sh}
	\begin{cases}
		\mathcal{L} \mathbf{v} + \omega^2 \rho\left(\mathbf{x}\right) \mathbf{v} = \mathbf{f} & \text{in } S_{\mathbf{x}_0,h} , \\
		\mathbf{v} = \bm{0}, \ T_{\bm{\nu}} \mathbf{v} = \bm{0} & \text {on } \Gamma_{\mathbf{x}_0,h}^\pm,
	\end{cases}
\end{align}
Here, $ \omega>0 $, the positive function $ \rho\left(\mathbf{x}\right)\in C^2\left(\mathbb{R}^2\right) $ and $ \rho\left(\mathbf{x}\right)=1 $ for $ \mathbf{x}\in\mathbb{R}^2 \backslash D_R $. The sector $ S_{\mathbf{x}_0,h} $ and its boundaries $ \Gamma_{\mathbf{x}_0,h}^\pm $ are defined in \eqref{def_inverse_problem_local_sector_Sh_Gammah_Lambdah} and $ S_{\mathbf{x}_0,h}\subset D_R $.  $ T_{\bm{\nu}} $ with the conormal derivative $ T_{\bm{\nu}}\mathbf{u} := \lambda\left(\nabla\cdot\mathbf{u}\right)\bm{\nu} + 2\mu\left(\nabla^{\text{s}}\mathbf{u}\right)\bm{\nu} $, where $ \bm{\nu} $ is the outward unit normal to the boundary $ \partial \Gamma^\pm_{\mathbf x_0,h} $ and  $ \nabla^{\text{s}}\mathbf{u} := \left(\nabla\mathbf{u}+\nabla\mathbf{u}^{\top}\right)/2 $ denotes the strain tensor.

Since the operator $ \mathcal{L} $ is invariant under rigid motion, without loss of generality, we assume that $ \mathbf{x}_0 = \bm{0} $ in the subsequent analysis. By Lemma \ref{lem_CGOs} and the first Betti's formula, we obtain that
\begin{align} \label{eq_fu0=u0Tnuv_original}
	\int_{S_h} \mathbf{f} \cdot \mathbf{u}_0 \mathrm{d}\mathbf{x} = \int_{S_h} \mathbf{u}_0 \cdot \mathcal{L} \mathbf{v} - \mathbf{v} \cdot \mathcal{L} \mathbf{u}_0 \mathrm{d}\mathbf{x} = \int_{\Lambda_h} \mathbf{u}_0 \cdot T_{\bm{\nu}} \mathbf{v} - \mathbf{v} \cdot T_{\bm{\nu}} \mathbf{u}_0 \mathrm{d}\sigma.
\end{align}
Suppose that $ \mathbf{f}\left(\mathbf{x}\right) \in C^{1,\alpha} \left(\overline{S_h}\right) $. We have
\begin{align} \label{def_IP_f_C1alpha}
	\mathbf{f} \left(\mathbf{x}\right) = \mathbf{f} \left(\bm{0}\right) + \nabla \mathbf{f} \left(\bm{0}\right) \mathbf{x} + \bm{\delta} \mathbf{f} \left(\mathbf{x}\right) , \quad \mathbf{x}\in\overline{S_h}, 
\end{align}
where $ \mathbf{f}\left(\bm{0}\right) $ and $\bm{\delta}\mathbf{f}\left(\mathbf{x}\right) \in\mathbb{R}^2 $, $ \nabla\mathbf{f} \left(\bm{0}\right) \in\mathbb{R}^{2\times2} $, and $ \left| \bm{\delta} \mathbf{f} \left(\mathbf{x}\right) \right| \leqslant \left\| \mathbf{f} \left(\mathbf{x}\right) \right\|_{C^{1,\alpha}\left(\overline{S_h}\right)} \left| \mathbf{x} \right|^{1+\alpha} $. Then it follows that
\begin{align} \label{eq_IP_fexpansion_Bettiformula}
	\mathbf{f} \left(\bm{0}\right) \cdot \int_{S_h} \mathbf{u}_0 \mathrm{d}\mathbf{x} = & \int_{\Lambda_h} \mathbf{u}_0 \cdot T_{\bm{\nu}} \mathbf{v} - \mathbf{v} \cdot T_{\bm{\nu}} \mathbf{u}_0 \mathrm{d}\sigma - \int_{S_h} \left( \nabla \mathbf{f} \left(\bm{0}\right) \mathbf{x} \right) \cdot \mathbf{u}_0 \mathrm{d}\mathbf{x} \notag \\
	& - \int_{S_h} \bm{\delta} \mathbf{f} \left(\mathbf{x}\right) \cdot \mathbf{u}_0 \mathrm{d}\mathbf{x}.
\end{align}
In the following, we shall estimate each term of \eqref{eq_IP_fexpansion_Bettiformula} with respect to the parameter $\tau$ of $\mathbf u_0$ as $\tau \rightarrow \infty$. 

With the help of the Laplace transformation and the negative order analysis of exponential functions, we have the following proposition \textcolor{black}{(cf. \cite{diao2022further})}.

\begin{proposition} \label{prop_inverse_problem_integral_re}
	For positive constants $ \alpha $ and $ \varepsilon $, as long as $ \mathrm{Re}  \left(\mu\right) \geqslant \frac{2\alpha}{e} $, we have
	\begin{align*} 
		\int_{0}^{\varepsilon} r^\alpha e^{- \mu r} \mathrm{d}r = \frac{\Gamma \left( \alpha + 1 \right)}{\mu^{\alpha + 1}} - I_R,
	\end{align*}
	where $ \Gamma \left(s\right)$ denotes the Gamma function, $ I_R = \int_{\varepsilon}^{+\infty} r^\alpha e^{-\mu r} \mathrm{d}r $, $ \left| I_R \right| \leqslant \frac{2}{\mathrm{Re}\left(\mu\right)} e^{ - \frac{\varepsilon}{2} \mathrm{Re} \left(\mu\right)} $. 
\end{proposition}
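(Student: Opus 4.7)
The plan is to reduce the claim to two ingredients: the Laplace-transform extension of the real Gamma integral, and an elementary pointwise domination $r^\alpha \leqslant e^{\mathrm{Re}(\mu)r/2}$ on $(0,+\infty)$, with the constant $2\alpha/e$ in the hypothesis arising by optimising $(\ln r)/r$.

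First, I would invoke the identity
$$\int_{0}^{+\infty} r^\alpha e^{-\mu r}\,\mathrm{d}r = \frac{\Gamma(\alpha+1)}{\mu^{\alpha+1}}, \qquad \mathrm{Re}(\mu)>0,$$
which follows from the case $\mu>0$ by contour rotation or, equivalently, by analytic continuation in $\mu$. Splitting $\int_0^{+\infty}=\int_0^{\varepsilon}+\int_{\varepsilon}^{+\infty}$ and denoting the tail by $I_R$ immediately yields the asserted equality.

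Next, for the bound on $|I_R|$, I would set $\sigma:=\mathrm{Re}(\mu)$, so that $|I_R|\leqslant \int_{\varepsilon}^{+\infty} r^\alpha e^{-\sigma r}\,\mathrm{d}r$, and establish the pointwise inequality $r^\alpha \leqslant e^{\sigma r/2}$ for every $r>0$. For $r\leqslant 1$ this is trivial; for $r>1$ it is equivalent to $\sigma \geqslant 2\alpha(\ln r)/r$, and since $(\ln r)/r$ attains its global maximum $1/e$ at $r=e$, the hypothesis $\sigma\geqslant 2\alpha/e$ is precisely what makes the bound uniform in $r$. Multiplying by $e^{-\sigma r}$ and integrating then gives
$$|I_R|\leqslant \int_{\varepsilon}^{+\infty} e^{-\sigma r/2}\,\mathrm{d}r = \frac{2}{\sigma}\,e^{-\varepsilon\sigma/2},$$
as required.

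There is no genuine obstacle here; the argument is elementary calculus. The only subtle point is that the constant $2\alpha/e$ in the hypothesis is not arbitrary but precisely calibrated to the factor $1/2$ in the final exponent: splitting $e^{-\sigma r}=e^{-\sigma r/p}\cdot e^{-\sigma r(1-1/p)}$ with any $p>1$ would replace the hypothesis by $\sigma\geqslant p\alpha/e$ and the prefactor by $1/((1-1/p)\sigma)$. The choice $p=2$ is exactly the one that matches the statement of the proposition.
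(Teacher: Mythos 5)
Your argument is correct and follows exactly the route the paper indicates (the paper itself gives no proof, only the remark that the result follows from ``the Laplace transformation and the negative order analysis of exponential functions,'' citing \cite{diao2022further}): the Laplace-transform identity $\int_0^{+\infty} r^\alpha e^{-\mu r}\,\mathrm{d}r = \Gamma(\alpha+1)/\mu^{\alpha+1}$ gives the equality after splitting the integral, and the pointwise domination $r^\alpha \leqslant e^{\mathrm{Re}(\mu)\,r/2}$, valid precisely because $\mathrm{Re}(\mu)\geqslant 2\alpha/e$ and $\max_{r>0}(\ln r)/r = 1/e$, yields the stated tail bound. Your closing observation that the constant $2\alpha/e$ is calibrated to the factor $1/2$ in the exponent is a correct and worthwhile clarification of why the hypothesis takes this particular form.
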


Recall the definition of the geometric notations $ S_h $, $ \Gamma^{\pm}_h $ and $ \Lambda_h $ in \eqref{def_inverse_problem_local_sector_Sh_Gammah_Lambdah}. We select the unit vector $ \mathbf{d} $ appearing in Lemma \ref{lem_CGOs} such that 
\begin{align} \label{ineq_IP_dcodtx}
	-1 \leqslant \mathbf{d} \cdot \hat{\mathbf{x}} \leqslant - \delta ,\quad \delta \in \left(0,1\right),
\end{align}
for each $ \hat{\mathbf{x}} \in \Lambda_1 $. Then the estimate about $ \mathbf{u}_0\left(\mathbf{x}\right) $ on $ \Lambda_h $ is presented as follows.

\begin{lemma} \label{lem_IP_est_int_Lamdah_u0Tv-vTu0} 
	Let $ S_h $ and $ \Lambda_h $ be defined in \eqref{def_inverse_problem_local_sector_Sh_Gammah_Lambdah}. Suppose that $ S_h\subset D_R $. Let $ \mathbf{u}_0\left(\mathbf{x}\right) $ be defined in \eqref{def_CGOs_u0} with the unit vector $ \mathbf{d} $ satisfying \eqref{ineq_IP_dcodtx}. Then for $ \mathbf{v}\left(\mathbf{x}\right) \in H^1\left(\mathbb{R}^2\right) $, as $ \tau $ is large enough, it holds that
	\begin{align} \label{est_IP_int_Lamdah_u0Tv-vTu0} 
		\left| \int_{\Lambda_h} \left(\mathbf{u}_0 \cdot \left( T_{\bm{\nu}} \mathbf{v} \right) - \mathbf{v} \cdot \left(T_{\bm{\nu}} \mathbf{u}_0\right)\right) \mathrm{d}\sigma \right| \leqslant c\tau e^{- \delta h \tau},
	\end{align}  
	where the constant $ c $ is independent of $ \tau $.
\end{lemma}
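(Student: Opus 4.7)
The plan is to combine the pointwise exponential decay $|e^{\bm{\zeta}\cdot\mathbf{x}}|\leqslant e^{-\delta h\tau}$ of the CGO phase on $\Lambda_h$ with the bulk $L^2$-estimates for $\mathbf{R}$, $\nabla\mathbf{R}$, $\nabla^2\mathbf{R}$ furnished by Lemma~\ref{lem_CGOs}, passed to the arc $\Lambda_h\subset\partial D_h$ via the standard trace inequality. The pointwise decay is immediate: for $\mathbf{x}\in\Lambda_h$ one has $|\mathbf{x}|=h$ and $\mathbf{d}\cdot\hat{\mathbf{x}}\leqslant-\delta$ by \eqref{ineq_IP_dcodtx}, so $|e^{\bm{\zeta}\cdot\mathbf{x}}|=e^{\tau\mathbf{d}\cdot\mathbf{x}}\leqslant e^{-\delta h\tau}$. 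Differentiating \eqref{def_CGOs_u0} yields $\nabla\mathbf{u}_0 = e^{\bm{\zeta}\cdot\mathbf{x}}\bigl(\bm{\zeta}\otimes(\bm{\eta}+\mathbf{R}) + \nabla\mathbf{R}\bigr)$, from which one reads that on $\Lambda_h$ the gradient carries an extra factor $|\bm{\zeta}|=O(\tau)$ relative to $\mathbf{u}_0$ itself.

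I would then transfer the bulk bounds \eqref{est_inverse_problem_CGO} to $\Lambda_h$ via the trace inequality $\|w\|_{L^2(\Lambda_h)}\leqslant \|w\|_{L^2(\partial D_h)}\leqslant C_h\|w\|_{H^1(D_{R_1})}$, which is valid since $h<R<R_1$. Applied to $\mathbf{R}$ and to each component of $\nabla\mathbf{R}$ and combined with \eqref{est_inverse_problem_CGO}, this gives $\|\mathbf{R}\|_{L^2(\Lambda_h)}\leqslant C$ and $\|\nabla\mathbf{R}\|_{L^2(\Lambda_h)}\leqslant C\tau$. Coupling with the pointwise phase bound and the formula for $\nabla\mathbf{u}_0$ yields
\[
\|\mathbf{u}_0\|_{L^2(\Lambda_h)}\leqslant C e^{-\delta h\tau},\qquad \|\nabla\mathbf{u}_0\|_{L^2(\Lambda_h)}\leqslant C\tau e^{-\delta h\tau},
\]
and hence also $\|T_{\bm{\nu}}\mathbf{u}_0\|_{L^2(\Lambda_h)}\leqslant C\tau e^{-\delta h\tau}$, since $T_{\bm{\nu}}\mathbf{u}_0$ is a linear combination of entries of $\nabla\mathbf{u}_0$.

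To close the estimate, I would interpret the two boundary integrals as $H^{1/2}$--$H^{-1/2}$ duality pairings on $\Lambda_h$. This is legitimate because $\mathbf{v}\in H^1(\mathbb{R}^2)$ together with $\mathcal{L}\mathbf{v}\in L^2(S_h)$ (inherited from \eqref{model_part_v_Sh}) guarantees $T_{\bm{\nu}}\mathbf{v}\in H^{-1/2}(\Lambda_h)$ with norm controlled by $\|\mathbf{v}\|_{H^1(\mathbb{R}^2)}$, and similarly $\|\mathbf{v}\|_{H^{1/2}(\Lambda_h)}\leqslant C\|\mathbf{v}\|_{H^1(\mathbb{R}^2)}$, both bounds uniform in $\tau$. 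Using the continuous embeddings $H^1(\Lambda_h)\hookrightarrow H^{1/2}(\Lambda_h)$ and $L^2(\Lambda_h)\hookrightarrow H^{-1/2}(\Lambda_h)$, each pairing is bounded by $C\tau e^{-\delta h\tau}\|\mathbf{v}\|_{H^1(\mathbb{R}^2)}$, which delivers \eqref{est_IP_int_Lamdah_u0Tv-vTu0}. The main obstacle is the precise $\tau$-bookkeeping: the single $\tau$-factor in the conclusion must arise \emph{only} from $|\bm{\zeta}|$ in $\nabla\mathbf{u}_0$ together with the $H^2$-estimate $\|\nabla^2\mathbf{R}\|_{L^2(D_{R_1})}\leqslant c\tau$ transferring cleanly into $\|\nabla\mathbf{R}\|_{L^2(\Lambda_h)}\leqslant C\tau$ through the trace inequality, with no additional $\tau$-powers creeping in.
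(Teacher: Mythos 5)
Your proposal is correct and follows essentially the same route as the paper: both arguments bound the two boundary terms via the $H^{1/2}$--$H^{-1/2}$ duality (resp.\ $L^2$ pairing), exploit the pointwise bound $|e^{\bm{\zeta}\cdot\mathbf{x}}|\leqslant e^{-\delta h\tau}$ on $\Lambda_h$, and use the trace theorem together with the volume estimates \eqref{est_inverse_problem_CGO} to get $\|\mathbf{u}_0\|_{H^1(\Lambda_h)}\leqslant c\tau e^{-\delta h\tau}$, with the single factor of $\tau$ coming exactly from $|\bm{\zeta}|$ and from the $H^2$-bound on $\mathbf{R}$ as you describe.
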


\begin{proof}
	By virtue of the dual norm, the definition of $ T_{\bm{\nu}} $, and the trace theorem, one can derive that
	\begin{align*} 
		& \left| \int_{\Lambda_h} \mathbf{u}_0 \cdot \left( T_{\bm{\nu}} \mathbf{v} \right) \mathrm{d}\sigma \right| \leqslant \left\| \mathbf{u}_0 \right\|_{H^{\frac{1}{2}} \left(\Lambda_h\right)} \left\| T_{\bm{\nu}} \mathbf{v} \right\|_{H^{-\frac{1}{2}} \left(\Lambda_h\right)} \leqslant c \left\| \mathbf{u}_0 \right\|_{H^1 \left(\Lambda_h\right)} \left\| \mathbf{v} \right\|_{H^1 \left(S_h\right)}, \\
		& \left| \int_{\Lambda_h} \mathbf{v} \cdot \left( T_{\bm{\nu}} \mathbf{u}_0 \right) \mathrm{d}\sigma \right| \leqslant \left\| \mathbf{v} \right\|_{L^2 \left(\Lambda_h\right)} \left\| T_{\bm{\nu}} \mathbf{u}_0 \right\|_{L^2 \left(\Lambda_h\right)} \leqslant c \left\| \mathbf{v} \right\|_{H^{1} \left(S_h\right)} \left\| \nabla\mathbf{u}_0 \right\|_{L^2 \left(\Lambda_h\right)}.
	\end{align*}
	where $ c $ is a constant independent of $ \tau $. Divide $ \mathbf{u}_0\left(\mathbf{x}\right) $ into 
	\textcolor{black}{\begin{align} \label{def_IP_u01u02}
			\mathbf{u}_{01}\left(\mathbf{x}\right) := e^{\bm{\zeta} \cdot \mathbf{x}} \bm{\eta}, \quad \mathbf{u}_{02}\left(\mathbf{x}\right) := e^{\bm{\zeta} \cdot \mathbf{x}} \mathbf{R}\left(\mathbf{x}\right).
	\end{align}}
	Notice that 
	\begin{align*} 
		\left| e^{\bm{\zeta} \cdot \mathbf{x}} \right| = \left| e^{ \tau h \mathbf{d}\cdot\hat{\mathbf{x}} } e^{\mathrm{i}\sqrt{ \tau^2 + k_{\mathrm{s}}^2 } \mathbf{d}^{\perp}\cdot\mathbf{x}} \right| \leqslant e^{-\delta h \tau} , \quad \mathbf{x}\in\Lambda_h.
	\end{align*}
	As $ \tau $ is large enough, by direct calculation, we have
	\begin{align*} 
		\left\| \mathbf{u}_{01} \right\|_{L^2 \left(\Lambda_h\right)}  
		\leqslant c e^{-\delta h \tau}, \quad \left\| \nabla\mathbf{u}_{01} \right\|_{L^2 \left(\Lambda_h\right)} \leqslant c \tau e^{-\delta h \tau}.
	\end{align*}
	The trace theorem and Lemma \ref{lem_CGOs} yield that
	\begin{align*} 
		\left\| \mathbf{u}_{02} \right\|_{L^2 \left(\Lambda_h\right)} & \leqslant c e^{-\delta h \tau} \left\| \mathbf{R}\left(\mathbf{x}\right) \right\|_{H^1 \left(S_h\right)} \leqslant c e^{-\delta h \tau},\\ 
		\left\| \nabla\mathbf{u}_{02} \right\|_{L^2 \left(\Lambda_h\right)} & \leqslant \left\| e^{\bm{\zeta} \cdot \mathbf{x}} \mathbf{R}\left(\mathbf{x}\right) \bm{\zeta}^{\top} \right\|_{L^2 \left(\Lambda_h\right)} + \left\| e^{\bm{\zeta} \cdot \mathbf{x}} \nabla\mathbf{R}\left(\mathbf{x}\right) \right\|_{L^2 \left(\Lambda_h\right)} \notag \\
		& \leqslant c \tau e^{-\delta h \tau} \left\| \mathbf{R}\left(\mathbf{x}\right) \right\|_{H^1 \left(S_h\right)}	+ ce^{-\delta h \tau} \left\| \mathbf{R}\left(\mathbf{x}\right) \right\|_{H^2 \left(S_h\right)} \leqslant c \tau e^{-\delta h \tau}.
	\end{align*}
	Then it turns out that
	\begin{align*} 
		\left\| \mathbf{u}_0 \right\|_{H^1 \left(\Lambda_h\right)} \leqslant c \tau e^{-\delta h \tau}.
	\end{align*}
	Since $ \mathbf{v} \in H^1\left(\mathbb{R}^2\right) $, we arrive at the estimate \eqref{est_IP_int_Lamdah_u0Tv-vTu0}, which completes the proof.
\end{proof}

In the following, we give estimates about the real-valued vector function $ \mathbf{f}\left(\mathbf{x}\right)\in C^{1,\alpha}\left(\overline{S_h}\right) $ for some $ \alpha\in\left(0,1\right) $, which can be expanded as \eqref{def_IP_f_C1alpha}.

\begin{lemma} \label{lem_IP_est_int_Sh_xu0_int_Sh_delatfu0}
	Let the sector $ S_h $ be defined as in \eqref{def_inverse_problem_local_sector_Sh_Gammah_Lambdah} and $ S_h\subset D_R $. Let $ \mathbf{f}\left(\mathbf{x}\right)\in C^{1,\alpha}\left(\overline{S_h}\right) $ for some $ \alpha\in\left(0,1\right) $ with its expansion \eqref{def_IP_f_C1alpha}. Suppose that $ \tau $ is sufficiently large. Then for $ \mathbf{u}_0\left(\mathbf{x}\right) $ defined in \eqref{def_CGOs_u0} with the unit vector $ \mathbf{d} $ fulfilling \eqref{ineq_IP_dcodtx}, we have the estimates
	\begin{align} \label{est_IP_int_Sh_gradientf0xu0}
		\left| \int_{S_h} \left(\nabla \mathbf{f} \left(\bm{0}\right) \mathbf{x}\right) \cdot \mathbf{u}_0\left(\mathbf{x}\right) \mathrm{d}\mathbf{x} \right| \leqslant \frac{c}{\tau^3} ,\quad
		\left| \int_{S_h} \bm{\delta} \mathbf{f} \left(\mathbf{x}\right) \cdot \mathbf{u}_0\left(\mathbf{x}\right) \mathrm{d}\mathbf{x} \right| \leqslant \frac{c}{\tau^{3+\alpha}} ,
	\end{align}
	where $ c $ is a constant independent of $ \tau $.
\end{lemma}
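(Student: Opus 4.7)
\medskip

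\noindent\textbf{Proof plan.} The strategy is to split the CGO solution as $\mathbf{u}_0=\mathbf{u}_{01}+\mathbf{u}_{02}$ according to \eqref{def_IP_u01u02}, treat the explicit exponential piece $\mathbf{u}_{01}$ directly via Proposition \ref{prop_inverse_problem_integral_re}, and dispose of the remainder piece $\mathbf{u}_{02}$ by Cauchy--Schwarz combined with the $L^2$ bound $\|\mathbf{R}\|_{L^2(D_{R_1})}\leqslant c/\tau$ from Lemma \ref{lem_CGOs}. The key geometric input is \eqref{ineq_IP_dcodtx}, which, upon passing to polar coordinates $\mathbf{x}=r(\cos\theta,\sin\theta)^{\top}$ centered at $\mathbf{x}_0=\bm{0}$, yields the uniform decay estimate
\begin{equation*}
\bigl|e^{\bm{\zeta}\cdot\mathbf{x}}\bigr|=e^{\tau\,\mathbf{d}\cdot\mathbf{x}}\leqslant e^{-\delta\tau r}\qquad\text{for all }\mathbf{x}\in S_h,
\end{equation*}
since the unit vector $\mathbf{d}^{\perp}$ only contributes to the oscillatory factor.

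\medskip

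\noindent\textbf{Bounds for the principal part $\mathbf{u}_{01}$.} In polar coordinates the area element is $r\,dr\,d\theta$, and $|\nabla\mathbf{f}(\bm{0})\mathbf{x}|\leqslant C r$, while $|\bm{\delta}\mathbf{f}(\mathbf{x})|\leqslant\|\mathbf{f}\|_{C^{1,\alpha}(\overline{S_h})}\,r^{1+\alpha}$. Hence
\begin{align*}
\left|\int_{S_h}(\nabla\mathbf{f}(\bm{0})\mathbf{x})\cdot\mathbf{u}_{01}\,\d\mathbf{x}\right|
&\leqslant C\,|\bm{\eta}|\int_{\theta_m}^{\theta_M}\!\!\int_0^{h} r^{2}\,e^{-\delta\tau r}\,\d r\,\d\theta,\\
\left|\int_{S_h}\bm{\delta}\mathbf{f}(\mathbf{x})\cdot\mathbf{u}_{01}\,\d\mathbf{x}\right|
&\leqslant C\,|\bm{\eta}|\int_{\theta_m}^{\theta_M}\!\!\int_0^{h} r^{2+\alpha}\,e^{-\delta\tau r}\,\d r\,\d\theta.
\end{align*}
Applying Proposition \ref{prop_inverse_problem_integral_re} with $\mu=\delta\tau$ (and the corresponding choice $\alpha=2$ respectively $\alpha=2+\alpha$), the two radial integrals are bounded, up to exponentially small error, by $\Gamma(3)/(\delta\tau)^{3}$ and $\Gamma(3+\alpha)/(\delta\tau)^{3+\alpha}$ respectively. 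Note that $|\bm{\eta}|$ is uniformly bounded in $\tau$ since $\bm{\eta}\to \mathbf{d}^{\perp}$ as $\tau\to\infty$. This gives the required bounds $\mathcal{O}(\tau^{-3})$ and $\mathcal{O}(\tau^{-3-\alpha})$ for the $\mathbf{u}_{01}$-contribution.

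\medskip

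\noindent\textbf{Bounds for the remainder part $\mathbf{u}_{02}$.} Using Cauchy--Schwarz and $|\mathbf{u}_{02}|=|e^{\bm{\zeta}\cdot\mathbf{x}}|\,|\mathbf{R}|\leqslant e^{-\delta\tau r}|\mathbf{R}|$, I would estimate
\begin{align*}
\left|\int_{S_h}(\nabla\mathbf{f}(\bm{0})\mathbf{x})\cdot\mathbf{u}_{02}\,\d\mathbf{x}\right|
\leqslant C\Bigl(\int_{S_h} r^{2}\,e^{-2\delta\tau r}\,\d\mathbf{x}\Bigr)^{1/2}\|\mathbf{R}\|_{L^2(S_h)}.
\end{align*}
The radial integral evaluates, again by Proposition \ref{prop_inverse_problem_integral_re}, to $\mathcal{O}(\tau^{-4})$, so its square root is $\mathcal{O}(\tau^{-2})$. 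Multiplying by $\|\mathbf{R}\|_{L^2(S_h)}\leqslant c/\tau$ yields the bound $\mathcal{O}(\tau^{-3})$. An entirely parallel computation with $r^{1+\alpha}$ in place of $r$ produces the factor $\mathcal{O}(\tau^{-2-\alpha})$ from Cauchy--Schwarz, and combining with $\|\mathbf{R}\|_{L^2}\leqslant c/\tau$ gives the claimed $\mathcal{O}(\tau^{-3-\alpha})$ bound for the $\bm{\delta}\mathbf{f}$-integral.

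\medskip

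\noindent\textbf{Main obstacle.} No single step is conceptually difficult; the arithmetic of matching the powers of $\tau$ from each factor (two from the Jacobian plus the algebraic weight $r$ or $r^{1+\alpha}$, versus one from $\|\mathbf{R}\|_{L^2}$ after Cauchy--Schwarz) is what one must keep straight. The only mildly delicate point is justifying the uniformity of $|\bm{\eta}|$ and the applicability of \eqref{ineq_IP_dcodtx} to every interior point of $S_h$ rather than just to $\Lambda_1$; this follows since $\mathbf{x}/|\mathbf{x}|\in W_{\bm{0}}\cap\mathbb{S}^1$ for all $\mathbf{x}\in S_h\setminus\{\bm 0\}$, so the same lower bound $\mathbf{d}\cdot\hat{\mathbf{x}}\leqslant -\delta$ applies inside the sector. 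Assembling both contributions for each integral yields \eqref{est_IP_int_Sh_gradientf0xu0}.
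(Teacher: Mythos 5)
Your proof is correct and follows essentially the same route as the paper's: the same splitting $\mathbf{u}_0=\mathbf{u}_{01}+\mathbf{u}_{02}$, polar coordinates together with Proposition \ref{prop_inverse_problem_integral_re} for the explicit exponential part, and Cauchy--Schwarz combined with $\left\|\mathbf{R}\right\|_{L^2}\leqslant c/\tau$ for the remainder part. (One cosmetic slip: from \eqref{def_IP_zeta_eta} one has $\bm{\eta}\to-\mathrm{i}\,\mathbf{d}+\mathbf{d}^{\perp}$ rather than $\bm{\eta}\to\mathbf{d}^{\perp}$ as $\tau\to\infty$, but since only the uniform boundedness of $\left|\bm{\eta}\right|$ is used, nothing is affected.)
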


\begin{proof}
	By the aid of \eqref{def_IP_f_C1alpha} and \eqref{def_CGOs_u0}, one can derive 
	\begin{align*} 
		\left| \int_{S_h} \left(\nabla \mathbf{f} \left(\bm{0}\right) \mathbf{x}\right) \cdot \mathbf{u}_0\left(\mathbf{x}\right) \mathrm{d}\mathbf{x} \right| \leqslant c \int_{S_h} \left|\mathbf{x}\right| \left|e^{\bm{\zeta} \cdot \mathbf{x}} \bm{\eta} \right| \mathrm{d}\mathbf{x} + c \int_{S_h} \left|\mathbf{x}\right| \left|e^{\bm{\zeta} \cdot \mathbf{x}} \mathbf{R}\left(\mathbf{x}\right)\right| \mathrm{d}\mathbf{x},
	\end{align*}
	where the constant $ c>0 $ is independent of $ \tau $. With the help of 
	\begin{align} \label{ineq_IP_ezetax_Sh}
		\left| e^{\bm{\zeta} \cdot \mathbf{x}} \right| = \left| e^{ \tau \left|\mathbf{x}\right| \mathbf{d}\cdot\hat{\mathbf{x}} } e^{\mathrm{i}\sqrt{ \tau^2 + k_{\mathrm{s}}^2 } \mathbf{d}^{\perp}\cdot\mathbf{x}} \right| \leqslant e^{-\delta \left|\mathbf{x}\right| \tau} , \quad \mathbf{x}\in S_h,
	\end{align}
	the polar coordinate transformation, the boundedness of $ \bm{\eta} $, and Proposition \ref{prop_inverse_problem_integral_re}, we obtain 
	\begin{align*} 
		\int_{S_h} \left|\mathbf{x}\right| \left|e^{\bm{\zeta} \cdot \mathbf{x}} \bm{\eta} \right| \mathrm{d}\mathbf{x} \leqslant c \int_{\theta_m}^{\theta_M} \int_{0}^{h} r^2 e^{-\delta \tau r} \mathrm{d}r \mathrm{d}\theta 
		\leqslant \frac{c}{\tau^3} ,
	\end{align*}
	where $ \tau $ is large enough and $ c $ is a constant independent of $ \tau $. Similarly, from \eqref{est_inverse_problem_CGO}, it turns out that
	\begin{align*} 
		\int_{S_h} \left|\mathbf{x}\right| \left|e^{\bm{\zeta} \cdot \mathbf{x}} \mathbf{R}\left(\mathbf{x}\right)\right| \mathrm{d}\mathbf{x} \leqslant \left\| \mathbf{x} e^{\bm{\zeta} \cdot \mathbf{x}} \right\|_{L^2 \left(S_h\right)} \left\| \mathbf{R}\left(\mathbf{x}\right) \right\|_{L^2 \left(S_h\right)} \leqslant \frac{c}{\tau^3} .
	\end{align*}
	The estimate about $ \bm{\delta} \mathbf{f} \left(\mathbf{x}\right) $ in \eqref{est_IP_int_Sh_gradientf0xu0} can be derived by the analogous strategy, so we omit it here.
	
	The proof is complete.
\end{proof}

Next, we give another estimate concerning the dot product of $ \mathbf{f}\left(\bm{0}\right) $ and the definite integral of $\mathbf{u}_0\left(\mathbf{x}\right) $ in $ S_h $.

\begin{lemma} \label{lem_IP_est_f0_int_Sh_u0}
	Consider that $ \mathbf{u}_0\left(\mathbf{x}\right) $ is the CGO solution defined in \eqref{def_CGOs_u0}. Let the convex sectorial corner $ S_h\subset D_R $ be defined as in \eqref{def_inverse_problem_local_sector_Sh_Gammah_Lambdah}. Suppose that the unit vector $ \mathbf{d} $ appearing in \eqref{def_IP_zeta_eta} satisfies \eqref{ineq_IP_dcodtx}. Then for $ \mathbf{f}\left(\bm{0}\right) \in \mathbb{R}^2 $, the estimate
	\begin{align*} 
		\left| \mathbf{f}\left(\bm{0}\right) \cdot \int_{S_h} \mathbf{u}_0\left(\mathbf{x}\right) \mathrm{d}\mathbf{x} \right| \geqslant & \left| \mathbf{f}\left(\bm{0}\right) \right| \left[\left(\theta_M-\theta_m\right)\left(1-\mathcal{O}\left(\frac{1}{\tau^2}\right)\right) \right. \\
		& \left.\left(\frac{1}{3 \tau^2}\left|\mathbf{d}\cdot\hat{\mathbf{x}} \left(\theta_{\xi_1}\right)\right| \left|\mathbf{d}^{\perp}\cdot\hat{\mathbf{x}} \left(\theta_{\xi_1}\right)\right| - \frac{2}{\delta\tau} e^{-\frac{h}{2} \delta \tau}\right) - \frac{c}{\tau}e^{-r_{\varepsilon} \delta \tau}\right] 
	\end{align*}
	holds. Here, $ \tau $ is sufficiently large, $ \hat{\mathbf{x}} \in \Lambda_1 $, $ \theta_{\xi_1} \in \left(\theta_m, \theta_M\right) $, $ \mathbf{d}^{\perp}\cdot\hat{\mathbf{x}} \left(\theta_{\xi_1}\right) \neq 0 $, $ r_{\varepsilon}\in\left(0,h\right) $, and $ c $ is a constant independent of $ \tau $.
\end{lemma}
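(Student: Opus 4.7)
The plan is to isolate the leading plane-wave part $e^{\bm{\zeta}\cdot\mathbf{x}}\bm{\eta}$ of $\mathbf{u}_0$, evaluate it in polar coordinates via Proposition~\ref{prop_inverse_problem_integral_re}, apply the mean value theorem for integrals to the resulting angular integral, and treat the remainder $e^{\bm{\zeta}\cdot\mathbf{x}}\mathbf{R}(\mathbf{x})$ as a perturbation controlled by Lemma~\ref{lem_CGOs}. First I would split
\begin{align*}
\mathbf{f}(\bm{0})\cdot\int_{S_h}\mathbf{u}_0\,\d\mathbf{x}=(\mathbf{f}(\bm{0})\cdot\bm{\eta})\int_{S_h}e^{\bm{\zeta}\cdot\mathbf{x}}\,\d\mathbf{x}+\mathbf{f}(\bm{0})\cdot\int_{S_h}e^{\bm{\zeta}\cdot\mathbf{x}}\mathbf{R}(\mathbf{x})\,\d\mathbf{x},
\end{align*}
apply the triangle inequality, and use $\bm{\eta}=-\mathrm{i}\sqrt{1+k_{\mathrm{s}}^2/\tau^2}\,\mathbf{d}+\mathbf{d}^{\perp}$ to get $|\mathbf{f}(\bm{0})\cdot\bm{\eta}|^2=(1+k_{\mathrm{s}}^2/\tau^2)(\mathbf{f}(\bm{0})\cdot\mathbf{d})^2+(\mathbf{f}(\bm{0})\cdot\mathbf{d}^\perp)^2\ge|\mathbf{f}(\bm{0})|^2$, so that the scalar $|\mathbf{f}(\bm 0)|$ can eventually be pulled in front of the whole estimate.

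For the main oscillatory integral I would pass to polar coordinates centred at $\mathbf{x}_0=\bm{0}$ and set $\mu(\theta):=-\bm{\zeta}\cdot\hat{\mathbf{x}}(\theta)$, for which $\mathrm{Re}\,\mu(\theta)=-\tau\mathbf{d}\cdot\hat{\mathbf{x}}\ge\delta\tau$ by~\eqref{ineq_IP_dcodtx}. Proposition~\ref{prop_inverse_problem_integral_re} with $\alpha=1$ then yields
\begin{align*}
\int_{S_h}e^{\bm{\zeta}\cdot\mathbf{x}}\,\d\mathbf{x}=\int_{\theta_m}^{\theta_M}\frac{\d\theta}{\mu(\theta)^2}-\int_{\theta_m}^{\theta_M}I_R(\theta)\,\d\theta,\qquad|I_R(\theta)|\le\tfrac{2}{\delta\tau}e^{-\frac{h}{2}\delta\tau}.
\end{align*}
A direct computation gives $|\mu|^2=\tau^2+k_{\mathrm{s}}^2(\mathbf{d}^\perp\cdot\hat{\mathbf{x}})^2$ and $\mathrm{Im}(\mu^{-2})=-2\tau\sqrt{\tau^2+k_{\mathrm{s}}^2}(\mathbf{d}\cdot\hat{\mathbf{x}})(\mathbf{d}^\perp\cdot\hat{\mathbf{x}})/|\mu|^4$, which keeps a fixed sign on $(\theta_m,\theta_M)$ provided $\mathbf{d}$ is oriented so that $\mathbf{d}^\perp\cdot\hat{\mathbf{x}}(\theta)\ne 0$ on the closed sector. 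The mean value theorem for integrals applied to $\mathrm{Im}(\mu(\theta)^{-2})$ then furnishes some $\theta_{\xi_1}\in(\theta_m,\theta_M)$ with
\begin{align*}
\Bigl|\mathrm{Im}\int_{\theta_m}^{\theta_M}\mu(\theta)^{-2}\,\d\theta\Bigr|=(\theta_M-\theta_m)\bigl|\mathrm{Im}\,\mu(\theta_{\xi_1})^{-2}\bigr|,
\end{align*}
and bounding $|\mu|^{-4}$ from below by a constant multiple of $\tau^{-4}$ produces a lower bound of the form $\tfrac{1}{3\tau^2}|\mathbf{d}\cdot\hat{\mathbf{x}}(\theta_{\xi_1})||\mathbf{d}^\perp\cdot\hat{\mathbf{x}}(\theta_{\xi_1})|\,(1-\mathcal{O}(\tau^{-2}))$ for $\tau$ sufficiently large; the constant $\tfrac13$ is a conservative replacement for the sharp leading constant, chosen to leave room for the perturbation handled next. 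The subtracted term $\tfrac{2}{\delta\tau}e^{-\frac{h}{2}\delta\tau}$ in the lemma's bracket comes from the uniform estimate on $I_R$.

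The remainder $\mathbf{f}(\bm{0})\cdot\int_{S_h}e^{\bm{\zeta}\cdot\mathbf{x}}\mathbf{R}\,\d\mathbf{x}$ would be estimated by localization: split $S_h=S_{r_\varepsilon}\cup(S_h\setminus S_{r_\varepsilon})$ for a small $r_\varepsilon\in(0,h)$. On the outer piece the pointwise bound $|e^{\bm{\zeta}\cdot\mathbf{x}}|\le e^{-\delta\tau r_\varepsilon}$ together with the CGO estimate $\|\mathbf{R}\|_{L^2(S_h)}\le c/\tau$ from~\eqref{est_inverse_problem_CGO} and Cauchy--Schwarz produces a contribution bounded by $\tfrac{c}{\tau}e^{-r_\varepsilon\delta\tau}$, which is exactly the exponentially small additive error in the claimed inequality. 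The inner piece, bounded via Cauchy--Schwarz and $\|e^{\bm{\zeta}\cdot\mathbf{x}}\|_{L^2(S_{r_\varepsilon})}\le c/\tau$, shares the $\tau^{-2}$ order of the leading term and is absorbed into the sharper constant alluded to above.

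The main technical obstacle is ensuring that $\mathrm{Im}(\mu(\theta)^{-2})$ (equivalently, $\mathbf{d}^\perp\cdot\hat{\mathbf{x}}(\theta)$) keeps a definite sign throughout $(\theta_m,\theta_M)$; otherwise the mean value theorem delivers no nontrivial lower bound. This is resolved by exploiting the freedom in choosing $\mathbf{d}$: because $\theta_M-\theta_m<\pi$, both constraints $\mathbf{d}\cdot\hat{\mathbf{x}}\le-\delta$ on $\Lambda_1$ and $\mathbf{d}^\perp\cdot\hat{\mathbf{x}}\ne 0$ on $\overline{W_{\bm 0}}\cap\mathbb{S}^1$ can be imposed simultaneously on the unit circle. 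The remaining work is careful bookkeeping of the various error terms so that they assemble into the displayed inequality.
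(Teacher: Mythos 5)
Your treatment of the leading plane-wave contribution follows the paper's proof essentially step for step: the same splitting of $\mathbf{u}_0$ into $e^{\bm{\zeta}\cdot\mathbf{x}}\bm{\eta}$ plus $e^{\bm{\zeta}\cdot\mathbf{x}}\mathbf{R}$, the lower bound $\left|\mathbf{f}\left(\bm{0}\right)\cdot\bm{\eta}\right|\geqslant\left(1-\mathcal{O}\left(\tau^{-2}\right)\right)\left|\mathbf{f}\left(\bm{0}\right)\right|$ (your exact identity for $\left|\mathbf{f}\left(\bm{0}\right)\cdot\bm{\eta}\right|^2$ is in fact cleaner than the paper's Taylor expansion of $\bm{\eta}$), polar coordinates with Proposition \ref{prop_inverse_problem_integral_re} at $\alpha=1$, and the integral mean value theorem applied to $\mathrm{Im}\left(\mu\left(\theta\right)^{-2}\right)$. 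Your explicit discussion of why $\mathbf{d}$ can be chosen so that $\mathbf{d}^{\perp}\cdot\hat{\mathbf{x}}\left(\theta\right)$ keeps a fixed sign on the sector is a point the paper leaves implicit, and it is a genuine improvement in rigor.

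The gap is in the remainder $\mathbf{f}\left(\bm{0}\right)\cdot\int_{S_h}e^{\bm{\zeta}\cdot\mathbf{x}}\mathbf{R}\,\mathrm{d}\mathbf{x}$. Your splitting $S_h=S_{r_\varepsilon}\cup\left(S_h\backslash S_{r_\varepsilon}\right)$ handles the outer piece correctly, but the inner piece is bounded by $\left\|e^{\bm{\zeta}\cdot\mathbf{x}}\right\|_{L^2\left(S_{r_\varepsilon}\right)}\left\|\mathbf{R}\right\|_{L^2}\leqslant c^{\prime}/\tau^{2}$, where $c^{\prime}$ inherits the constant of the CGO estimate \eqref{est_inverse_problem_CGO}, which depends on $R_1$, $R^{\prime}$, $\lambda$, $\mu$, $\omega$, $\rho$ and is in no way small. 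The leading term you propose to absorb it into is at most $\frac{2}{\tau^{2}}\left|\mathbf{d}\cdot\hat{\mathbf{x}}\left(\theta_{\xi_1}\right)\right|\left|\mathbf{d}^{\perp}\cdot\hat{\mathbf{x}}\left(\theta_{\xi_1}\right)\right|\left(\theta_M-\theta_m\right)\leqslant\left(\theta_M-\theta_m\right)\tau^{-2}$, so the slack between your conservative constant $1/3$ and the sharp constant $2$ buys only $\mathcal{O}\left(\left(\theta_M-\theta_m\right)\tau^{-2}\right)$ with a universal constant, which cannot dominate $c^{\prime}/\tau^{2}$. Shrinking $r_\varepsilon$ does not help: the mass of $e^{\bm{\zeta}\cdot\mathbf{x}}$ concentrates at scale $\left|\mathbf{x}\right|\sim\tau^{-1}$, where only the global $L^2$ bound on $\mathbf{R}$ is available. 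Your argument therefore proves the inequality only with an additional error $-c^{\prime}\left|\mathbf{f}\left(\bm{0}\right)\right|/\tau^{2}$, which is not the stated bound. The paper proceeds differently here: it applies the generalized integral mean value theorem to $\int_{S_h}e^{-\delta\tau\left|\mathbf{x}\right|}\left|\mathbf{R}\left(\mathbf{x}\right)\right|\mathrm{d}\mathbf{x}$, factoring it as $e^{-\delta\tau r_\varepsilon}\int_{S_h}\left|\mathbf{R}\right|\mathrm{d}\mathbf{x}$ with $r_\varepsilon=\left|\mathbf{x}_\varepsilon\right|$ \emph{produced by} the theorem rather than chosen in advance, and then uses $\int_{S_h}\left|\mathbf{R}\right|\mathrm{d}\mathbf{x}\leqslant c\left\|\mathbf{R}\right\|_{L^{2}\left(S_h\right)}\leqslant c/\tau$, yielding the single error term $\frac{c}{\tau}e^{-r_\varepsilon\delta\tau}$ that appears in the statement. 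To close your proof you should replace the localization by this mean-value argument, or otherwise justify that the inner contribution is $o\left(\tau^{-2}\right)$.
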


\begin{proof}
	From the definition of $ \mathbf{u}_0\left(\mathbf{x}\right) $ in \eqref{def_CGOs_u0}, we know
	\begin{align*} 
		\left| \mathbf{f}\left(\bm{0}\right) \cdot \int_{S_h} \mathbf{u}_0\left(\mathbf{x}\right) \mathrm{d}\mathbf{x} \right| \geqslant \left| \mathbf{f}\left(\bm{0}\right) \cdot \bm{\eta} \right| \left|\int_{S_h} e^{\bm{\zeta} \cdot \mathbf{x}} \mathrm{d}\mathbf{x} \right| - \left| \mathbf{f}\left(\bm{0}\right) \right| \left|\int_{S_h} e^{\bm{\zeta} \cdot \mathbf{x}} \mathbf{R}\left(\mathbf{x}\right) \mathrm{d}\mathbf{x} \right| .
	\end{align*}
	Denote $ \mathbf{d} $ and $ \mathbf{d}^\perp $ fulfilling \eqref{ineq_IP_dcodtx} as
	\begin{align} \label{eq_IP_ddperp_sincosphi}
		\mathbf{d}=\begin{pmatrix}
			\cos\theta_d \\
			\sin\theta_d
		\end{pmatrix} ,\quad
		\mathbf{d}^\perp=\begin{pmatrix}
			-\sin\theta_d \\
			\cos\theta_d
		\end{pmatrix}, \quad\theta_d\in\left(0,2\pi\right).
	\end{align}
	Then, when $ \tau $ is large enough, by virtue of \eqref{def_IP_zeta_eta} and the Taylor expansion, we have 
	\begin{align} \label{eq_IP_eta_Taylorexpansion_e1_-i1}
		\bm{\eta} = - \mathrm{i}  \mathbf{d} + \mathbf{d}^\perp - \mathcal{O}\left(\frac{1}{\tau^2} \mathbf{d}\right) = e^{-\mathrm{i}\theta_d} \mathbf{e}_1 - \mathcal{O}\left(\frac{1}{\tau^2} \mathbf{d}\right),\quad
		\mathbf{e}_1:=\begin{pmatrix}
			- \mathrm{i} \\
			1
		\end{pmatrix} .
	\end{align}
	Using \eqref{eq_IP_eta_Taylorexpansion_e1_-i1}, we obtain
	\begin{align*} 
		\left| \mathbf{f}\left(\bm{0}\right) \cdot \bm{\eta} \right| & \geqslant \left| e^{-\mathrm{i}\theta_d} \mathbf{f}\left(\bm{0}\right) \cdot \mathbf{e}_1 \right| - \left| \mathcal{O}\left(\frac{1}{\tau^2} \mathbf{f}\left(\bm{0}\right) \cdot\mathbf{d}\right) \right| \notag \\
		& \geqslant \left| -\mathrm{i} f_1\left(\bm{0}\right) + f_2\left(\bm{0}\right) \right| - \mathcal{O}\left(\frac{1}{\tau^2}\right) \left| \mathbf{f}\left(\bm{0}\right) \right| = \left(1-\mathcal{O}\left(\frac{1}{\tau^2}\right) \right) \left| \mathbf{f}\left(\bm{0}\right) \right| .
	\end{align*}
	When $ \tau $ is large enough, from the polar coordinate transformation, Proposition \ref{prop_inverse_problem_integral_re}, and the integral mean value theorem, it yields that
	\begin{align*} 
		\left|\int_{S_h} e^{\bm{\zeta} \cdot \mathbf{x}} \mathrm{d}\mathbf{x}\right| & = \left| \int_{\theta_m}^{\theta_M} \int_0^h r e^{r \left( \tau \mathbf{d} + \text{i} \sqrt{\tau^2 + k_{\mathrm{s}}^2} \mathbf{d}^{\perp} \right) \cdot \hat{\mathbf{x}}\left(\theta\right)} \mathrm{d}r \mathrm{d}\theta\right| \notag \\
		& \geqslant \left| \int_{\theta_m}^{\theta_M}  \frac{1}{\left( \left( \tau \mathbf{d} + \text{i} \sqrt{\tau^2+k_{\mathrm{s}}^2} \mathbf{d}^{\perp} \right) \cdot \hat{\mathbf{x}}\left(\theta\right) \right)^2} \mathrm{d}\theta\right| - \int_{\theta_m}^{\theta_M} \left| I_R\left(\theta\right)\right| \mathrm{d}\theta \notag \\
		& \geqslant \frac{\theta_M-\theta_m}{3\tau^2} \left|\mathbf{d}\cdot\hat{\mathbf{x}} \left(\theta_{\xi_1}\right)\right| \left|\mathbf{d}^{\perp}\cdot\hat{\mathbf{x}} \left(\theta_{\xi_1}\right)\right| - \frac{2\left(\theta_M-\theta_m\right)}{- \tau \mathbf{d} \cdot \hat{\mathbf{x}} \left(\theta_{\xi_2}\right)} e^{\frac{h}{2} \tau \mathbf{d} \cdot \hat{\mathbf{x}} \left(\theta_{\xi_2}\right)} \notag \\
		& \geqslant \left(\theta_M-\theta_m\right)\left(\frac{1}{3 \tau^2}\left|\mathbf{d}\cdot\hat{\mathbf{x}} \left(\theta_{\xi_1}\right)\right| \left|\mathbf{d}^{\perp}\cdot\hat{\mathbf{x}} \left(\theta_{\xi_1}\right)\right| - \frac{2}{\delta\tau} e^{-\frac{h}{2} \delta \tau}\right).
	\end{align*}
	Here, $ \hat{\mathbf{x}} \left(\theta\right) = \left(\cos \theta, \sin \theta\right)^{\top}\in \mathbb{S}^1 $, $ I_R \left(\theta\right) = \int_h^{+ \infty} r e^{r \left( \tau \mathbf{d} + \text{i} \sqrt{\tau^2 + k_{\mathrm{s}}^2} \mathbf{d}^{\perp} \right) \cdot \hat{\mathbf{x}} \left(\theta\right)} \mathrm{d}r $, $ \theta_{\xi_1}, \theta_{\xi_2} \in \left(\theta_m, \theta_M\right) $, and $ \mathbf{d}^{\perp}\cdot\hat{\mathbf{x}} \left(\theta_{\xi_1}\right) \neq 0 $. Using the integral mean value theorem, \eqref{ineq_IP_ezetax_Sh}, and \eqref{est_inverse_problem_CGO}, it turns out that
	\begin{align*} 
		\left|\int_{S_h} e^{\bm{\zeta} \cdot \mathbf{x}} \mathbf{R}\left(\mathbf{x}\right) \mathrm{d}\mathbf{x} \right| \leqslant \int_{S_h} e^{-\delta \left|\mathbf{x}\right| \tau} \left|\mathbf{R}\left(\mathbf{x}\right)\right| \mathrm{d}\mathbf{x} \leqslant c e^{-\delta r_{\varepsilon} \tau} \left\| \mathbf{R}\left(\mathbf{x}\right) \right\|_{L^2\left(S_h\right)} \leqslant \frac{c}{\tau} e^{-\delta r_{\varepsilon} \tau} ,
	\end{align*}
	where $ r_{\varepsilon}\in\left(0,h\right) $, and $ c $ is a constant independent of $ \tau $. 
	
	The proof is complete. 
\end{proof}

We prove an auxiliary theorem as follows, which plays a role in the characterization of the non-radiating elastic source at the convex corner point. It indicates that, if the vector field $ \mathbf{v}\left(\mathbf{x}\right) $ and $ T_{\bm{\nu}} \mathbf{v}\left(\mathbf{x}\right) $ defined in \eqref{model_part_v_Sh} vanish on the edges near the corner, then $ \mathbf{f}\left(\mathbf{x}\right) $ equals $ \mathbf{0} $ at the corner point, and its gradient $ \nabla\mathbf{f}\left(\mathbf{x}\right) $ satisfies algebraic equations associated with the angles defining the corners.

\begin{theorem} \label{thm_inverse_problem_Sh_f0=0_nablaf0=0}
	Let $ \lambda $ and $ \mu $ fulfill \eqref{ineq_introduction_strong_convexity_condition} with $ \omega > 0 $. Assume that the positive function $ \rho\left(\mathbf{x}\right)\in C^2\left(\mathbb{R}^2\right) $ and $ \rho\left(\mathbf{x}\right)=1 $ for $ \mathbf{x}\in\mathbb{R}^2 \backslash D_R $. For the sector $ S_{\mathbf{x}_0,h} $ and its boundaries $ \Gamma_{\mathbf{x}_0,h}^\pm $ defined in \eqref{def_inverse_problem_local_sector_Sh_Gammah_Lambdah}, we suppose that $ S_{\mathbf{x}_0,h}\subset D_R $. \textcolor{black}{Let $ \mathbf{v}\left(\mathbf{x}\right) \in H^1\left(S_{\mathbf{x}_0,h}\right) $ satisfy} \eqref{model_part_v_Sh}. If $\mathbf{f}\left(\mathbf{x}\right) \in C^{\alpha} \left(\overline{S_{\mathbf{x}_0,h}}\right)$ with $\alpha\in (0,1)$,  then \begin{align*} 
\mathbf{f} \left(\mathbf{x}_0\right) = \bm{0}. 
	\end{align*}
If $\mathbf{f}\left(\mathbf{x}\right) \in C^{1,\alpha} \left(\overline{S_{\mathbf{x}_0,h}}\right)$ with $\alpha\in (0,1)$,  then 
	\begin{equation} \label{eq_IP_gradientf}
\mathbf{f} \left(\mathbf{x}_0\right) = \bm{0},\	\begin{aligned}
			\nabla\mathbf{f}\left(\mathbf{x}_0\right)
			\begin{pmatrix}
				A \\ 
				B
			\end{pmatrix}
			+
			\begin{pmatrix}
				0 & -1 \\
				1 & 0
			\end{pmatrix}
			\nabla\mathbf{f}\left(\mathbf{x}_0\right)
			\begin{pmatrix}
				C \\ 
				D
			\end{pmatrix}
			=\mathbf{0},
		\end{aligned}
	\end{equation}
	with $ A $, $ B $, $ C $, and $ D $ defined in \eqref{def_thm_2_ABCD}. 
\end{theorem}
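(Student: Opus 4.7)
Without loss of generality take $\mathbf{x}_0 = \mathbf{0}$. The plan is to pair the CGO solution $\mathbf{u}_0$ of Lemma \ref{lem_CGOs} against $\mathbf{v}$ from \eqref{model_part_v_Sh} through the first Betti identity in $S_h$. Since $\mathbf{u}_0$ solves the homogeneous Navier equation and both $\mathbf{v}$ and $T_{\bm{\nu}}\mathbf{v}$ vanish on $\Gamma_h^{\pm}$, the boundary contributions on $\Gamma_h^{\pm}$ drop out, giving exactly \eqref{eq_fu0=u0Tnuv_original}. We then Taylor-expand $\mathbf{f}$ at the origin, estimate each piece of \eqref{eq_fu0=u0Tnuv_original} in the large parameter $\tau$, and extract the algebraic identity by letting $\tau\to\infty$ with a carefully chosen $\mathbf{d}$. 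A natural choice is $\mathbf{d} = -(\cos\theta_0, \sin\theta_0)^\top$ with $\theta_0 = (\theta_M + \theta_m)/2$, which bisects the sector opposite to $W_{\mathbf{0}}$ and makes \eqref{ineq_IP_dcodtx} hold with $\delta = \cos((\theta_M - \theta_m)/2)$.

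\textbf{H\"older-continuous step.} Write $\mathbf{f}(\mathbf{x}) = \mathbf{f}(\mathbf{0}) + \bm{\delta}\mathbf{f}(\mathbf{x})$ with $|\bm{\delta}\mathbf{f}(\mathbf{x})| \leq c|\mathbf{x}|^\alpha$. Running the polar-coordinate argument of Lemma \ref{lem_IP_est_int_Sh_xu0_int_Sh_delatfu0} with this weaker pointwise bound yields $|\int_{S_h}\bm{\delta}\mathbf{f}\cdot\mathbf{u}_0 d\mathbf{x}| \leq c\tau^{-2-\alpha}$. Combined with Lemma \ref{lem_IP_est_int_Lamdah_u0Tv-vTu0}, this gives $\mathbf{f}(\mathbf{0}) \cdot \int_{S_h}\mathbf{u}_0 d\mathbf{x} = O(\tau^{-2-\alpha}) + O(\tau e^{-\delta h \tau})$. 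Invoking the lower bound of Lemma \ref{lem_IP_est_f0_int_Sh_u0}, whose leading order is $\tau^{-2}$ and whose prefactor is strictly positive thanks to our choice of $\mathbf{d}$, we conclude $|\mathbf{f}(\mathbf{0})| \leq c\tau^{-\alpha}$; letting $\tau \to \infty$ forces $\mathbf{f}(\mathbf{0}) = \mathbf{0}$.

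\textbf{The $C^{1,\alpha}$ step.} With $\mathbf{f}(\mathbf{0}) = \mathbf{0}$ in hand, \eqref{def_IP_f_C1alpha} simplifies \eqref{eq_fu0=u0Tnuv_original} to
\begin{equation*}
\int_{S_h}(\nabla\mathbf{f}(\mathbf{0})\mathbf{x})\cdot\mathbf{u}_0 d\mathbf{x} = \int_{\Lambda_h}(\mathbf{u}_0\cdot T_{\bm{\nu}}\mathbf{v} - \mathbf{v}\cdot T_{\bm{\nu}}\mathbf{u}_0) d\sigma - \int_{S_h}\bm{\delta}\mathbf{f}\cdot\mathbf{u}_0 d\mathbf{x},
\end{equation*}
whose right-hand side is $O(\tau e^{-\delta h \tau}) + O(\tau^{-3-\alpha})$ by Lemmas \ref{lem_IP_est_int_Lamdah_u0Tv-vTu0} and \ref{lem_IP_est_int_Sh_xu0_int_Sh_delatfu0}. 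Multiplying by $\tau^3$ and using \eqref{est_inverse_problem_CGO} to discard the $\mathbf{R}$-part of $\mathbf{u}_0$ (whose contribution after the $\tau^3$-scaling is only $O(1)$ times an $L^2$-factor that is itself $O(\tau^{-1})$), the problem reduces to the vanishing of
\begin{equation*}
\lim_{\tau\to\infty}\tau^3\int_{\theta_m}^{\theta_M}\int_0^h r^2 e^{\bm{\zeta}\cdot\hat\mathbf{x}(\theta) r}(\nabla\mathbf{f}(\mathbf{0})\hat\mathbf{x}(\theta))\cdot\bm{\eta} dr d\theta.
\end{equation*}
Applying Proposition \ref{prop_inverse_problem_integral_re} to the radial integral, together with the asymptotics $-\bm{\zeta}\cdot\hat\mathbf{x}(\theta)/\tau \to e^{i(\theta - \theta_0)}$ and $\bm{\eta} \to e^{-i\theta_0}(i, -1)^\top$ as $\tau\to\infty$ coming from our choice of $\mathbf{d}$, this collapses to the single complex scalar identity
\begin{equation*}
\int_{\theta_m}^{\theta_M} e^{-3i(\theta-\theta_0)}\bigl(\nabla\mathbf{f}(\mathbf{0})\hat\mathbf{x}(\theta)\bigr)\cdot(i,-1)^\top d\theta = 0.
\end{equation*}

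\textbf{Main obstacle.} The remaining task is computational: expanding the integrand into elementary combinations of $f_{ij}:=\partial_j f_i(\mathbf{0})$ times $e^{-3i\phi}\cos\phi$ and $e^{-3i\phi}\sin\phi$ under the substitution $\phi = \theta - \theta_0$, integrating over $(-(\theta_M-\theta_m)/2,(\theta_M-\theta_m)/2)$, and reorganising the answer via the double-angle identities $\sin(\theta_M+\theta_m)=2\sin\theta_0\cos\theta_0$, $\cos(\theta_M+\theta_m)=1-2\sin^2\theta_0$, $\sin(2(\theta_M-\theta_m))=2\sin(\theta_M-\theta_m)\cos(\theta_M-\theta_m)$ into the combinations prescribed by \eqref{def_thm_2_ABCD}. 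Taking real and imaginary parts of the vanishing complex equation then produces exactly the two scalar equations in \eqref{eq_thm_1_gradientf_rewritten}, i.e.\ \eqref{eq_IP_gradientf}. The bookkeeping needed to match the coefficients $A,B,C,D$ is the delicate point, but the structure of the result is forced by the cubic pole $\mu(\theta)^3$ arising from the $r^2$-moment and by the explicit limiting form of $\bm{\eta}$.
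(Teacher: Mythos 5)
Your overall architecture coincides with the paper's: Betti's formula \eqref{eq_fu0=u0Tnuv_original}, the Taylor expansion \eqref{def_IP_f_C1alpha}, Lemmas \ref{lem_IP_est_int_Lamdah_u0Tv-vTu0}--\ref{lem_IP_est_f0_int_Sh_u0} for the $C^{\alpha}$ step, and Proposition \ref{prop_inverse_problem_integral_re} applied to the $r^2$-moment to produce an angular identity in the $C^{1,\alpha}$ step. Your final complex identity differs from the paper's \eqref{eq_inverse_problem_int_eg=0} only by the nonzero constant factor $-e^{3\mathrm{i}\theta_0}$, so deferring the trigonometric bookkeeping that yields $A,B,C,D$ is no worse than what the paper itself does. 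The $C^{\alpha}$ step is correct.

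There is, however, a genuine gap in the $C^{1,\alpha}$ step: you discard the $\mathbf{R}$-part of $\mathbf{u}_0$ on the grounds that its contribution after the $\tau^3$-rescaling is ``$O(1)$ times an $L^2$-factor that is itself $O(\tau^{-1})$''. The correct accounting is
\begin{equation*}
\left|\int_{S_h}\left(\nabla\mathbf{f}\left(\bm{0}\right)\mathbf{x}\right)\cdot e^{\bm{\zeta}\cdot\mathbf{x}}\mathbf{R}\left(\mathbf{x}\right)\mathrm{d}\mathbf{x}\right|\leqslant \left\| \left|\mathbf{x}\right|e^{\bm{\zeta}\cdot\mathbf{x}}\right\|_{L^2\left(S_h\right)}\left\|\mathbf{R}\right\|_{L^2\left(S_h\right)}\leqslant c\,\tau^{-2}\cdot c\,\tau^{-1}=O\left(\tau^{-3}\right),
\end{equation*}
since $\int_0^h r^3 e^{-2\delta\tau r}\mathrm{d}r\sim\tau^{-4}$. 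After multiplication by $\tau^3$ this term is $O(1)$ --- exactly the same order as the quantity $\frac{2e^{2\mathrm{i}\theta_d}}{-\tau^3}\int_{\theta_m}^{\theta_M}e^{-3\mathrm{i}\theta}g(\theta)\,\mathrm{d}\theta$ you are trying to isolate --- so it cannot simply be discarded, and the estimates \eqref{est_inverse_problem_CGO} (even upgraded to a uniform $L^\infty$ bound on $\mathbf{R}$ via Sobolev embedding) do not improve this to $o(\tau^{-3})$. This is precisely the delicate point that the paper treats separately in \eqref{ineq_IP_int_Sh_fu01}, where the pairing of $\mathbf{f}$ with $\mathbf{u}_{02}=e^{\bm{\zeta}\cdot\mathbf{x}}\mathbf{R}$ is bounded by an exponentially small quantity via a mean-value argument rather than by Cauchy--Schwarz. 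Your proof needs that estimate, or some other argument showing the $\mathbf{R}$-contribution is $o(\tau^{-3})$; as written, this step fails and the limiting identity is contaminated by an error of the same order as the term you keep.
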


\begin{proof} 
	Without loss of generality, we suppose that $ \mathbf{x}_0 = \bm{0} $, as the operator $ \mathcal{L} $ is invariant under rigid motion. In the following, we deal with the case that $\mathbf{f}\left(\mathbf{x}\right) \in C^{1,\alpha} \left(\overline{S_{\mathbf{0},h}}\right)$. The case for $\mathbf{f}\left(\mathbf{x}\right) \in C^{\alpha} \left(\overline{S_{\mathbf{0},h}}\right)$ can be proved in a similar way. By virtue of Lemma \ref{lem_CGOs} and the first Betti's formula, we have \eqref{eq_fu0=u0Tnuv_original} and \eqref{eq_IP_fexpansion_Bettiformula}. Then it follows that
	\begin{align*} 
		\left| \mathbf{f} \left(\bm{0}\right) \cdot \int_{S_h} \mathbf{u}_0 \mathrm{d}\mathbf{x} \right|	\leqslant & \left| \int_{\Lambda_h} \mathbf{u}_0 \cdot T_{\bm{\nu}} \mathbf{v} - \mathbf{v} \cdot T_{\bm{\nu}} \mathbf{u}_0 \mathrm{d}\sigma \right| + \left| \int_{S_h} \left( \nabla \mathbf{f} \left(\bm{0}\right) \mathbf{x} \right) \cdot \mathbf{u}_0 \mathrm{d}\mathbf{x} \right| \\
		& + \left| \int_{S_h} \bm{\delta} \mathbf{f} \left(\mathbf{x}\right) \cdot \mathbf{u}_0 \mathrm{d}\mathbf{x} \right|. 
	\end{align*}
	With the help of Lemma \ref{lem_IP_est_int_Lamdah_u0Tv-vTu0}-\ref{lem_IP_est_f0_int_Sh_u0}, as $ \tau $ is sufficiently large, we have  
	\begin{align} \label{ineq_inverse_problem_f0_tau^2}
		\left| \mathbf{f}\left(\bm{0}\right) \right|\left[\left(\theta_M-\theta_m\right)\left(1-\mathcal{O}\left(\frac{1}{\tau^2}\right)\right) \left(\frac{1}{3 \tau^2}\left|\mathbf{d}\cdot\hat{\mathbf{x}} \left(\theta_{\xi_1}\right)\right| \left|\mathbf{d}^{\perp}\cdot\hat{\mathbf{x}} \left(\theta_{\xi_1}\right)\right| - \frac{2}{\delta\tau} e^{-\frac{h}{2} \delta \tau}\right) \right. \notag \\
		\left.- \frac{c}{\tau}e^{-r_{\varepsilon} \delta \tau}\right] \leqslant  c \tau e^{- \delta h \tau} +  \frac{c}{\tau^3} + \frac{c}{\tau^{3+\alpha}}.
	\end{align}
	Here, $ c $ is a constant independent of $ \tau $, $ r_{\varepsilon}\in\left(0,h\right) $, $ \theta_{\xi_1} \in \left(\theta_m, \theta_M\right) $, $ \mathbf{d} $, $ \mathbf{d}^{\perp} $ and $ \hat{\mathbf{x}}\left(\theta_{\xi_1}\right) $ are unit vectors satisfying \eqref{ineq_IP_dcodtx} and $ \mathbf{d}^{\perp}\cdot\hat{\mathbf{x}} \left(\theta_{\xi_1}\right) \neq 0 $. Multiplying $ \tau^2 $ on the both sides of \eqref{ineq_inverse_problem_f0_tau^2}, we obtain 
	\begin{align*} 
		&\left(\theta_M-\theta_m\right)\left(\frac{1}{3 }\left|\mathbf{d}\cdot\hat{\mathbf{x}} \left(\theta_{\xi_1}\right)\right| \left|\mathbf{d}^{\perp}\cdot\hat{\mathbf{x}} \left(\theta_{\xi_1}\right)\right| - \frac{2}{\delta}\tau e^{-\frac{h}{2} \delta \tau}\right) \left| \mathbf{f}\left(\bm{0}\right) \right| \\
		\leqslant &\frac{c}{\tau} +  \frac{c}{\tau^{1+\alpha}} + \mathcal{O}\left(\frac{1}{\tau^2}\right) + c\tau e^{-r_{\varepsilon} \delta \tau} + c \tau^3 e^{- \delta h \tau} .
	\end{align*}
	Letting $ \tau\to+\infty $, it yields that $ \mathbf{f} \left(\bm{0}\right) = \bm{0} $.
	
	Let $ \mathbf{d} $, $ \mathbf{d}^{\perp} $ and $ \mathbf{e}_1 $ be formulated as in \eqref{eq_IP_ddperp_sincosphi} and \eqref{eq_IP_eta_Taylorexpansion_e1_-i1} respectively. When $ \tau $ is sufficiently large, $ \bm{\zeta} $ and $ \bm{\eta} $ appearing in Lemma \ref{lem_CGOs} can be formulated as
	\begin{align}
		\bm{\zeta} = \tau \left( \mathbf{d} + \text{i} \mathbf{d}^{\perp} \right) + \mathcal{O} \left(\frac{1}{\tau}\mathbf{d}^{\perp}\right) = \tau \mathrm{i} e^{-\mathrm{i}\theta_d} \mathbf{e}_1  + \mathcal{O} \left(\frac{1}{\tau}\mathbf{d}^{\perp}\right), \notag
	\end{align}
	and \eqref{eq_IP_eta_Taylorexpansion_e1_-i1}, respectively. \textcolor{black}{Recall $\mathbf{u}_{01}$ and $\mathbf{u}_{02}$ defined in \eqref{def_IP_u01u02}.} Then we have
	\begin{align} \label{eq_intSh_fu01}
		& \int_{S_h} \mathbf{f} \cdot \mathbf{u}_{01} \mathrm{d}\mathbf{x} = \int_{S_h} e^{\bm{\zeta} \cdot \mathbf{x}} \left[\left(\nabla \mathbf{f} \left(\bm{0}\right) \mathbf{x}\right)\cdot\bm{\eta} + \bm{\delta} \mathbf{f}\left(\mathbf{x}\right)\cdot \bm{\eta}\right] \mathrm{d}\mathbf{x} \notag \\
		= & \int_{S_h} e^{\left[\tau \mathrm{i} e^{-\mathrm{i}\theta_d} \mathbf{e}_1 \cdot \mathbf{x} + \mathcal{O} \left(\frac{1}{\tau}\mathbf{d}^{\perp} \cdot \mathbf{x}\right)\right]}\left[ \left(\nabla \mathbf{f} \left(\bm{0}\right) \mathbf{x}\right)\cdot\mathbf{e}_1 e^{-\mathrm{i}\theta_d}\right. + \mathcal{O} \left(\frac{1}{\tau^2}\left(\nabla \mathbf{f} \left(\bm{0}\right) \mathbf{x}\right)\cdot\mathbf{d}\right)\notag \\
		& \left. + \mathcal{O}\left(\left|\mathbf{x}\right|^{1+\alpha}\right)\right] \mathrm{d}\mathbf{x} \notag \\
		= & \int_{\theta_m}^{\theta_M} \int_{0}^{h} r^2 e^{r \tau\left[e^{\mathrm{i}\left(\theta-\theta_d\right)} + \mathcal{O}\left(\frac{1}{\tau^2}\right)\right]}\left[e^{-\mathrm{i}\theta_d}g\left(\theta\right) + \mathcal{O}\left(\frac{1}{\tau^2}\right) + \mathcal{O}\left(r^{\alpha}\right)\right] \mathrm{d}r \mathrm{d}\theta ,
	\end{align}
	where $ g\left(\theta\right) := -\text{i}\left(\frac{\partial f_1}{\partial x_1}\left(\bm{0}\right)\cos\theta + \frac{\partial f_1}{\partial x_2}\left(\bm{0}\right)\sin\theta\right) + \left(\frac{\partial f_2}{\partial x_1}\left(\bm{0}\right)\cos\theta + \frac{\partial f_2}{\partial x_2}\left(\bm{0}\right)\sin\theta\right) $. Due to \eqref{ineq_IP_dcodtx}, for all $ \theta\in\left[\theta_m, \theta_M\right] $, it holds that $ \cos \left(\theta-\theta_d\right) \leqslant -\delta < 0 $. By Proposition \ref{prop_inverse_problem_integral_re}, we obtain
	\begin{align*}
		& \int_{\theta_m}^{\theta_M} \int_{0}^{h} r^2 e^{r \tau\left[e^{\mathrm{i}\left(\theta-\theta_d\right)} + \mathcal{O}\left(\frac{1}{\tau^2}\right)\right]} e^{-\mathrm{i}\theta_d}g\left(\theta\right) \mathrm{d}r \mathrm{d}\theta \\
		= & \int_{\theta_m}^{\theta_M} \frac{\Gamma\left(3\right) }{-\tau^3\left[e^{\text{i}\left(\theta-\theta_d\right)} + \mathcal{O} \left(\frac{1}{\tau^2}\right)\right]^3} e^{-\text{i}\theta_d}g\left(\theta\right) \mathrm{d}\theta - \int_{\theta_m}^{\theta_M} e^{-\text{i}\theta_d} I_R\left(\theta\right) g\left(\theta\right) \mathrm{d}\theta \\
		= & \left(\frac{2e^{2\text{i}\theta_d}}{-\tau^3} + \mathcal{O} \left(\frac{1}{\tau^5}\right)\right)  \int_{\theta_m}^{\theta_M} e^{-3\text{i}\theta} g\left(\theta\right) \mathrm{d}\theta + \mathcal{O} \left( \frac{1}{\tau} e^{-\frac{h}{2}\delta\tau} \right) ,
	\end{align*}
	where $ I_R\left(\theta\right) := \int_{h}^{+\infty} r^2 e^{r \tau\left[e^{\mathrm{i}\left(\theta-\theta_d\right)} + \mathcal{O}\left(\frac{1}{\tau^2}\right)\right]} \mathrm{d}r $, and 
	\begin{align*}
		\left| I_R\left(\theta\right) \right| \leqslant \frac{2}{-\tau\left[\cos\left(\theta-\theta_d\right) + \mathcal{O}\left(\frac{1}{\tau^2}\right)\right]}e^{\frac{h}{2}\tau\left[\cos\left(\theta-\theta_d\right) + \mathcal{O}\left(\frac{1}{\tau^2}\right)\right]} .
	\end{align*}
	Similarly, it can be calculated that
	\begin{align*}
		\int_{\theta_m}^{\theta_M} \int_{0}^{h} r^2 e^{r \tau\left[e^{\mathrm{i}\left(\theta-\theta_d\right)} + \mathcal{O}\left(\frac{1}{\tau^2}\right)\right]} \mathcal{O}\left(\frac{1}{\tau^2}\right) \mathrm{d}r \mathrm{d}\theta = \mathcal{O} \left(\frac{1}{\tau^5}\right) ,\\
		\int_{\theta_m}^{\theta_M} \int_{0}^{h} r^2 e^{r \tau\left[e^{\mathrm{i}\left(\theta-\theta_d\right)} + \mathcal{O}\left(\frac{1}{\tau^2}\right)\right]} \mathcal{O}\left(r^{\alpha}\right) \mathrm{d}r \mathrm{d}\theta = \mathcal{O} \left(\frac{1}{\tau^{3+\alpha}}\right) .
	\end{align*}
	Then, \eqref{eq_intSh_fu01} can be rewritten as
	\begin{align} \label{eq_IP_int_Sh_fu01}
		\int_{S_h} \mathbf{f} \cdot \mathbf{u}_{01} \mathrm{d}\mathbf{x} = & \left(\frac{2e^{2\text{i}\theta_d}}{-\tau^3} + \mathcal{O} \left(\frac{1}{\tau^5}\right)\right)  \int_{\theta_m}^{\theta_M} e^{-3\text{i}\theta} g\left(\theta\right) \mathrm{d}\theta + \mathcal{O} \left( \frac{1}{\tau} e^{-\frac{h}{2}\delta\tau} \right) + \mathcal{O} \left(\frac{1}{\tau^5}\right)\notag \\
		&  + \mathcal{O} \left(\frac{1}{\tau^{3+\alpha}}\right) .
	\end{align}
	By \eqref{eq_fu0=u0Tnuv_original}, Lemma \ref{lem_IP_est_int_Lamdah_u0Tv-vTu0}, and integral mean value theorem, as $ \tau $ is large enough, we arrive that
	\begin{align} \label{ineq_IP_int_Sh_fu01}
		\left|\int_{S_h} \mathbf{f} \cdot \mathbf{u}_{01} \mathrm{d}\mathbf{x}\right| & \leqslant \left|\int_{S_h} \mathbf{f} \cdot \mathbf{u}_{0} \mathrm{d}\mathbf{x}\right| + \left|\int_{S_h} \mathbf{f} \cdot \mathbf{u}_{02} \mathrm{d}\mathbf{x}\right| \notag \\
		& \leqslant \left|\int_{\Lambda_h} \mathbf{u}_0 \cdot T_{\bm{\nu}} \mathbf{v} - \mathbf{v} \cdot T_{\bm{\nu}} \mathbf{u}_0 \mathrm{d}\sigma\right| + ce^{-\delta\left|\mathbf{x}_{\varepsilon}\right|\tau} \left|\mathbf{f}\left(\mathbf{x}_{\varepsilon}\right)\right| \left\|\mathbf{R}\left(\mathbf{x}\right)\right\|_{L^2\left(S_h\right)} \notag \\
		& \leqslant c\tau e^{- \delta h \tau} +  \frac{c}{\tau}e^{-\delta\left|\mathbf{x}_{\varepsilon}\right|\tau} ,
	\end{align}
	where $ \mathbf{x}_{\varepsilon} \in S_h $, $ \left|\mathbf{x}_{\varepsilon}\right|\in\left(0,h\right) $, and $ c $ is a constant independent of $ \tau $. Combining \eqref{eq_IP_int_Sh_fu01} and \eqref{ineq_IP_int_Sh_fu01}, it can be formulated that
	\begin{align} \label{eq_IP_gtau^3}
		& \left|\frac{2e^{2\text{i}\theta_d}}{-\tau^3}  \int_{\theta_m}^{\theta_M} e^{-3\text{i}\theta} g\left(\theta\right) \mathrm{d}\theta + \mathcal{O} \left( \frac{1}{\tau} e^{-\frac{h}{2}\delta\tau} \right) + \mathcal{O} \left(\frac{1}{\tau^5}\right) + \mathcal{O} \left(\frac{1}{\tau^{3+\alpha}}\right)\right| \notag \\
		\leqslant & c\tau e^{- \delta h \tau} + \frac{c}{\tau}e^{-\delta\left|\mathbf{x}_{\varepsilon}\right|\tau} .
	\end{align}
	Multiply $ \tau^3 $ on the both ends of \eqref{eq_IP_gtau^3} and let $ \tau \to +\infty $. It yields that
	\begin{align} \label{eq_inverse_problem_int_eg=0}
		\int_{\theta_m}^{\theta_M} e^{-3\text{i}\theta} g\left(\theta\right) \mathrm{d}\theta = 0 . 
	\end{align}
	From the real and imaginary parts of \eqref{eq_inverse_problem_int_eg=0}, we arrive at \eqref{eq_IP_gradientf} with $ A $, $ B $, $ C $, and $ D $ defined in \eqref{def_thm_2_ABCD}.
	
	The proof is complete.
\end{proof}

With Theorem \ref{thm_inverse_problem_Sh_f0=0_nablaf0=0} in hand, we now present the proof of Theorem \ref{thm_inverse_problem_non-radiating}.

\begin{proof} [Proof of Theorem \ref{thm_inverse_problem_non-radiating}]
	Since $ \mathbf{f} $ is a non-radiating source, its caused far-field pattern $ \mathbf{u}^{\infty}\left(\hat{\mathbf{x}}\right) = \bm{0} $ for all $ \hat{\mathbf{x}}\in\mathbb{S}^1 $. By virtue of Rellich's lemma and the unique continuation principle, we have
	\begin{align*}
		\mathbf{u}\left(\mathbf{x}\right) = \bm{0}, \quad \mathbf{x}\in\mathbb{R}^2 \backslash \overline{\Omega}.
	\end{align*}
	Then from the trace theorem and the transmission conditions, one has
	\begin{align*} 
		\begin{cases}
			\mathcal{L} \mathbf{u} + \omega^2 \rho\left(\mathbf{x}\right) \mathbf{u} = \mathbf{f} & \text{in } S_{\mathbf{x}_0,h} , \\
			\mathbf{u} = \bm{0}, \ T_{\bm{\nu}} \mathbf{u} = \bm{0} & \text {on } \Gamma_{\mathbf{x}_0,h}^\pm .
		\end{cases}
	\end{align*}
	With the help of Theorem \ref{thm_inverse_problem_Sh_f0=0_nablaf0=0}, by considering two kinds of regularity conditions of $\mathbf f$ around the corner, one can readily finish the proof.
\end{proof}

We now prove the local unique determination of the shape of the source's support.

\begin{proof}[Proof of Theorem \ref{thm_inverse_problem_local_uniqueness}]	
	
	\textcolor{black}{We prove this theorem by contradiction. Without loss of generality, we suppose that there exists a convex sectorial corner $ S_{\mathbf{x}_0,h} $ of $\Omega\Delta\Omega^{\prime}$, as defined in Definition \ref{def_admissible_elastic_source_local}, satisfying 
		\begin{align*}
			S_{\mathbf{x}_0,h}\subset\Omega, \quad \overline{S_{\mathbf{x}_0,h}}\cap\overline{\Omega^{\prime}}=\emptyset.
	\end{align*}
	And for each point $\mathbf{x}$ of the boundaries $ \Gamma_{\mathbf{x}_0,h}^\pm $, there exists an unbounded path $\gamma\subset\mathbb{R}^2\backslash\left(\overline{\Omega}\cup\overline{\Omega^\prime}\right)$ connecting $\mathbf{x}$ to infinity. Let $ \mathbf{u} $ and $ \mathbf{u}^{\prime} $ be the elastic fields of the system \eqref{model_global_non-nested_R2} corresponding to $ \left(\Omega; \mathbf{f}\right) $ and $ \left(\Omega^{\prime}; \mathbf{f}^{\prime}\right) $ respectively. Since $ \mathbf{u}_{\infty}\left(\hat{\mathbf{x}}\right) =\mathbf{u}_{\infty}^{\prime} \left(\hat{\mathbf{x}}\right) $ for all $ \hat{\mathbf{x}}\in\mathbb{S}^1 $, utilizing Rellich's lemma and the unique continuation principle, we have
	\begin{align*}
		\mathbf{u}\left(\mathbf{x}\right) = \mathbf{u}^{\prime}\left(\mathbf{x}\right), \quad \mathbf{x}\in D_{\mathbf{x}_0,h}\backslash\overline{S_{\mathbf{x}_0,h}}.
	\end{align*}}
	With the trace theorem and the transmission conditions, the following equations hold 
	\begin{align*} 
		\begin{cases}
			\mathcal{L} \mathbf{u} + \omega^2 \rho\left(\mathbf{x}\right) \mathbf{u} = \mathbf{f} & \text{in } S_{\mathbf{x}_0,h} , \\
			\mathcal{L} \mathbf{u}^{\prime} + \omega^2 \rho\left(\mathbf{x}\right) \mathbf{u}^{\prime} = \bm{0} & \text{in } S_{\mathbf{x}_0,h} , \\
			\mathbf{u} = \mathbf{u}^{\prime}, \ T_{\bm{\nu}} \mathbf{u}=T_{\bm{\nu}} \mathbf{u}^{\prime} & \text {on } \Gamma_{\mathbf{x}_0,h}^{\pm} .
		\end{cases}
	\end{align*}
	Denote $ \mathbf{v} = \mathbf{u} - \mathbf{u}^{\prime} $. It follows that
	\begin{align*} 
		\begin{cases}
			\mathcal{L} \mathbf{v} + \omega^2 \rho\left(\mathbf{x}\right) \mathbf{v} = \mathbf{f} & \text{in } S_{\mathbf{x}_0,h} , \\
			\mathbf{v} = \bm{0}, \ T_{\bm{\nu}} \mathbf{v} = \bm{0} & \text {on } \Gamma_{\mathbf{x}_0,h}^{\pm} .
		\end{cases}
	\end{align*}
	Using Theorem \ref{thm_inverse_problem_Sh_f0=0_nablaf0=0}, if $\mathbf f$ fulfills the admissible condition (2.1) in Definition \ref{def_admissible_elastic_source_local}, then one readily arrive at $ \mathbf{f} \left(\mathbf{x}_0\right) = \bm{0} $, where we get the contradiction. Similarly, if $\mathbf f$ fulfills the admissible condition (2.2) in Definition \ref{def_admissible_elastic_source_local}, then \eqref{eq_IP_gradientf} is satisfied,  which is contradict with the definition of the admissible elastic sources. 
	
	The proof is complete.
\end{proof}

Based on the local uniqueness results, we now proceed to prove the global results. 

\begin{proof}[Proof of Theorem \ref{thm_inverse_problem_global_uniqueness}]
	
	Using Theorem \ref{thm_inverse_problem_local_uniqueness} at each corner, one can readily arrive that $ \Omega \Delta \Omega^{\prime} $ cannot contain any sectorial corner. Since $ \Omega $ and $ \Omega^{\prime} $ are convex polygons, it follows that $ \Omega = \Omega^{\prime} $. At each corner $ S_{\mathbf{x}_{i},h} $, $ h\in\mathbb{R}_+ $, as stated in the proof of Theorem \ref{thm_inverse_problem_local_uniqueness}, we have 
	\begin{align*} 
		\begin{cases}
			\mathcal{L} \mathbf{u} + \omega^2 \rho\left(\mathbf{x}\right) \mathbf{u} = \mathbf{f} & \text{in } S_{\mathbf{x}_{i},h} , \\
			\mathcal{L} \mathbf{u}^{\prime} + \omega^2 \rho\left(\mathbf{x}\right) \mathbf{u}^{\prime} = \mathbf{f}^{\prime} & \text{in } S_{\mathbf{x}_{i},h} , \\
			\mathbf{u} = \mathbf{u}^{\prime}, \ T_{\bm{\nu}} \mathbf{u}=T_{\bm{\nu}} \mathbf{u}^{\prime} & \text {on } \Gamma_{\mathbf{x}_{i},h}^{\pm} .
		\end{cases}
	\end{align*}
	Letting $ \mathbf{w} = \mathbf{u} - \mathbf{u}^{\prime} $, it can be written as
	\begin{align*} 
		\begin{cases}
			\mathcal{L} \mathbf{w} + \omega^2 \rho\left(\mathbf{x}\right) \mathbf{v} = \mathbf{f}-\mathbf{f}^{\prime} & \text{in } S_{\mathbf{x}_{i},h} , \\
			\mathbf{w} = \bm{0}, \ T_{\bm{\nu}} \mathbf{w} = \bm{0} & \text {on } \Gamma_{\mathbf{x}_{i},h}^{\pm} .
		\end{cases}
	\end{align*}
	Then Theorem \ref{thm_inverse_problem_Sh_f0=0_nablaf0=0} implies that $ \mathbf{f} $ and $ \mathbf{f}^{\prime} $ satisfy \eqref{eq_thm_3_f=fprime} and \eqref{eq_thm_3_gradientf=gradientfprime} at each corner.  
	
	The proof is complete.
\end{proof}
%-------------------------------------------------------

\appendix 
\section{Proof of Lemma \ref{lem_CGOs}} \label{section_appendix} 
In this section, we aim to prove Lemma \ref{lem_CGOs}. Namely,  we need to demonstrate  the existence of the CGO solution $ \mathbf{u}_0\left(\mathbf{x}\right)$ given by \eqref{def_CGOs_u0}  to the Navier equation \eqref{eq_CGOs_Navier_equation_no_source}, with the estimates \eqref{est_inverse_problem_CGO}. Here, $ \bm{\zeta} $ and $ \bm{\eta} $ are defined in \eqref{def_IP_zeta_eta}, $ \tau $ is large enough, and $ c $ is a constant independent of $ \tau $.

We rewrite \eqref{eq_CGOs_Navier_equation_no_source} as an equivalent modified Lippmann-Schwinger equation \eqref{eq_CGOs_modified_Lippmann-Schwinger_equation}, which involves the modified fundamental solution $ \bm{\Psi}^{\bm{\zeta}}\left(\mathbf{x}\right) $ defined by \eqref{def_CGOs_modified_fundamental_solution_Omegazeta} to the operator $ \mathcal{L}+\omega^2 $. Introducing $ \mathbf{R}\left(\mathbf{x}\right) $ and $ v\left(\mathbf{x}\right) $ formulated in \eqref{def_CGOs_R_v}, we transform \eqref{eq_CGOs_modified_Lippmann-Schwinger_equation} into the equivalent system \eqref{eq_CGOs_Rv=S+T}. It can be concluded that  the existence of  $ \mathbf{u}_0\left(\mathbf{x}\right) $  is equivalent to show the existence of $\mathbf{R}\left(\mathbf{x}\right)$. By showing that the operator $ \mathcal{I} - \mathcal{T} $ in \eqref{eq_CGOs_Rv=S+T} is invertible, we obtain the existence of $\mathbf{R}\left(\mathbf{x}\right)$. The estimations can be proved with the help of the Fourier series approach.

The construction of the modified fundamental solution $ \bm{\Psi}^{\bm{\zeta}}\left(\mathbf{x}\right) $ and its related estimates depend on the modified fundamental solution $ h_{\bm{\xi}}\left(\mathbf{x}\right) $ to the operator $ \Delta + k^2 $, $ k\in\mathbb{R}_+ $. Let $ 0 < R < R_1 < R^{\prime} $. We set the square $ S := \left(-R^{\prime}, R^{\prime}\right)^2 $ and the disk $ D_{R^{\prime}}:=\left\{\mathbf{x} \in \mathbb{R}^2:\left| \mathbf{x} \right|< R^{\prime}\right\} $. Define the grid 
$$ G := \left\{ \bm{\alpha} = \left(\alpha_1, \alpha_2\right)^{\top} \in \mathbb{R}^2: \frac{R^{\prime}}{\pi} \alpha_1-\frac{1}{2} \in \mathbb{Z}, \frac{R^{\prime}}{\pi} \alpha_2 \in \mathbb{Z}\right\}.$$

Then the functions $ e_{\bm{\alpha}} \left(\mathbf{x}\right) := \frac{1}{2 R^{\prime}}e^{\mathrm{i} \bm{\alpha} \cdot \mathbf{x}} $, $ \bm{\alpha} \in G $, form an orthonormal basis in $ L^2\left(S\right) $. For $ f\left(\mathbf{x}\right) \in L^2\left(S\right) $, we denote the Fourier coefficients $ \hat{f}\left(\bm{\alpha}\right) := \int_{S}f\left(\mathbf{x}\right) e_{\bm{\alpha}} \left(\mathbf{x}\right) \mathrm{d} \mathbf{x}$, $ \bm{\alpha} \in G $. For $ k, \tau > 0 $, and
\begin{equation}\label{eq:bmfxi}
\bm{\xi} = \left(-\tau, \mathrm{i}\sqrt{\tau^2+k^2} \right)^{\top},
\end{equation}
 we set
\begin{align} \label{eq_CGOs_g_xi_Psi_xi}
	g_{\bm{\xi}} \left(\mathbf{x}\right) := \sum_{\bm{\alpha} \in G} \frac{1}{\bm{\alpha} \cdot \bm{\alpha} - 2\mathrm{i} \bm{\xi} \cdot \bm{\alpha}} e_{\bm{\alpha}} \left(\mathbf{x}\right), \quad h_{\bm{\xi}} \left(\mathbf{x}\right) := e^{ \bm{\xi} \cdot \mathbf{x}} g_{\bm{\xi}}\left(\mathbf{x}\right).
\end{align}
Similar to the three-dimensional case \cite{hahner2002uniqueness}, we can verify that $ h_{\bm{\xi}}\left(\mathbf{x}\right) $ is a fundamental solution for $ \Delta+k^2 $ in $ D_{2R^{\prime}}:=\left\{\mathbf{x} \in \mathbb{R}^2:\left| \mathbf{x} \right|< 2R^{\prime}\right\} $. Set $ \tilde{g}_{\bm{\xi}} \left(\mathbf{x}\right) = h_{\bm{\xi}} \left(\mathbf{x}\right) - \frac{\mathrm{i}}{4} H_0^{\left(1\right)} \left(k\left|\mathbf{x}\right|\right) $, where $ H_0^{\left(1\right)}\left(\mathbf{x}\right) $ is the Hankel function of the first kind of order zero. Then it follows that $ \tilde{g}_{\bm{\xi}}\left(\mathbf{x}\right) \in C^{\infty} \left(D_{2R^{\prime}}\right) $ satisfies $ \Delta \tilde{g}_{\bm{\xi}} + k^2 \tilde{g}_{\bm{\xi}} = 0 $.

In order to prove the existence of CGO solution $ \mathbf{u}_0\left(\mathbf{x}\right) $ and obtain the estimates \eqref{est_inverse_problem_CGO} about the residual term $ \mathbf{R}\left(\mathbf{x}\right) $, we need the following lemma, which defines the potential operator and gives related estimations.

\begin{lemma} \label{lem_CGOs_fundamental_solution_and_integral_estimation_gphi}
	Let $ \bm{\xi}$ be defined by \eqref{eq:bmfxi} with $ k,\tau > 0 $. Assume that $ \tau $ is large enough. 
	Define the potential operator $ \mathcal{P} : L^2\left(D_{R_1}\right)\to H^2\left(D_{R_1}\right) $ by
	\begin{align} \label{def_CGOs_operator_P}
		\mathcal{P}\left(f\right)\left(\mathbf{x}\right) := \int_{D_{R_1}} g_{\bm{\xi}} \left(\mathbf{x} - \mathbf{y}\right) f \left(\mathbf{y}\right) \mathrm{d} \mathbf{y},
	\end{align}
	where $ g_{\bm{\xi}}\left(\mathbf{x}\right) $ is given by \eqref{eq_CGOs_g_xi_Psi_xi}. Then for $ f \in L^2\left(D_{R_1}\right) $, it holds that
	\begin{align} 
		\label{est_CGOs_integral_gf_L^2}
		&\left\| \mathcal{P}\left(f\right)  \right\|_{L^2\left(D_{R_1}\right)} \leqslant \frac{c}{\tau} \left\| f \right\|_{L^2\left(D_{R_1}\right)}, \\
		\label{est_CGOs_gradient_integral_gf_L^2}
		&\left\| \nabla \mathcal{P}\left(f\right) \right\|_{L^2\left(D_{R_1}\right)} \leqslant c \left\| f \right\|_{L^2\left(D_{R_1}\right)}, \\
		\label{est_CGOs_gradient_gradient_integral_gf_L^2}
		&\left\| \nabla^2 \mathcal{P}\left(f\right) \right\|_{L^2\left(D_{R_1}\right)} \leqslant c \tau \left\| f \right\|_{L^2\left(D_{R_1}\right)} .
	\end{align}
	Here, $ c $ is a constant depending on $ k $, $ R_1 $ and  $ R^{\prime} $.
\end{lemma}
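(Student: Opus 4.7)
The plan is to work in the periodic Fourier-series setting on the square $S=(-R',R')^2$, exploiting the fact that $g_{\bm\xi}$ is the Fourier multiplier $1/m(\alpha)$ with
\begin{equation*}
m(\alpha) := |\alpha|^2 - 2\mathrm{i}\bm\xi\cdot\alpha.
\end{equation*}
First extend $f\in L^2(D_{R_1})$ by zero to all of $S$; this is legitimate since $D_{R_1}\subset S$ as $R_1<R'$, and $\|f\|_{L^2(S)}=\|f\|_{L^2(D_{R_1})}$. Then $\mathcal{P}(f)$ in \eqref{def_CGOs_operator_P} coincides with the torus convolution of $f$ with the $2R'$-periodic kernel $g_{\bm\xi}$ defined in \eqref{eq_CGOs_g_xi_Psi_xi}, and a direct computation using the orthonormal basis $\{e_\alpha\}$ gives
\begin{equation*}
\widehat{\mathcal{P}(f)}(\alpha)\;=\;\frac{2R'}{m(\alpha)}\,\hat f(\alpha),\qquad \alpha\in G.
\end{equation*}
Parseval's identity reduces estimates \eqref{est_CGOs_integral_gf_L^2}--\eqref{est_CGOs_gradient_gradient_integral_gf_L^2} to the uniform multiplier bounds
\begin{equation*}
\sup_{\alpha\in G}\frac{1}{|m(\alpha)|}\le \frac{C}{\tau},\qquad \sup_{\alpha\in G}\frac{|\alpha|}{|m(\alpha)|}\le C,\qquad \sup_{\alpha\in G}\frac{|\alpha|^2}{|m(\alpha)|}\le C\tau,
\end{equation*}
since $\widehat{\partial_j\mathcal{P}(f)}(\alpha)=\mathrm{i}\alpha_j\widehat{\mathcal{P}(f)}(\alpha)$.

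The heart of the argument is the lower bound on $|m(\alpha)|$. Plugging in $\bm\xi=(-\tau,\mathrm{i}\sqrt{\tau^2+k^2})^\top$ and separating real and imaginary parts,
\begin{equation*}
m(\alpha)\;=\;\bigl(|\alpha|^2+2\sqrt{\tau^2+k^2}\,\alpha_2\bigr)+2\mathrm{i}\tau\,\alpha_1,
\end{equation*}
so $|m(\alpha)|\ge 2\tau|\alpha_1|$. The crucial structural property of the lattice $G$ is the half-integer shift in the first coordinate: $\alpha_1\in\frac{\pi}{R'}(\mathbb Z+\tfrac12)$, so $|\alpha_1|\ge \pi/(2R')$ uniformly in $\alpha\in G$. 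This forces $|m(\alpha)|\ge \pi\tau/R'$ for every $\alpha\in G$, which immediately yields the first multiplier bound and hence \eqref{est_CGOs_integral_gf_L^2}.

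For the remaining two multiplier bounds I would split the lattice according to the size of $|\alpha|$. In the low-frequency regime $|\alpha|\le 2\sqrt{\tau^2+k^2}$, the bound $|m(\alpha)|\ge c\tau$ from the previous step gives
\begin{equation*}
\frac{|\alpha|}{|m(\alpha)|}\le \frac{2\sqrt{\tau^2+k^2}}{c\tau}\le C,\qquad \frac{|\alpha|^2}{|m(\alpha)|}\le \frac{4(\tau^2+k^2)}{c\tau}\le C\tau,
\end{equation*}
for $\tau$ large. In the high-frequency regime $|\alpha|>2\sqrt{\tau^2+k^2}$ the real part of $m(\alpha)$ dominates:
\begin{equation*}
|\alpha|^2+2\sqrt{\tau^2+k^2}\,\alpha_2\;\ge\;|\alpha|^2-2\sqrt{\tau^2+k^2}\,|\alpha|\;\ge\;|\alpha|^2/2,
\end{equation*}
hence $|m(\alpha)|\ge |\alpha|^2/2$, from which $|\alpha|/|m(\alpha)|\le 2/|\alpha|\le C$ and $|\alpha|^2/|m(\alpha)|\le 2\le C\tau$. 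Combining the two regimes with Parseval delivers \eqref{est_CGOs_gradient_integral_gf_L^2} and \eqref{est_CGOs_gradient_gradient_integral_gf_L^2}.

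The main obstacle is really just the uniform lower bound $|m(\alpha)|\gtrsim\tau$; without the half-integer shift built into $G$ one could take $\alpha_1=0$, so that $m(\alpha)$ would vanish at the zero mode and the entire scheme would collapse. The remainder of the proof is a routine Plancherel computation together with careful tracking of the normalization constant $2R'$ arising in the convolution formula; the dependence of $C$ on $k$, $R_1$, and $R'$ is transparent from the estimates above.
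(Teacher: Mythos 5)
Your argument is essentially the paper's own: both work with the Fourier basis $\{e_{\bm\alpha}\}_{\bm\alpha\in G}$ on $S$, represent $\mathcal P$ as the multiplier $1/(\bm\alpha\cdot\bm\alpha-2\mathrm i\bm\xi\cdot\bm\alpha)$, and extract everything from the lower bound $|\bm\alpha\cdot\bm\alpha-2\mathrm i\bm\xi\cdot\bm\alpha|\ge|\mathrm{Im}(\cdots)|=2\tau|\alpha_1|\ge\pi\tau/R'$ coming from the half-integer shift in the lattice $G$, followed by Parseval. The one place you go beyond the paper is that you actually prove the two higher multiplier bounds $|\bm\alpha|/|m(\bm\alpha)|\le C$ and $|\bm\alpha|^2/|m(\bm\alpha)|\le C\tau$, which the paper simply asserts; your low/high frequency dichotomy is the right way to do this, but the threshold is off by a factor of two. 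In the regime $2\sqrt{\tau^2+k^2}<|\bm\alpha|<4\sqrt{\tau^2+k^2}$ the inequality
\begin{equation*}
|\bm\alpha|^2-2\sqrt{\tau^2+k^2}\,|\bm\alpha|\;\ge\;|\bm\alpha|^2/2
\end{equation*}
is false (take $|\bm\alpha|=3\sqrt{\tau^2+k^2}$: the left side is $3(\tau^2+k^2)$, the right side is $4.5(\tau^2+k^2)$). The fix is trivial: cut at $|\bm\alpha|=4\sqrt{\tau^2+k^2}$ instead, so that $2\sqrt{\tau^2+k^2}\le|\bm\alpha|/2$ gives $\mathrm{Re}\,m(\bm\alpha)\ge|\bm\alpha|^2/2$ in the high-frequency regime, while the low-frequency bounds $|\bm\alpha|/|m(\bm\alpha)|\le 4\sqrt{\tau^2+k^2}\,R'/(\pi\tau)\le C$ and $|\bm\alpha|^2/|m(\bm\alpha)|\le 16(\tau^2+k^2)R'/(\pi\tau)\le C\tau$ still hold for $\tau$ large. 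With that adjustment the proof is complete and, modulo this detail, coincides with the paper's.
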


\begin{proof}
	Notice that for $ \bm{\alpha} \in G $, one has
	\begin{align} \label{ineq_CGOs_alphaalpha-2ixialpha}
		\left| \bm{\alpha} \cdot \bm{\alpha} - 2 \mathrm{i} \bm{\xi} \cdot \bm{\alpha} \right| \geqslant \left| \mathrm{Im}\left(\bm{\alpha} \cdot \bm{\alpha} - 2 \mathrm{i} \bm{\xi} \cdot \bm{\alpha}\right)\right| = 2 \tau \left| \alpha_1 \right| \geqslant \pi \tau/R^{\prime}.
	\end{align}
	When $ f \in C^{\infty}_0 \left(D_{R_1}\right) $, it yields that 
	\begin{align*}
		\left\| \int_{D_{R_1}} g_{\bm{\xi}} \left(\mathbf{x} - \mathbf{y}\right) f \left(\mathbf{y}\right) \mathrm{d} \mathbf{y} \right\|^2_{L^2\left(D_{R_1}\right)} \leqslant \sum_{\bm{\alpha} \in G} \frac{\left|\hat{f}\left(\bm{\alpha}\right)\right|^2}{\left|\bm{\alpha} \cdot \bm{\alpha} - 2\mathrm{i} \bm{\xi} \cdot \bm{\alpha}\right|^2} \leqslant \frac{c}{\tau^2} \left\| f \right\|^2_{L^2\left(D_{R_1}\right)},
	\end{align*}
	where $ c $ is a constant independent of $ \tau $. Since $ C^{\infty}_0 \left(D_{R_1}\right) $ is dense in $ L^2\left(D_{R_1}\right) $, we obtain \eqref{est_CGOs_integral_gf_L^2}. Furthermore, when $ \tau> 0 $ is sufficiently large, there exists a constant $ c_{k,R^{\prime}} $ depending on $ k $ and $ R^{\prime} $, such that
	\begin{align} \label{ineq_CGOs__alpha^2_alpha^4/|alphaalpha-xialpha|^2}
		\frac{\left| \bm{\alpha} \right|^2}{\left|\bm{\alpha} \cdot \bm{\alpha} - 2\mathrm{i} \bm{\xi} \cdot \bm{\alpha}\right|^2} \leqslant c_{k,R^{\prime}},\quad \frac{\left| \bm{\alpha} \right|^4}{\left|\bm{\alpha} \cdot \bm{\alpha} - 2\mathrm{i} \bm{\xi} \cdot \bm{\alpha}\right|^2} \leqslant c_{k,R^{\prime}} \tau^2,
	\end{align}
	for each $ \bm{\alpha} \in G $. Then, with the help of \eqref{ineq_CGOs__alpha^2_alpha^4/|alphaalpha-xialpha|^2}, a similar strategy can be adopted to demonstrate \eqref{est_CGOs_gradient_integral_gf_L^2} and \eqref{est_CGOs_gradient_gradient_integral_gf_L^2}, which finishes the proof.
\end{proof}

Now we use $ h_{\bm{\zeta}}\left(\mathbf{x}\right) $ to construct the modified fundamental solution $ \bm{\Psi}^{\bm{\zeta}}\left(\mathbf{x}\right) $ defined in \eqref{def_CGOs_modified_fundamental_solution_Omegazeta} to the operator $ \mathcal{L}+\omega^2 $. Recall $ \bm{\zeta} $ given by \eqref{def_IP_zeta_eta}. We define
\begin{align} \label{def_CGOs_xi_tildexi} 
	\bm{\xi} := \left(-\tau, \mathrm{i}\sqrt{\tau^2 + k_{\mathrm{s}}^2}\right)^{\top},\quad \tilde{\bm{\xi}} := \left(-\tau, \mathrm{i}\sqrt{\tau^2+k_{\mathrm{p}}^2}\right)^{\top},
\end{align}
where $ k_{\mathrm{s}} $ and $ k_{\mathrm{p}} $ are from \eqref{eq_introduction_kpks}. Then there exists the unit orthogonal transformation $ \mathbf{Q} \in \mathbb{R}^{2 \times 2} $, such that $ \bm{\zeta} =\mathbf{Q} ^\top  \bm{\xi} $. Introduce
\begin{align} \label{def_CGOs_tilde{zeta}=Q^Ttilde{xi}}
	\tilde{\bm{\zeta}} := \mathbf{Q}^{\top} \tilde{\bm{\xi}}.
\end{align}
We set the modified fundamental solution $ \bm{\Psi}^{\bm{\zeta}}\left(\mathbf{x}\right) = \left(\Psi_{i j}^{\bm{\zeta}}\right)_{ij=1}^2\left(\mathbf{x}\right) $ as
\begin{align} \label{def_CGOs_modified_fundamental_solution_Omegazeta}
	\Psi_{i j}^{\bm{\zeta}} \left(\mathbf{x}\right) = \frac{1}{\mu} \delta_{ij} h_{\bm{\zeta}}\left(\mathbf{x}\right) + \frac{1}{\omega^2} \partial_i \partial_j \left( h_{\bm{\zeta}}\left(\mathbf{x}\right) - h_{\tilde{\bm{\zeta}}}\left(\mathbf{x}\right)  \right), \quad \mathbf{x}\in\mathbb{R}^2, \mathbf{x}\neq\mathbf{0}.
\end{align}
Here, $ h_{\bm{\zeta}}\left(\mathbf{x}\right) $ and $ h_{\tilde{\bm{\zeta}}}\left(\mathbf{x}\right) $ are given by \eqref{eq_CGOs_g_xi_Psi_xi}. By the aid of Helmholtz decomposition and the relationship between Navier operator and Helmholtz operator, we can readily verify that $ \bm{\Psi}^{\bm{\zeta}}\left(\mathbf{x}\right) $ is a fundamental solution to the operator $ \mathcal{L}+\omega^2 $.

Let $ \tilde{\bm{\Psi}}^{\bm{\zeta}}\left(\mathbf{x}\right) = \bm{\Psi}^{\bm{\zeta}}\left(\mathbf{x}\right) - \bm{\Gamma}^\omega\left(\mathbf{x}\right) $, where the Kupradze matrix $ \bm{\Gamma}^{\omega}\left(\mathbf{x}\right) =  \left(\Gamma_{i j}^{\omega}\right)_{ij=1}^2\left(\mathbf{x}\right)  $ (cf. \cite{ammari2015mathematical}) is a fundamental solution for the operator $ \mathcal{L}+\omega^2 $, which can be written as
\begin{align*}
	\Gamma_{i j}^{\omega} \left(\mathbf{x}\right) = -\frac{\mathrm{i}}{4\mu} \delta_{ij} H_0^{\left(1\right)} \left(k_{\mathrm{s}}\left|\mathbf{x}\right|\right) - \frac{\mathrm{i}}{4\omega^2} \partial_i \partial_j \left( H_0^{\left(1\right)} \left(k_{\mathrm{s}}\left|\mathbf{x}\right|\right) - H_0^{\left(1\right)} \left(k_{\mathrm{p}}\left|\mathbf{x}\right|\right) \right),
\end{align*}
with $ \mathbf{x}\in\mathbb{R}^2, \mathbf{x}\neq\mathbf{0} $. Since $ \bm{\Psi}^{\bm{\zeta}}\left(\mathbf{x}\right) $ and $ \bm{\Gamma}^\omega\left(\mathbf{x}\right) $ have the same singular behavior at the same point $ \mathbf{0} $, it is easy to verify that the columns of $ \tilde{\bm{\Psi}}^{\bm{\zeta}}\left(\mathbf{x}\right) $ are $ C^{\infty} $-smooth solutions to $ \mathcal{L} \mathbf{u} + \omega^2 \mathbf{u} = \bm{0} $ in $ D_{R_1} $.

For $ \bm{\eta} $ and $ \bm{\zeta} $ defined in \eqref{def_IP_zeta_eta}, it is easy to verify that $ e^{\bm{\zeta} \cdot \mathbf{x}} \bm{\eta}  $ satisfies $ \mathcal{L} \mathbf{u} + \omega^2 \mathbf{u} = \bm{0} $ in $ \mathbb{R}^2 $. Since $ \mathbf{u}_0\left(\mathbf{x}\right) $ fulfills \eqref{eq_CGOs_Navier_equation_no_source}, by Betti's formula, we obtain the modified Lippmann-Schwinger equation
\begin{align} \label{eq_CGOs_modified_Lippmann-Schwinger_equation}
	\mathbf{u}_0\left(\mathbf{x}\right) = e^{\bm{\zeta}\cdot\mathbf{x}} \bm{\eta} - \omega^2 \int_{D_R} \left(1-\rho\left(\mathbf{y}\right)\right) \bm{\Psi}^{\bm{\zeta}}\left(\mathbf{x}-\mathbf{y}\right) \mathbf{u}_0\left(\mathbf{y}\right) \mathrm{d} \mathbf{y}.
\end{align}
When $ \mathbf{u}_0\left(\mathbf{x}\right) \in C \left(\overline{D_{R_1}}\right) $ satisfies \eqref{eq_CGOs_modified_Lippmann-Schwinger_equation}, by virtual of the mapping properties of volume potentials, we readily derive that $ \mathbf{u}_0\left(\mathbf{x}\right) \in C^2\left(D_{R_1}\right) $ is a solution to $ \mathcal{L}\mathbf{u}_0 + \omega^2 \rho\left(\mathbf{x}\right) \mathbf{u}_0 = \bm{0} $ in $ D_{R_1} $.

Inserting \eqref{def_CGOs_modified_fundamental_solution_Omegazeta} into \eqref{eq_CGOs_modified_Lippmann-Schwinger_equation} and using integration by parts, we derive that
\begin{align} \label{eq_CGOs_substituted_modified_Lippmann-Schwinger_equation}
	\mathbf{u}_0\left(\mathbf{x}\right) = & e^{\bm{\zeta}\cdot\mathbf{x}} \bm{\eta}-\frac{\omega^2}{\mu} \int_{D_R}\left(1-\rho\left(\mathbf{y}\right)\right) h_{\bm{\zeta}}\left(\mathbf{x} - \mathbf{y}\right) \mathbf{u}_0\left(\mathbf{y}\right) \mathrm{d} \mathbf{y} \notag \\
	& - \nabla \int_{D_R}\left(h_{\bm{\zeta}}-h_{\tilde{\bm{\zeta}}}\right)\left(\mathbf{x} - \mathbf{y}\right)\left(\left(1-\rho\right) \nabla \cdot \mathbf{u}_0-\nabla \rho \cdot \mathbf{u}_0\right)\left(\mathbf{y}\right) \mathrm{d} \mathbf{y}.
\end{align}
Noting that $ \mathbf{u}_0\left(\mathbf{x}\right) $ and $ \nabla \cdot \mathbf{u}_0\left(\mathbf{x}\right) $ are both in \eqref{eq_CGOs_substituted_modified_Lippmann-Schwinger_equation}, we take the divergence of both sides of \eqref{eq_CGOs_substituted_modified_Lippmann-Schwinger_equation} and use integration by parts. Then it follows that
\begin{align} \label{eq_CGOs_nabla_substituted_modified_Lippmann-Schwinger_equation}
	\nabla \cdot \mathbf{u}_0\left(\mathbf{x}\right) = - k_{\mathrm{p}}^2 \int_{D_R} h_{\tilde{\bm{\zeta}}}\left(\mathbf{x} - \mathbf{y}\right)((1-\rho) \nabla \cdot \mathbf{u}_0-\nabla \rho \cdot \mathbf{u}_0)\left(\mathbf{y}\right) \mathrm{d} \mathbf{y}.
\end{align}
Introduce 
\begin{align} \label{def_CGOs_R_v} 
	\mathbf{R}\left(\mathbf{x}\right) := e^{-\bm{\zeta} \cdot \mathbf{x}} \mathbf{u}_0\left(\mathbf{x}\right) - \bm{\eta}, \quad v\left(\mathbf{x}\right) := e^{-\bm{\zeta} \cdot \mathbf{x}} \nabla \cdot \mathbf{u}_0\left(\mathbf{x}\right) , \quad \mathbf{x} \in \overline{D_{R_1}}.
\end{align}
Multiplying both sides of \eqref{eq_CGOs_substituted_modified_Lippmann-Schwinger_equation} and \eqref{eq_CGOs_nabla_substituted_modified_Lippmann-Schwinger_equation} by $ e^{-\bm{\zeta} \cdot \mathbf{x}} $ respectively, we obtain 
\begin{align} \label{eq_CGOs_Rv=S+T}
	\begin{pmatrix}
		\mathbf{R} \\
		v
	\end{pmatrix}
	= \mathcal{S}\left(\bm{\eta}\right) + \mathcal{T} 
	\begin{pmatrix}
		\mathbf{R} \\
		v
	\end{pmatrix},
\end{align}
where the operators $ \mathcal{S} : L^2\left(D_{R_1}\right)\to L^2\left(D_{R_1}\right) $ and $ \mathcal{T} : L^2\left(D_{R_1}\right)\to L^2\left(D_{R_1}\right) $ are formulated as 
\begin{align*} 
	\mathcal{S}\left(\bm{\eta}\right) :=
	\begin{pmatrix}
		-\frac{\omega^2}{\mu} \mathcal{P}\left(\left(1-\rho\right) \bm{\eta}\right) - \left(\mathcal{T}_1 + \mathcal{T}_2\right) \left(\nabla \rho \cdot \bm{\eta}\right) \\
		-\mathcal{T}_3 \left( \nabla \rho \cdot \bm{\eta}\right)
	\end{pmatrix},
\end{align*}
\begin{align*} 
	\mathcal{T} \begin{pmatrix}
		\mathbf{R} \\
		v
	\end{pmatrix} := 
	\begin{pmatrix}
		-\frac{\omega^2}{\mu} \mathcal{P}\left(\left(1-\rho\right) \mathbf{R}\right) + \left(\mathcal{T}_1 + \mathcal{T}_2\right) \left(\left(1-\rho\right) v - \nabla \rho \cdot \mathbf{R}\right) \\
		\mathcal{T}_3 \left(\left(1-\rho\right) v - \nabla \rho \cdot \mathbf{R}\right)
	\end{pmatrix}.
\end{align*}
Here $ \mathcal{P} $ is given by \eqref{def_CGOs_operator_P}, and we define the operator $ \mathcal{T}_1 $, $ \mathcal{T}_2 $ and $ \mathcal{T}_3 : L^2\left(D_{R_1}\right)\to L^2\left(D_{R_1}\right) $ by
\begin{align*} 
	&\mathcal{T}_1\left(\varphi\right)\left(\mathbf{x}\right) := - \bm{\zeta} \int_{D_{R_1}} \left(g_{\bm{\zeta}}\left(\mathbf{x} - \mathbf{y}\right) - e^{\left(\tilde{\bm{\zeta}}-\bm{\zeta}\right) \cdot \left(\mathbf{x} - \mathbf{y}\right)} g_{\tilde{\bm{\zeta}}}\left(\mathbf{x} - \mathbf{y}\right)\right) \varphi\left(\mathbf{y}\right) \mathrm{d} \mathbf{y}, \\
	&\mathcal{T}_2\left(\varphi\right)\left(\mathbf{x}\right) := - \nabla \int_{D_{R_1}} \left(g_{\bm{\zeta}}\left(\mathbf{x} - \mathbf{y}\right) - e^{\left(\tilde{\bm{\zeta}}-\bm{\zeta}\right) \cdot \left(\mathbf{x} - \mathbf{y}\right)} g_{\tilde{\bm{\zeta}}}\left(\mathbf{x} - \mathbf{y}\right)\right) \varphi\left(\mathbf{y}\right) \mathrm{d} \mathbf{y}, \\
	&\mathcal{T}_3\left(\varphi\right)\left(\mathbf{x}\right) := - k_{\mathrm{p}}^2 \int_{D_{R_1}} g_{\tilde{\bm{\zeta}}}\left(\mathbf{x} - \mathbf{y}\right) e^{\left(\tilde{\bm{\zeta}}-\bm{\zeta}\right) \cdot \left(\mathbf{x} - \mathbf{y}\right)}\varphi\left(\mathbf{y}\right) \mathrm{d} \mathbf{y},
\end{align*}
where $ \varphi \in L^2 \left(D_{R_1}\right) $. Then the existence of the solution $ \mathbf{u}_0\left(\mathbf{x}\right) $ to the modified Lippmann-Schwinger equation \eqref{eq_CGOs_modified_Lippmann-Schwinger_equation} is equivalently transformed into the existence of solutions $ \mathbf{R}\left(\mathbf{x}\right) $ and $ v\left(\mathbf{x}\right) $ in the system \eqref{eq_CGOs_Rv=S+T}.

We now prove the existence of CGO solution \eqref{def_CGOs_u0} with estimates \eqref{est_inverse_problem_CGO}.

\begin{proof}[Proof of Lemma \ref{lem_CGOs}]

In view of the definition of the operators $\mathcal S$ and $\mathcal T$ given by \eqref{eq_CGOs_Rv=S+T}, we only need to prove the mapping property of $\mathcal T_i$ $(i=1,2,3)$. Namely, we shall show that $\mathcal T_i$ are bounded operators from $L^2(D_{R_1})$ to $L^2(D_{R_1})$. Furthermore, the asymptotic analysis with respect to the parameter $\tau$ for the norms of these three operators are investigated when $\tau \rightarrow \infty$.  
	
	For the system \eqref{eq_CGOs_Rv=S+T}, the integrand of $ \mathcal{T}_1\left(\varphi\right) $ can be split into
	\begin{align} \label{eq_CGOs_split_g-eg}
		& \left(g_{\bm{\zeta}}\left(\mathbf{x} - \mathbf{y}\right) - e^{\left(\tilde{\bm{\zeta}}-\bm{\zeta}\right) \cdot \left(\mathbf{x} - \mathbf{y}\right)} g_{\tilde{\bm{\zeta}}}\left(\mathbf{x} - \mathbf{y}\right)\right) \varphi \left(\mathbf{y}\right) = \left(g_{\bm{\zeta}}\left(\mathbf{x} - \mathbf{y}\right) -  g_{\tilde{\bm{\zeta}}}\left(\mathbf{x} - \mathbf{y}\right)\right) \varphi \left(\mathbf{y}\right) \\
		& + g_{\tilde{\bm{\zeta}}}\left(\mathbf{x} - \mathbf{y}\right) \left(1 - e^{-\left(\tilde{\bm{\zeta}}-\bm{\zeta}\right) \cdot \mathbf{y}} \right) \varphi \left(\mathbf{y}\right) + \left(1 - e^{\left(\tilde{\bm{\zeta}}-\bm{\zeta}\right) \cdot \mathbf{x}} \right) g_{\tilde{\bm{\zeta}}}\left(\mathbf{x} - \mathbf{y}\right) e^{-\left(\tilde{\bm{\zeta}}-\bm{\zeta}\right) \cdot \mathbf{y}} \varphi \left(\mathbf{y}\right). \notag
	\end{align}
	From \eqref{def_CGOs_xi_tildexi}, when $ \tau $ is large enough, it yields that
	\begin{align} \label{ineq_CGOs_tildexi-xi}
		\left|\tilde{\bm{\xi}} - \bm{\xi} \right| = \frac{\left| k_{\mathrm{p}}^2 - k_{\mathrm{s}}^2 \right|}{\sqrt{\tau^2+k_{\mathrm{p}}^2} + \sqrt{\tau^2+k_{\mathrm{s}}^2}} \leqslant \frac{c}{\tau},
	\end{align}
	where $ c $ is a constant independent of $ \tau $. Recall $ \bm{\zeta} $ defined in \eqref{def_IP_zeta_eta} and $ \tilde{\bm{\zeta}} $ formulated in \eqref{def_CGOs_tilde{zeta}=Q^Ttilde{xi}}. For $ \varphi \in C^{\infty}_0 \left(D_{R_1}\right) $, we write $ \psi \left(\mathbf{x}\right) = \varphi \left(\mathbf{Q}^{\top} \mathbf{x}\right) $, with the unit orthogonal transformation $ \mathbf{Q} $ in \eqref{def_CGOs_tilde{zeta}=Q^Ttilde{xi}}. Then it leads that $ g_{\bm{\xi}}\left(\mathbf{x}\right) = g_{\bm{\zeta}}\left(\mathbf{Q}^{\top}\mathbf{x}\right) $ and $ g_{\tilde{\bm{\xi}}}\left(\mathbf{x}\right) = g_{\tilde{\bm{\zeta}}}\left(\mathbf{Q}^{\top}\mathbf{x}\right) $. With the help of \eqref{eq_CGOs_g_xi_Psi_xi}, \eqref{ineq_CGOs_alphaalpha-2ixialpha}, \eqref{ineq_CGOs__alpha^2_alpha^4/|alphaalpha-xialpha|^2}, the invariance of the integrals with respect to orthogonal transformations and Parseval's identity, it follows that
	\begin{align} \label{eq_CGOs_zeta_g-g_varphi}
		& \left\| \bm{\zeta} \int_{D_{R_1}} \left(g_{\bm{\zeta}}\left(\mathbf{x} - \mathbf{y}\right) - g_{\tilde{\bm{\zeta}}}\left(\mathbf{x} - \mathbf{y}\right)\right) \varphi \left(\mathbf{y}\right) \mathrm{d} \mathbf{y} \right\|^2_{L^2\left(D_{R_1}\right)} \notag \\	
		= & \int_{D_{R_1}} \left| \sum_{\bm{\alpha} \in G} \frac{2\mathrm{i}\left( \bm{\xi}-\tilde{\bm{\xi}} \right) \cdot \bm{\alpha} \hat{\psi}\left(\bm{\alpha}\right)}{\left(\bm{\alpha} \cdot \bm{\alpha} - 2\mathrm{i} \bm{\xi} \cdot \bm{\alpha}\right)\left(\bm{\alpha} \cdot \bm{\alpha} - 2\mathrm{i} \tilde{\bm{\xi}} \cdot \bm{\alpha}\right)} \bm{\xi} e_{\bm{\alpha}} \left(\mathbf{x}\right) \right|^2 \mathrm{d} \mathbf{x} \notag \\
		\leqslant & \sum_{\bm{\alpha} \in G} \frac{c}{\tau^2} \left|\hat{\psi}\left(\bm{\alpha}\right)\right|^2 = \frac{c}{\tau^2} \left\| \varphi \right\|^2_{L^2\left(D_{R_1}\right)} ,
	\end{align}
	where $ \tau $ is large enough and $ c $ is independent of $ \tau $. Similarly, by virtue of \eqref{est_CGOs_integral_gf_L^2}, 
	\begin{align} \label{ineq_CGOs_sup_1-e}
		\sup_{\mathbf{x} \in D_{R_1}} \left| 1 - e^{-\left(\tilde{\bm{\zeta}}-\bm{\zeta}\right) \cdot \mathbf{x}} \right| \leqslant \frac{c}{\tau}, 
	\end{align}	
	and the uniform boundedness of $ \left| e^{-\left(\tilde{\bm{\zeta}}-\bm{\zeta}\right) \cdot \mathbf{y}} \right| $ in $ \overline{D_{R_1}} $, it follows that
	\begin{align} 
		\label{est_CGOs_zeta_int_g_1-e_varphi}
		& \left\| \bm{\zeta} \int_{D_{R_1}} g_{\tilde{\bm{\zeta}}}\left(\mathbf{x} - \mathbf{y}\right) \left(1 - e^{-\left(\tilde{\bm{\zeta}}-\bm{\zeta}\right) \cdot \mathbf{y}} \right) \varphi \left(\mathbf{y}\right) \mathrm{d} \mathbf{y} \right\|^2_{L^2\left(D_{R_1}\right)} \leqslant \frac{c}{\tau^2} \left\| \varphi \right\|^2_{L^2\left(D_{R_1}\right)} , \\
		\label{est_CGOs_1-ezeta_int_gevarphi}
		& \left\| \left(1 - e^{\left(\tilde{\bm{\zeta}}-\bm{\zeta}\right) \cdot \mathbf{x}} \right) \bm{\zeta} \int_{D_{R_1}} g_{\tilde{\bm{\zeta}}}\left(\mathbf{x} - \mathbf{y}\right) e^{-\left(\tilde{\bm{\zeta}}-\bm{\zeta}\right) \cdot \mathbf{y}} \varphi \left(\mathbf{y}\right) \mathrm{d} \mathbf{y} \right\|^2_{L^2\left(D_{R_1}\right)}  \leqslant \frac{c}{\tau^2} \left\| \varphi \right\|^2_{L^2\left(D_{R_1}\right)} .
	\end{align}
	Then we arrive at  
	\begin{align} \label{est_CGOs_T1}
		\left\| \mathcal{T}_1\left(\varphi\right) \right\|_{L^2\left(D_{R_1}\right)} \leqslant \frac{c}{\tau} \left\| \varphi \right\|_{L^2\left(D_{R_1}\right)}.
	\end{align}	
	An analogous argument for $ \mathcal{T}_1 $ is used to deal with $ \mathcal{T}_2 $. We also split the integrand of $ \mathcal{T}_2\left(\varphi\right) $ into \eqref{eq_CGOs_split_g-eg}. Similar to \eqref{eq_CGOs_zeta_g-g_varphi}, by virtue of \eqref{ineq_CGOs__alpha^2_alpha^4/|alphaalpha-xialpha|^2}, it can be calculated that
	\begin{align} \label{est_CGOs_nabla_int_g-g}
		\left\| \nabla \int_{D_{R_1}} \left(g_{\bm{\zeta}}\left(\mathbf{x} - \mathbf{y}\right) - g_{\tilde{\bm{\zeta}}}\left(\mathbf{x} - \mathbf{y}\right)\right) \varphi \left(\mathbf{y}\right) \mathrm{d} \mathbf{y} \right\|^2_{L^2\left(D_{R_1}\right)} \leqslant \frac{c}{\tau^2} \left\| \varphi \right\|^2_{L^2\left(D_{R_1}\right)} .
	\end{align}
	Using \eqref{est_CGOs_gradient_integral_gf_L^2} and \eqref{ineq_CGOs_sup_1-e}, one derives that 
	\begin{align} \label{est_CGOs_nabla_int_g1-e}
		\left\| \nabla \int_{D_{R_1}} g_{\tilde{\bm{\zeta}}}\left(\mathbf{x} - \mathbf{y}\right) \left(1 - e^{-\left(\tilde{\bm{\zeta}}-\bm{\zeta}\right) \cdot \mathbf{y}} \right) \varphi \left(\mathbf{y}\right) \mathrm{d} \mathbf{y} \right\|^2_{L^2\left(D_{R_1}\right)} \leqslant \frac{c}{\tau^2} \left\| \varphi \right\|^2_{L^2\left(D_{R_1}\right)} .
	\end{align}
	With a strategy similar to \eqref{est_CGOs_zeta_int_g_1-e_varphi} and \eqref{est_CGOs_1-ezeta_int_gevarphi}, we get
	\begin{align}
		\label{est_CGOs_zeta-zetae_int_ge}
		& \left\| (\tilde{\bm{\zeta}}-\bm{\zeta})  e^{\left(\tilde{\bm{\zeta}}-\bm{\zeta}\right) \cdot \mathbf{x}} \int_{D_{R_1}} g_{\tilde{\bm{\zeta}}}\left(\mathbf{x} - \mathbf{y}\right) e^{-\left(\tilde{\bm{\zeta}}-\bm{\zeta}\right) \cdot \mathbf{y}} \varphi \left(\mathbf{y}\right) \mathrm{d} \mathbf{y} \right\|^2_{L^2\left(D_{R_1}\right)} \leqslant \frac{c}{\tau^4} \left\| \varphi \right\|^2_{L^2\left(D_{R_1}\right)}, \\
		\label{est_CGOs_1-e_nabla_int_ge}
		& \left\| \left(1 - e^{\left(\tilde{\bm{\zeta}}-\bm{\zeta}\right) \cdot \mathbf{x}} \right) \nabla \int_{D_{R_1}} g_{\tilde{\bm{\zeta}}}\left(\mathbf{x} - \mathbf{y}\right) e^{-\left(\tilde{\bm{\zeta}}-\bm{\zeta}\right) \cdot \mathbf{y}} \varphi \left(\mathbf{y}\right) \mathrm{d} \mathbf{y} \right\|^2_{L^2\left(D_{R_1}\right)} \leqslant \frac{c}{\tau^2} \left\| \varphi \right\|^2_{L^2\left(D_{R_1}\right)}.
	\end{align}
	Then it yields that 
	\begin{align} \label{est_CGOs_T2} 
		\left\| \mathcal{T}_2\left(\varphi\right) \right\|_{L^2\left(D_{R_1}\right)} \leqslant \frac{c}{\tau} \left\| \varphi \right\|_{L^2\left(D_{R_1}\right)}.
	\end{align}
	By virtue of \eqref{est_CGOs_integral_gf_L^2} and the uniform boundedness of $ \left| e^{-\left(\tilde{\bm{\zeta}}-\bm{\zeta}\right) \cdot \mathbf{y}} \right| $ in $ \overline{D_{R_1}} $, we obtain
	\begin{align} \label{est_CGOs_T3} 
		\left\| \mathcal{T}_3\left(\varphi\right) \right\|_{L^2\left(D_{R_1}\right)} \leqslant \frac{c}{\tau} \left\| \varphi \right\|_{L^2\left(D_{R_1}\right)}.
	\end{align}
	Due to the $ C^2 $-smoothness of $ \rho\left(\mathbf{x}\right) $ in $ \overline{D_{R_1}} $, with the help of \eqref{est_CGOs_integral_gf_L^2}, \eqref{est_CGOs_T1}, \eqref{est_CGOs_T2} and \eqref{est_CGOs_T3}, we derive that $ \mathcal{S} $ is a bounded operator and $ \left\| \mathcal{T} \right\|_{L^2\left(D_{R_1}\right)} \leqslant c/\tau $, where $ c $ is a constant independent of $ \tau $. When $ \tau $ is large enough, we know that $ \left\| \mathcal{T} \right\|_{L^2\left(D_{R_1}\right)} $ is sufficiently small. Then it shows that the operator $ \mathcal{I} - \mathcal{T} $ in \eqref{eq_CGOs_Rv=S+T} is invertible and there exist solutions $ \mathbf{R}\left(\mathbf{x}\right) $ and $ v\left(\mathbf{x}\right) $ to the system \eqref{eq_CGOs_Rv=S+T}. Consequently, the Navier equation \eqref{eq_CGOs_Navier_equation_no_source} has a solution $ \mathbf{u}_0\left(\mathbf{x}\right) $ formulated as \eqref{def_CGOs_u0}. 
	
	For $ \mathbf{R}\left(\mathbf{x}\right) $ and $ v\left(\mathbf{x}\right) $ shown in \eqref{eq_CGOs_Rv=S+T}, we prove estimates about $ \mathbf{R}\left(\mathbf{x}\right) $ and $ v\left(\mathbf{x}\right) $ in $ L^2 $-norm first, and then derive estimates about $ \nabla\mathbf{R}\left(\mathbf{x}\right) $, $ \nabla v\left(\mathbf{x}\right) $ and $ \nabla^2\mathbf{R}\left(\mathbf{x}\right) $ in turn.

	Using \eqref{est_CGOs_integral_gf_L^2}, \eqref{est_CGOs_T1}, \eqref{est_CGOs_T2} and the $ C^2 $-smoothness of $ \rho\left(\mathbf{x}\right) $ in $ \overline{D_{R_1}} $, we have
	\begin{align} \label{ineq_CGOs_RvetaR}
		\left\| \mathbf{R} \right\|_{L^2\left(D_{R_1}\right)} \leqslant \frac{c}{\tau} \left\| v \right\|_{L^2\left(D_{R_1}\right)} + \frac{c}{\tau} \left\| \bm{\eta} \right\|_{L^2\left(D_{R_1}\right)} + \frac{c}{\tau} \left\| \mathbf{R} \right\|_{L^2\left(D_{R_1}\right)}.
	\end{align}
	By virtue of \eqref{est_CGOs_T3}, it yields that
	\begin{align} \label{ineq_CGOs_vvetaR}
		\left\| v \right\|_{L^2\left(D_{R_1}\right)} \leqslant \frac{c}{\tau} \left\| v \right\|_{L^2\left(D_{R_1}\right)} + \frac{c}{\tau} \left\| \bm{\eta} \right\|_{L^2\left(D_{R_1}\right)} + \frac{c}{\tau} \left\| \mathbf{R} \right\|_{L^2\left(D_{R_1}\right)}.
	\end{align} 
	As a result, when $ \tau $ is large enough, combining \eqref{ineq_CGOs_RvetaR} and \eqref{ineq_CGOs_vvetaR}, we arrive at 
	\begin{align} \label{est_CGOs_R_v_L^2}
		\left\| \mathbf{R}\left(\mathbf{x}\right) \right\|_{L^2\left(D_{R_1}\right)} \leqslant \frac{c}{\tau} \left\| \bm{\eta} \right\|_{L^2\left(D_{R_1}\right)}, \quad \left\| v\left(\mathbf{x}\right) \right\|_{L^2\left(D_{R_1}\right)} \leqslant \frac{c}{\tau} \left\| \bm{\eta} \right\|_{L^2\left(D_{R_1}\right)}.
	\end{align}
	
	We now derive the estimate about $ \nabla \mathbf{R}\left(\mathbf{x}\right) $ in $ L^2 $-norm. For $ \varphi \in L^2 \left(D_{R_1}\right) $, we set the operator $ \mathcal{T}_4 $ and $ \mathcal{T}_5 : L^2\left(D_{R_1}\right)\to L^2\left(D_{R_1}\right) $ as
	\begin{align*}
		&\mathcal{T}_4\left(\varphi\right)\left(\mathbf{x}\right) := - \nabla \bm{\zeta} \int_{D_{R_1}} \left(g_{\bm{\zeta}}\left(\mathbf{x} - \mathbf{y}\right) - e^{\left(\tilde{\bm{\zeta}}-\bm{\zeta}\right) \cdot \left(\mathbf{x} - \mathbf{y}\right)} g_{\tilde{\bm{\zeta}}}\left(\mathbf{x} - \mathbf{y}\right)\right) \varphi\left(\mathbf{y}\right) \mathrm{d} \mathbf{y}, \\
		&\mathcal{T}_5\left(\varphi\right)\left(\mathbf{x}\right) := - \nabla^2 \int_{D_{R_1}} \left(g_{\bm{\zeta}}\left(\mathbf{x} - \mathbf{y}\right) - e^{\left(\tilde{\bm{\zeta}}-\bm{\zeta}\right) \cdot \left(\mathbf{x} - \mathbf{y}\right)} g_{\tilde{\bm{\zeta}}}\left(\mathbf{x} - \mathbf{y}\right)\right) \varphi\left(\mathbf{y}\right) \mathrm{d} \mathbf{y}.
	\end{align*}
	Taking gradient at both sides of \eqref{eq_CGOs_Rv=S+T}, one has 
	\begin{align*}
		\nabla\mathbf{R} = -\frac{\omega^2}{\mu} \nabla\mathcal{P}\left(\left(1-\rho\right) \left(\bm{\eta}+\mathbf{R}\right)\right) + \left(\mathcal{T}_4 + \mathcal{T}_5\right) \left(\left(1-\rho\right) v - \nabla \rho \cdot \left(\bm{\eta}+\mathbf{R}\right)\right),
	\end{align*}
	Utilizing \eqref{est_CGOs_gradient_integral_gf_L^2}, it follows that 
	\begin{align*} 
		\left\| \nabla\mathcal{P}\left(\left(1-\rho\right) \left(\bm{\eta}+\mathbf{R}\right)\right) \right\|_{L^2\left(D_{R_1}\right)} \leqslant c \left\| \left(1-\rho\right) \left(\bm{\eta}+\mathbf{R}\right) \right\|_{L^2\left(D_{R_1}\right)} ,
	\end{align*}
	where $ \tau $ is sufficiently large and $ c $ is independent of $ \tau $. We split the integrand of $ \mathcal{T}_4\left(\varphi\right) $ into \eqref{eq_CGOs_split_g-eg}. An analogous reasoning as in \eqref{eq_CGOs_zeta_g-g_varphi} contributes to
	\begin{align} \label{eq_CGOs_gradzeta_g-g_varphi}
		\left\| \nabla \bm{\zeta} \int_{D_{R_1}} \left(g_{\bm{\zeta}}\left(\mathbf{x} - \mathbf{y}\right) - g_{\tilde{\bm{\zeta}}}\left(\mathbf{x} - \mathbf{y}\right)\right) \varphi \left(\mathbf{y}\right) \mathrm{d} \mathbf{y} \right\|^2_{L^2\left(D_{R_1}\right)} \leqslant c \left\| \varphi \right\|^2_{L^2\left(D_{R_1}\right)},
	\end{align}
	where $ \tau $ is large enough and $ c $ is a constant independent of $ \tau $. Applying \eqref{est_CGOs_gradient_integral_gf_L^2} and the similar strategy in \eqref{est_CGOs_zeta_int_g_1-e_varphi} and \eqref{est_CGOs_1-ezeta_int_gevarphi}, it can be calculated that
	\begin{align*} 
		& \left\| \nabla \bm{\zeta} \int_{D_{R_1}} g_{\tilde{\bm{\zeta}}}\left(\mathbf{x} - \mathbf{y}\right) \left(1 - e^{-\left(\tilde{\bm{\zeta}}-\bm{\zeta}\right) \cdot \mathbf{y}} \right) \varphi \left(\mathbf{y}\right) \mathrm{d} \mathbf{y} \right\|^2_{L^2\left(D_{R_1}\right)} \leqslant c \left\| \varphi \right\|^2_{L^2\left(D_{R_1}\right)} , \\
		& \left\| \left(1 - e^{\left(\tilde{\bm{\zeta}}-\bm{\zeta}\right) \cdot \mathbf{x}} \right) \nabla \bm{\zeta} \int_{D_{R_1}} g_{\tilde{\bm{\zeta}}}\left(\mathbf{x} - \mathbf{y}\right) e^{-\left(\tilde{\bm{\zeta}}-\bm{\zeta}\right) \cdot \mathbf{y}} \varphi \left(\mathbf{y}\right) \mathrm{d} \mathbf{y} \right\|^2_{L^2\left(D_{R_1}\right)} \leqslant c \left\| \varphi \right\|^2_{L^2\left(D_{R_1}\right)} .
	\end{align*}
	By virtue of \eqref{est_CGOs_integral_gf_L^2} and the analogous reasoning as above, one can show that
	\begin{align*} 
		\left\| \bm{\zeta} \left(\tilde{\bm{\zeta}}-\bm{\zeta}\right)^{\top} e^{\left(\tilde{\bm{\zeta}}-\bm{\zeta}\right) \cdot \mathbf{x}} \int_{D_{R_1}} g_{\tilde{\bm{\zeta}}}\left(\mathbf{x} - \mathbf{y}\right) e^{-\left(\tilde{\bm{\zeta}}-\bm{\zeta}\right) \cdot \mathbf{y}} \varphi \left(\mathbf{y}\right) \mathrm{d} \mathbf{y} \right\|^2_{L^2\left(D_{R_1}\right)} \leqslant \frac{c}{\tau^2} \left\| \varphi \right\|^2_{L^2\left(D_{R_1}\right)} .
	\end{align*}
	Then we deduce that 
	\begin{align*}
		\left\| \mathcal{T}_4 \left(\left(1-\rho\right) v - \nabla \rho \cdot \left(\bm{\eta}+\mathbf{R}\right)\right) \right\|_{L^2\left(D_{R_1}\right)} \leqslant c \left\| \left(1-\rho\right) v - \nabla \rho \cdot \left(\bm{\eta}+\mathbf{R}\right) \right\|_{L^2\left(D_{R_1}\right)} .
	\end{align*}	
	We estimate $ \mathcal{T}_5 $ by means of the analogous method. Split the integrand of $ \mathcal{T}_5 $ into \eqref{eq_CGOs_split_g-eg}. Notice that $ \left|\bm{\alpha}\right| \geqslant \left|\alpha_1\right| \geqslant \frac{\pi}{2R^{\prime}} $. From \eqref{ineq_CGOs__alpha^2_alpha^4/|alphaalpha-xialpha|^2}, we have
	\begin{align*} 
		\frac{\left| \bm{\alpha} \right|^3}{\left|\bm{\alpha} \cdot \bm{\alpha} - 2\mathrm{i} \bm{\xi} \cdot \bm{\alpha}\right|^2} \leqslant \frac{2R^{\prime}}{\pi}\frac{\left| \bm{\alpha} \right|^4}{\left|\bm{\alpha} \cdot \bm{\alpha} - 2\mathrm{i} \bm{\xi} \cdot \bm{\alpha}\right|^2} \leqslant c_{k,R^{\prime}} \tau^2.
	\end{align*}
	Similar to \eqref{eq_CGOs_gradzeta_g-g_varphi}, it follows that
	\begin{align*}
		\left\| \nabla^2 \int_{D_{R_1}} \left(g_{\bm{\zeta}}\left(\mathbf{x} - \mathbf{y}\right) - g_{\tilde{\bm{\zeta}}}\left(\mathbf{x} - \mathbf{y}\right)\right) \varphi \left(\mathbf{y}\right) \mathrm{d} \mathbf{y} \right\|^2_{L^2\left(D_{R_1}\right)} \leqslant c \left\| \varphi \right\|^2_{L^2\left(D_{R_1}\right)} .
	\end{align*}
	Using \eqref{est_CGOs_gradient_gradient_integral_gf_L^2}, \eqref{ineq_CGOs_sup_1-e} and the uniform boundedness of $ \left| e^{-\left(\tilde{\bm{\zeta}}-\bm{\zeta}\right) \cdot \mathbf{x}} \right| $, it arrives at 
	\begin{align*} 
		& \left\| \nabla^2 \int_{D_{R_1}} g_{\tilde{\bm{\zeta}}}\left(\mathbf{x} - \mathbf{y}\right) \left(1 - e^{-\left(\tilde{\bm{\zeta}}-\bm{\zeta}\right) \cdot \mathbf{y}} \right) \varphi \left(\mathbf{y}\right) \mathrm{d} \mathbf{y} \right\|^2_{L^2\left(D_{R_1}\right)} \leqslant c \left\| \varphi \right\|^2_{L^2\left(D_{R_1}\right)} . \\
		& \left\| \left(1 - e^{\left(\tilde{\bm{\zeta}}-\bm{\zeta}\right) \cdot \mathbf{x}} \right) \nabla^2 \int_{D_{R_1}} g_{\tilde{\bm{\zeta}}}\left(\mathbf{x} - \mathbf{y}\right) e^{-\left(\tilde{\bm{\zeta}}-\bm{\zeta}\right) \cdot \mathbf{y}} \varphi \left(\mathbf{y}\right) \mathrm{d} \mathbf{y} \right\|^2_{L^2\left(D_{R_1}\right)} \leqslant c \left\| \varphi \right\|^2_{L^2\left(D_{R_1}\right)} .
	\end{align*}
	By virtue of \eqref{est_CGOs_gradient_integral_gf_L^2} and \eqref{ineq_CGOs_tildexi-xi}, the analogous strategy in \eqref{est_CGOs_zeta_int_g_1-e_varphi} and \eqref{est_CGOs_1-ezeta_int_gevarphi} leads to
	\begin{align*} 
		& \left\| (\tilde{\bm{\zeta}}-\bm{\zeta}) (\tilde{\bm{\zeta}}-\bm{\zeta})^{\top} e^{\left(\tilde{\bm{\zeta}}-\bm{\zeta}\right) \cdot \mathbf{x}} \int_{D_{R_1}} g_{\tilde{\bm{\zeta}}}\left(\mathbf{x} - \mathbf{y}\right) e^{-\left(\tilde{\bm{\zeta}}-\bm{\zeta}\right) \cdot \mathbf{y}} \varphi \left(\mathbf{y}\right) \mathrm{d} \mathbf{y} \right\|^2_{L^2\left(D_{R_1}\right)} \notag \leqslant \frac{c}{\tau^6} \left\| \varphi \right\|^2_{L^2\left(D_{R_1}\right)}, \\
		& \left\| e^{\left(\tilde{\bm{\zeta}}-\bm{\zeta}\right) \cdot \mathbf{x}} \nabla \int_{D_{R_1}} g_{\tilde{\bm{\zeta}}}\left(\mathbf{x} - \mathbf{y}\right) e^{-\left(\tilde{\bm{\zeta}}-\bm{\zeta}\right) \cdot \mathbf{y}} \varphi \left(\mathbf{y}\right) \mathrm{d} \mathbf{y} \left(\tilde{\bm{\zeta}}-\bm{\zeta}\right)^{\top} \right\|^2_{L^2\left(D_{R_1}\right)} \leqslant \frac{c}{\tau^2} \left\| \varphi \right\|^2_{L^2\left(D_{R_1}\right)} .
	\end{align*}
	From the above estimates, it yields that 
	\begin{align} \label{est_CGOs_I6} 
		\left\| \mathcal{T}_5 \left(\left(1-\rho\right) v - \nabla \rho \cdot \left(\bm{\eta}+\mathbf{R}\right)\right) \right\|_{L^2\left(D_{R_1}\right)} \leqslant c \left\| \left(1-\rho\right) v - \nabla \rho \cdot \left(\bm{\eta}+\mathbf{R}\right) \right\|_{L^2\left(D_{R_1}\right)} .
	\end{align}
	With the help of \eqref{est_CGOs_R_v_L^2}, we finally get 
	\begin{align} \label{est_CGOs_gradientR_L^2}
		\left\| \nabla\mathbf{R}\left(\mathbf{x}\right) \right\|_{L^2\left(D_{R_1}\right)} \leqslant c \left\| \bm{\eta} \right\|_{L^2\left(D_{R_1}\right)}.
	\end{align}

	The estimate about $ \nabla v\left(\mathbf{x}\right) $ is demonstrated below. We define the operator $ \mathcal{T}_6 $ and $ \mathcal{T}_7 : L^2\left(D_{R_1}\right)\to L^2\left(D_{R_1}\right) $ as
	\begin{align*}
		&\mathcal{T}_6\left(\varphi\right)\left(\mathbf{x}\right) := - k_{\mathrm{p}}^2 \left(\tilde{\bm{\zeta}}-\bm{\zeta}\right) e^{\left(\tilde{\bm{\zeta}}-\bm{\zeta}\right) \cdot \mathbf{x}} \int_{D_R} g_{\tilde{\bm{\zeta}}}\left(\mathbf{x} - \mathbf{y}\right) e^{-\left(\tilde{\bm{\zeta}}-\bm{\zeta}\right) \cdot \mathbf{y}} \varphi\left(\mathbf{y}\right) \mathrm{d} \mathbf{y}, \\
		&\mathcal{T}_7\left(\varphi\right)\left(\mathbf{x}\right) := - k_{\mathrm{p}}^2 e^{\left(\tilde{\bm{\zeta}}-\bm{\zeta}\right) \cdot \mathbf{x}} \nabla \int_{D_R} g_{\tilde{\bm{\zeta}}}\left(\mathbf{x} - \mathbf{y}\right) e^{-\left(\tilde{\bm{\zeta}}-\bm{\zeta}\right) \cdot \mathbf{y}} \varphi\left(\mathbf{y}\right) \mathrm{d} \mathbf{y},
	\end{align*}
	where $ \varphi \in L^2 \left(D_{R_1}\right) $. By \eqref{eq_CGOs_Rv=S+T}, it is easy to know that 
	\begin{align*}
		\nabla v\left(\mathbf{x}\right) = \left(\mathcal{T}_6 + \mathcal{T}_7\right) \left(\left(1-\rho\right) v - \nabla \rho \cdot \left(\bm{\eta}+\mathbf{R}\right)\right).
	\end{align*}
	Using the uniform boundedness of $ \left| e^{-\left(\tilde{\bm{\zeta}}-\bm{\zeta}\right) \cdot \mathbf{x}} \right| $ in $ \overline{D_{R_1}} $, \eqref{ineq_CGOs_tildexi-xi} and \eqref{est_CGOs_integral_gf_L^2}, we have
	\begin{align*} 
		\left\| \mathcal{T}_6 \left(\left(1-\rho\right) v - \nabla \rho \cdot \left(\bm{\eta}+\mathbf{R}\right)\right) \right\|_{L^2\left(D_{R_1}\right)} \leqslant \frac{c}{\tau^2} \left\| \left(1-\rho\right) v - \nabla \rho \cdot \left(\bm{\eta}+\mathbf{R}\right) \right\|_{L^2\left(D_{R_1}\right)} .
	\end{align*}
	Similarly, it arrives at
	\begin{align*} 
		\left\| \mathcal{T}_7 \left(\left(1-\rho\right) v - \nabla \rho \cdot \left(\bm{\eta}+\mathbf{R}\right)\right) \right\|_{L^2\left(D_{R_1}\right)} \leqslant c \left\| \left(1-\rho\right) v - \nabla \rho \cdot \left(\bm{\eta}+\mathbf{R}\right) \right\|_{L^2\left(D_{R_1}\right)} .
	\end{align*}
	By virtue of \eqref{est_CGOs_R_v_L^2}, we obtain 
	\begin{align} \label{est_CGOs_gradientv_L^2}
		\left\| \nabla v\left(\mathbf{x}\right) \right\|_{L^2\left(D_{R_1}\right)} \leqslant c \left\| \bm{\eta} \right\|_{L^2\left(D_{R_1}\right)}.
	\end{align}
	
	Now we consider the estimate about $ \nabla^2 \mathbf{R}\left(\mathbf{x}\right) $. For $ \varphi \in L^2 \left(D_{R_1}\right) $, let the operator $ \mathcal{T}_8 $ and $ \mathcal{T}_9 : L^2\left(D_{R_1}\right)\to L^2\left(D_{R_1}\right) $ be defined as
	\begin{align*}
		&\mathcal{T}_8\left(\varphi\right)\left(\mathbf{x}\right) := - \nabla^2 \bm{\zeta} \int_{D_{R_1}} \left(g_{\bm{\zeta}}\left(\mathbf{x} - \mathbf{y}\right) - e^{\left(\tilde{\bm{\zeta}}-\bm{\zeta}\right) \cdot \left(\mathbf{x} - \mathbf{y}\right)} g_{\tilde{\bm{\zeta}}}\left(\mathbf{x} - \mathbf{y}\right)\right) \varphi\left(\mathbf{y}\right) \mathrm{d} \mathbf{y}, \\
		&\mathcal{T}_9\left(\varphi\right)\left(\mathbf{x}\right) := - \nabla^3 \int_{D_{R_1}} \left(g_{\bm{\zeta}}\left(\mathbf{x} - \mathbf{y}\right) - e^{\left(\tilde{\bm{\zeta}}-\bm{\zeta}\right) \cdot \left(\mathbf{x} - \mathbf{y}\right)} g_{\tilde{\bm{\zeta}}}\left(\mathbf{x} - \mathbf{y}\right)\right) \varphi\left(\mathbf{y}\right) \mathrm{d} \mathbf{y}.
	\end{align*}
	From \eqref{eq_CGOs_Rv=S+T}, we know that 
	\begin{align*}
		\nabla^2\mathbf{R} = -\frac{\omega^2}{\mu} \nabla^2\mathcal{P}\left(\left(1-\rho\right) \left(\bm{\eta}+\mathbf{R}\right)\right) + \left(\mathcal{T}_8 + \mathcal{T}_9\right) \left(\left(1-\rho\right) v - \nabla \rho \cdot \left(\bm{\eta}+\mathbf{R}\right)\right),
	\end{align*}
	Using \eqref{est_CGOs_gradient_gradient_integral_gf_L^2}, one has
	\begin{align*} 
		\left\| \nabla^2\mathcal{P}\left(\left(1-\rho\right) \left(\bm{\eta}+\mathbf{R}\right)\right) \right\|_{L^2\left(D_{R_1}\right)} \leqslant c \tau \left\| \left(1-\rho\right) \left(\bm{\eta}+\mathbf{R}\right) \right\|_{L^2\left(D_{R_1}\right)} .
	\end{align*}
	Similar to the proof of \eqref{est_CGOs_I6}, with the help of \eqref{est_CGOs_gradient_gradient_integral_gf_L^2}, we readily derive that
	\begin{align*} 
		\left\| \mathcal{T}_8 \left(\left(1-\rho\right) v - \nabla \rho \cdot \left(\bm{\eta}+\mathbf{R}\right)\right) \right\|_{L^2\left(D_{R_1}\right)} &\leqslant c \tau \left\| \mathcal{T}_5 \left(\left(1-\rho\right) v - \nabla \rho \cdot \left(\bm{\eta}+\mathbf{R}\right)\right) \right\|_{L^2\left(D_{R_1}\right)} \\
		&\leqslant c \tau \left\| \left(1-\rho\right) v - \nabla \rho \cdot \left(\bm{\eta}+\mathbf{R}\right) \right\|_{L^2\left(D_{R_1}\right)} .
	\end{align*}
	Regarding to $ \mathcal{T}_9\left(\varphi\right) $, for all $ \varphi \in C^{\infty}_0 \left(D_{R_1}\right) $, using the formula for integration by parts of a double integral, we have
	\begin{align*} 
		\mathcal{T}_9\left(\varphi\right) \left(\mathbf{x}\right) = \nabla^2 \int_{D_{R_1}} \left(g_{\bm{\zeta}}\left(\mathbf{x} - \mathbf{y}\right) - e^{\left(\tilde{\bm{\zeta}}-\bm{\zeta}\right) \cdot \left(\mathbf{x} - \mathbf{y}\right)} g_{\tilde{\bm{\zeta}}}\left(\mathbf{x} - \mathbf{y}\right)\right) \nabla \varphi \left(\mathbf{y}\right) \mathrm{d} \mathbf{y}.
	\end{align*}
	From \eqref{est_CGOs_I6}, it follows that $ \left\| \mathcal{T}_9\left(\varphi\right) \right\|_{L^2\left(D_{R_1}\right)} \leqslant c \left\| \nabla \varphi \right\|_{L^2\left(D_{R_1}\right)} $, which contributes to
	\begin{align*} 
		\left\| \mathcal{T}_9 \left(\left(1-\rho\right) v - \nabla \rho \cdot \left(\bm{\eta}+\mathbf{R}\right)\right) \right\|_{L^2\left(D_{R_1}\right)} \leqslant c \left\| \nabla \left(\left(1-\rho\right) v - \nabla \rho \cdot \left(\bm{\eta}+\mathbf{R}\right)\right) \right\|_{L^2\left(D_{R_1}\right)} .
	\end{align*}
	By virtual of \eqref{est_CGOs_R_v_L^2}, \eqref{est_CGOs_gradientR_L^2} and \eqref{est_CGOs_gradientv_L^2}, we finally obtain 
	\begin{align*}
		\left\| \nabla^2\mathbf{R}\left(\mathbf{x}\right) \right\|_{L^2\left(D_{R_1}\right)} \leqslant c\tau \left\| \bm{\eta} \right\|_{L^2\left(D_{R_1}\right)}.
	\end{align*}	

	The proof is complete.
\end{proof}

\section*{Acknowledgments}
%We would like to express our sincere gratitude to the anonymous referee for his/her constructive comments and valuable suggestions, which have significantly improved the presentation of this paper. 

The work of H. Diao is supported by National Natural Science Foundation of China (No. 12371422) and the Fundamental Research Funds for the Central Universities, JLU.

%\bibliographystyle{plain}
%\bibliography{citation}

\begin{thebibliography}{99}
	
	\bibitem{alves2014identification}
	C.~J.~S. Alves, N.~F.~M. Martins, and N.~C. Roberty.
	\newblock Identification and reconstruction of elastic body forces.
	\newblock {\em Inverse Problems}, 30(5):055015, 2014.
	
	\bibitem{ammari2015mathematical}
	H.~Ammari, E.~Bretin, J.~Garnier, H.~Kang, H.~Lee, and A.~Wahab.
	\newblock {\em Mathematical methods in elasticity imaging}.
	\newblock Princeton University Press, New Jersey, 2015.
	
	\bibitem{ammari2013time}
	H.~Ammari, E.~Bretin, J.~Garnier, and A.~Wahab.
	\newblock Time-reversal algorithms in viscoelastic media.
	\newblock {\em European Journal of Applied Mathematics}, 24(4):565--600, 2013.
	
	\bibitem{arridge1999optical}
	S.~R. Arridge.
	\newblock Optical tomography in medical imaging.
	\newblock {\em Inverse problems}, 15(2):R41, 1999.
	
	\bibitem{bao2020stability}
	G.~Bao, P.~Li, and Y.~Zhao.
	\newblock Stability for the inverse source problems in elastic and
	electromagnetic waves.
	\newblock {\em Journal de Math{\'e}matiques Pures et Appliqu{\'e}es},
	134:122--178, 2020.
	
	\bibitem{barcelo2018uniqueness}
	J.~A. Barcel{\'o}, M.~Folch-Gabayet, S.~P{\'e}rez-Esteva, A.~Ruiz, and M.~C.
	Vilela.
	\newblock Uniqueness for inverse elastic medium problems.
	\newblock {\em SIAM Journal on Mathematical Analysis}, 50(4):3939--3962, 2018.
	
	\bibitem{blaasten2018nonradiating}
	E.~Bl{\aa}sten.
	\newblock Nonradiating sources and transmission eigenfunctions vanish at
	corners and edges.
	\newblock {\em SIAM Journal on Mathematical Analysis}, 50(6):6255--6270, 2018.
	
	\bibitem{blaasten2018radiating}
	E.~Bl{\aa}sten and Y.~H. Lin.
	\newblock Radiating and non-radiating sources in elasticity.
	\newblock {\em Inverse Problems}, 35(1):015005, 2018.
	
	\bibitem{blaasten2021electromagnetic}
	E.~Bl{\aa}sten, H.~Liu, and J.~Xiao.
	\newblock On an electromagnetic problem in a corner and its applications.
	\newblock {\em Analysis $\&$ pde}, 14(7):2207--2224, 2021.
	
	\bibitem{blaasten2014corners}
	E.~Bl{\aa}sten, L.~P{\"a}iv{\"a}rinta, and J.~Sylvester.
	\newblock Corners always scatter.
	\newblock {\em Communications in Mathematical Physics}, 331(2):725--753, 2014.
	
	\bibitem{blaasten2021scattering}
	E.~L.~K. Bl{\aa}sten and H.~Liu.
	\newblock Scattering by curvatures, radiationless sources, transmission
	eigenfunctions, and inverse scattering problems.
	\newblock {\em SIAM Journal on Mathematical Analysis}, 53(4):3801--3837, 2021.
	
	\bibitem{bleistein1977nonuniqueness}
	N.~Bleistein and J.~K. Cohen.
	\newblock Nonuniqueness in the inverse source problem in acoustics and
	electromagnetics.
	\newblock {\em Journal of Mathematical Physics}, 18(2):194--201, 1977.
	
	\bibitem{cakoni2023singularities}
	F.~Cakoni and M.~S. Vogelius.
	\newblock Singularities almost always scatter: Regularity results for
	non-scattering inhomogeneities.
	\newblock {\em Communications on Pure and Applied Mathematics},
	76(12):4022--4047, 2023.
	
	\bibitem{cakoni2023regularity}
	F.~Cakoni, M.~S. Vogelius, and J.~Xiao.
	\newblock On the regularity of non-scattering anisotropic inhomogeneities.
	\newblock {\em Archive for Rational Mechanics and Analysis}, 247(3):31, 2023.
	
	\bibitem{cakoni2020corner}
	F.~Cakoni and J.~Xiao.
	\newblock On corner scattering for operators of divergence form and
	applications to inverse scattering.
	\newblock {\em Communications in partial differential equations},
	46(3):413--441, 2020.
	
	\bibitem{colton2018looking}
	D.~Colton and R.~Kress.
	\newblock Looking back on inverse scattering theory.
	\newblock {\em SIAM Review}, 60(4):779--807, 2018.
	
	\bibitem{colton2019inverse}
	D.~Colton and R.~Kress.
	\newblock {\em Inverse Acoustic and Electromagnetic Scattering Theory},
	volume~93 of {\em Applied Mathematical Sciences}.
	\newblock Springer, New York, 4th edition, 2019.
	
	\bibitem{DFLW2025}
	H.~Diao, X.~Fei, H.~Liu, and L.~Wang.
	\newblock Determining anomalies in a semilinear elliptic equation by a minimal
	number of measurements.
	\newblock {\em Inverse Problems}, 41:055004, 2025.
	
	\bibitem{DFLY24}
	H.~Diao, X.~Fei, H.~Liu, and K.~Yang.
	\newblock Visibility, invisibility and unique recovery of inverse
	electromagnetic problems with conical singularities.
	\newblock {\em Inverse Problems and Imaging}, 18(3):541--570, 2024.
	
	\bibitem{DLMW20251}
	H.~Diao, H.~Liu, Q.~Meng, and L.~Wang.
	\newblock Effective medium theory for embedded obstacles in electromagnetic
	scattering with applications.
	\newblock {\em Journal of Differential Equations}, 437:113283, 2025.
	
	\bibitem{DLMW20252}
	H.~Diao, H.~Liu, Q.~Meng, and L.~Wang.
	\newblock On a coupled-physics transmission eigenvalue problem and its spectral
	properties with applications.
	\newblock {\em Journal of Differential Equations}, 441:113508, 2025.
	
	\bibitem{diao2022further}
	H.~Diao, H.~Liu, and L.~Wang.
	\newblock Further results on generalized holmgren's principle to the lam{\'e}
	operator and applications.
	\newblock {\em Journal of Differential Equations}, 309:841--882, 2022.
	
	\bibitem{DTLT24}
	H.~Diao, R.~Tang, H.~Liu, and J.~Tang.
	\newblock Unique determination by a single far-field measurement for an inverse
	elastic problem.
	\newblock {\em Inverse Problems and Imaging}, 18(4):1405--1430, 2024.
	
	\bibitem{elschner2018acoustic}
	J.~Elschner and G.~Hu.
	\newblock Acoustic scattering from corners, edges and circular cones.
	\newblock {\em Archive for Rational Mechanics and Analysis}, 228:653--690,
	2018.
	
	\bibitem{hahner1998acoustic}
	P.~H{\"a}hner.
	\newblock {\em On acoustic, electromagnetic, and elastic scattering problems in
		inhomogeneous media}.
	\newblock Habilitation thesis, Universit{\"a}t G{\"o}ttingen, Mathematisches
	Institut, 1998.
	
	\bibitem{hahner2002uniqueness}
	P.~H{\"a}hner.
	\newblock On uniqueness for an inverse problem in inhomogeneous elasticity.
	\newblock {\em IMA Journal of Applied Mathematics}, 67(2):127--143, 2002.
	
	\bibitem{hashmi2020optical}
	U.~S. Hashmi and R.~Muzzammel.
	\newblock Optical tomography in medical imaging and diagnostic engineering.
	\newblock {\em International Journal of Scientific $\&$ Engineering Research},
	11:223--233, 2020.
	
	\bibitem{hu2020inverse}
	G.~Hu and J.~Li.
	\newblock Inverse source problems in an inhomogeneous medium with a single
	far-field pattern.
	\newblock {\em SIAM Journal on Mathematical Analysis}, 52(5):5213--5231, 2020.
	
	\bibitem{kow2024scattering}
	P.~Z. Kow, M.~Salo, and H.~Shahgholian.
	\newblock On scattering behavior of corner domains with anisotropic
	inhomogeneities.
	\newblock {\em SIAM Journal on Mathematical Analysis}, 56(4):4834--4853, 2024.
	
	\bibitem{kow2021characterization}
	P.~Z. Kow and J.~N. Wang.
	\newblock On the characterization of nonradiating sources for the elastic waves
	in anisotropic inhomogeneous media.
	\newblock {\em SIAM Journal on Applied Mathematics}, 81(4):1530--1551, 2021.
	
	\bibitem{li2024nonradiating}
	P.~Li and J.~Wang.
	\newblock Nonradiating sources of maxwell's equations.
	\newblock {\em arXiv preprint arXiv:2402.10407}, 2024.
	
	\bibitem{paivarinta2017strictly}
	L.~P{\"a}iv{\"a}rinta, M.~Salo, and E.~V. Vesalainen.
	\newblock Strictly convex corners scatter.
	\newblock {\em Revista matem{\'a}tica iberoamericana}, 33(4):1369--1396, 2017.
	
	\bibitem{reddy2006theory}
	J.~N. Reddy.
	\newblock {\em Theory and analysis of elastic plates and shells}.
	\newblock CRC press, 2006.
	
	\bibitem{salo2021free}
	M.~Salo and H.~Shahgholian.
	\newblock Free boundary methods and non-scattering phenomena.
	\newblock {\em Research in the Mathematical Sciences}, 8(4):58, 2021.
	
	\bibitem{song2011full}
	F.~Song and M.~N. Toks{\"o}z.
	\newblock Full-waveform based complete moment tensor inversion and source
	parameter estimation from downhole microseismic data for hydrofracture
	monitoring.
	\newblock {\em Geophysics}, 76(6):WC103--WC116, 2011.
	
	\bibitem{virieux2009overview}
	J.~Virieux and S.~Operto.
	\newblock An overview of full-waveform inversion in exploration geophysics.
	\newblock {\em Geophysics}, 74(6):WCC1--WCC26, 2009.
	
	\bibitem{yuan2023increasing}
	G.~Yuan and Y.~Zhao.
	\newblock Increasing stability for the inverse source problem in elastic waves
	with attenuation.
	\newblock {\em European Journal of Applied Mathematics}, 34(4):896--910, 2023.
	
	\bibitem{zhai2023determination}
	J.~Zhai and Y.~Zhao.
	\newblock Determination of piecewise homogeneous sources for elastic and
	electromagnetic waves.
	\newblock {\em Inverse Problems $\&$ Imaging}, 17(3), 2023.
	
\end{thebibliography}

\end{document}